\documentclass[11pt, a4paper]{amsart}
\usepackage{amsmath,amssymb,amsfonts, float}
\usepackage{thmtools}

\usepackage[all]{xy}
\usepackage{caption}
\usepackage{enumerate}
\usepackage{mathpazo}
\usepackage{a4wide}
\usepackage{diagbox}

\usepackage{bm}
\usepackage{graphicx}
\usepackage[breaklinks=true]{hyperref}

\usepackage{xcolor}
\usepackage{multicol}
\usepackage{tabularx}

\usepackage{hyperref}
\usepackage[capitalize]{cleveref}

\usepackage[normalem]{ulem}

\newtheorem{thm}{Theorem}[section] 
\newtheorem*{thm*}{Theorem} 
\newtheorem{prop}[thm]{Proposition}
\newtheorem{lem}[thm]{Lemma}
\newtheorem{cor}[thm]{Corollary}

\theoremstyle{definition}
\newtheorem{definition}[thm]{Definition}

\newtheorem{question}[thm]{Question}
\newtheorem{rem}[thm]{Remark}

\DeclareMathOperator{\C}{\mathbb{C}}
\DeclareMathOperator{\Z}{\mathbb{Z}}

\DeclareMathOperator{\F}{\mathbb{F}}

\DeclareMathOperator{\Gal}{\text{Gal}}

    \DeclareFontFamily{U}{wncy}{}
    \DeclareFontShape{U}{wncy}{m}{n}{<->wncyr10}{}
    \DeclareSymbolFont{mcy}{U}{wncy}{m}{n}
    \DeclareMathSymbol{\Sha}{\mathord}{mcy}{"58}

\numberwithin{equation}{section}

\DeclareSymbolFont{bbold}{U}{bbold}{m}{n}
\DeclareSymbolFontAlphabet{\mathbbold}{bbold}

\usepackage{hyperref}

\newcommand{\Rad}{{\rm Rad}}
\newcommand{\s}{\text{ss}}

\begin{document}
\sloppy

\title{Sums of units in finite rings \\ and applications to Cayley graphs }
 \author{ J\'an Min\'a\v{c}, Tung T. Nguyen, Nguy$\tilde{\text{\^{e}}}$n Duy T\^{a}n }
 \dedicatory{Dedicated to Professor T. Tamizh Chelvam}
\address{Department of Mathematics, Western University, London, Ontario, Canada N6A 5B7}
\email{minac@uwo.ca}
\date{\today}

 \address{Department of Mathematics, Elmhurst University, Elmhurst, Illinois, USA}
 \email{tung.nguyen@elmhurst.edu}
 
  \address{
Faculty of Mathematics and 	Informatics, Hanoi University of Science and Technology, 1 Dai Co Viet Road, Hanoi, Vietnam } 
\email{tan.nguyenduy@hust.edu.vn}

\thanks{JM is partially supported by the Natural Sciences and Engineering Research Council of Canada (NSERC) grant R0370A01. TTN is partially supported by an AMS-Simons Travel Grant. NDT is partially supported by the Vietnam National
Foundation for Science and Technology Development (NAFOSTED) under grant number 101.04-2023.21}
\keywords{Finite rings, sums of units, unit groups, Cayley graphs, gcd-graphs, normalized units, group rings, finite fields, perfect state transfer.}
\subjclass[2020]{Primary 16P10, 05C50, 16U60, 16S34}

\begin{abstract}
The question of whether a ring is additively generated by its units has been studied from several perspectives in ring theory and algebraic graph theory. In this paper, we investigate this problem for finite rings, not necessarily commutative, and relate it to
the connectedness of gcd-graphs, the existence of perfect state transfer, and the
solvability of certain equations over finite fields. Additionally, we discuss a generalization of this question in which only certain normalized units are allowed in the generating set. Our work intersects algebra, number theory, and graph theory, and may be of interest to a broad audience.

\end{abstract}

\maketitle
\tableofcontents 
\section{Introduction}
The question of which rings are additively generated by their units has a long and rich history, attracting mathematicians working in algebra, number theory, and graph theory. Inspired by the work of Zelinski in \cite{zelinsky1954every} on the endomorphism ring of vector spaces, Skornyakov asked in \cite[p.167, Problem 31]{Skornyakov1964} for a classification of rings that are additively generated by units. (The main theorem of Zelinski in \cite{zelinsky1954every}, which states that every matrix is a sum of two invertible matrices, was later reproved in \cite{invertible_matrices} by Lord, who was not aware of Zelinski's work.) More precisely, Skornyakov asked whether every element of a von Neumann regular ring that does not have $\F_2$ as a quotient is a sum of units. Shortly after, Bergman gave a counterexample to this question, which led to a series of works on this topic (see \cite{Ehrlich1968, Raphael1974}). Some further refinements of Skornyakov's question are investigated in \cite{goldsmith1998unit, maimani2010rings}. While \cite{goldsmith1998unit} takes a purely algebraic approach, \cite{maimani2010rings} looks at it through the lens of graph theory. More precisely, the authors of \cite{maimani2010rings} realize that a finite commutative ring $R$ is additively generated by units if and only if the associated unitary Cayley graph $G_{R}$ is connected. We recall that $G_{R}$ is the graph whose vertex set is $R$ and two vertices $a,b$ are adjacent if and only if $a-b$ is a unit. In the same spirit, in \cite{podesta2021_finitefield, podesta2025GP, podesta_new} the Waring problem on whether a ring is additively generated by the set of $k$-th powers of units has also been approached via the so-called generalized Paley graphs.

Our interest in this question stems from previous work on gcd-graphs (see \cite{klotz2007some, nguyen2024certain, nguyengcd2026}), $U$-unitary Cayley graphs (see \cite{nguyen2025supercharacters}), and the existence of perfect state transfer (PST) on them (see \cite{bavsic2009perfect, nguyen2025perfect, pst-gcd-new}). In these works, the sums of units play a crucial role in understanding the connectedness of these graphs, the explicit calculation of certain arithmetical sums, as well as whether PST can exist between two nodes. In particular, we affirmatively answer Skornyakov's question when $R$ is a finite ring, which is not necessarily commutative (the case where the ring is commutative is settled in \cite{maimani2010rings}).

In this article, we study some questions in the spirit of Skornyakov's original problem, but using other subsets $S$ of $R$. Typically, such an $S$ has an arithmetic origin. Here, the relevant graph that connects algebra and graph theory is the Cayley graph $\Gamma(R,S)$ whose vertex set is $R$ and two vertices $a,b$ are adjacent if $a-b \in S.$ As observed in \cite{maimani2010rings}, this connection comes from the fact that $R$ is additively generated by $S$ if and only if $\Gamma(R,S)$ is connected. In the special case where $S$ is stable under the left and right actions of $R^{\times}$, the Cayley graph $\Gamma(R,S)$ is an example of a gcd-graph, which is widely studied in the literature (see \cite{bavsic2009perfect, klotz2007some, nguyengcd2026, saxena2007parameters, pst-gcd-new}). In this particular case, our result presented here completely resolves the question of when $S$ additively generates $R$ using the geometry of $\Gamma(R,S)$. We also discuss some relationship between gcd-graphs and the total graphs introduced by Anderson and Badawi in \cite{anderson2008total}.

\subsection{Outline.}
In \cref{sec:sum_units}, we revisit the question of when a ring has the property that every element is a sum of two units. Here, we describe several interpretations of this property. We also study the inheritance of the $2$-sum property under ring homomorphisms and ring extensions. Furthermore, we consider a natural generalization of Skornyakov's original problem; namely, we classify all elements in a finite ring that can be expressed as the sum of two units. We apply this result to compute the precise value of the sum of all units in a finite ring. Additionally, we use it to give a complete classification of connected gcd-graphs. In \cref{sec:normalized_units}, we introduce a generalization of Skornyakov's problem in which only certain normalized units are allowed in the generating set. We focus our discussion on three standard classes of rings: matrix rings, group rings, and finite fields. In all these cases, there is a natural notion of normalized units. While the study of these rings is encompassed by the framework we have laid out, the proofs in each case are quite different in nature. Moreover, in the case of finite fields, there are edge cases that require an entirely different approach. It is quite remarkable that there is a \textit{complete} answer in this case for the $n$-sum property. In fact we provide a sharp answer by finding the minimal value of $n$ for each finite nontrivial extension of finite fields.
Finally, in \cref{sec:pst}, we use sums of units to study the \textit{non-existence} of perfect state transfer (PST) on Cayley graphs. In particular, we show that there is no PST on gcd-graphs defined over a ring that has the $n$-sum property.

\section{Sums of units} \label{sec:sum_units}
Let $R$ be a finite unital ring such that $0 \neq 1.$ In this article, we follow the convention that a ring homomorphism $\Phi: R \to R'$ would preserve $1$; namely, $\Phi(1)=1.$ We also say that $R'$ is a quotient of $R$ if there is a surjective ring homomorphism $\Phi: R \to R'.$ We will write $\F_q$ for the finite field with $q$ elements. 

\subsection{Equivalent conditions for rings with the $2$-sum property}

Let  $n$ be a positive integer.  In \cite{goldsmith1998unit}, the authors define the notion of a ring with the $n$-sum property. Namely, a ring $R$ is said to have the $n$-sum property if and only if every element in $R$ can be written as the sum of exactly $n$ elements in $R^{\times}.$  In \cite{unitary, goldsmith1998unit, maimani2010rings, nguyengcd2026}, the authors show that for a ring $R$, if $\F_2$ is not a quotient of $R$, then $R$ has the $2$-sum property.  Furthermore, using a graph-theoretic argument, \cite{maimani2010rings} (for finite commutative rings) and \cite{nguyen2026supercharacters} (for any finite ring) show that $R$ is additively generated by its units if and only if $R$ does not have a quotient of the form $\F_2 \times \F_2.$ We discuss below some reformulations and applications of these results. The first treats several equivalent conditions for $R$ to have the 2-sum property, covering its unit group, quotient rings, certain special units, and the unitary Cayley graph defined on $R$. To do so, we recall that in \cite{nagell1970type}, Nagel introduces the notion of an exceptional unit; namely, a unit $u$ is called exceptional if $1-u$ is also a unit.  We are now ready to state our theorem. 
\begin{thm} \label{thm:main-conditions}
The following conditions are equivalent 
\begin{enumerate}
\item $R$ has no quotient of the form $\F_2.$
    \item $R$ has the $2$-sum property. 
    \item $R$ has at least one exceptional unit. 
    \item The unitary Cayley graph $G_{R}$ is not bipartite. 
\end{enumerate}
\end{thm}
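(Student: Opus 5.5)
We prove the chain (2)$\Rightarrow$(3)$\Rightarrow$(4)$\Rightarrow$(1)$\Rightarrow$(2), quoting the known fact recalled above (from \cite{unitary, goldsmith1998unit, maimani2010rings, nguyengcd2026}) that a ring without $\F_2$ as a quotient has the $2$-sum property. That fact gives (1)$\Rightarrow$(2) directly. The remaining implications are elementary, apart from the need to handle noncommutative quotients carefully.

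\emph{(2)$\Rightarrow$(3).} Write $1=u+v$ with $u,v\in R^{\times}$. Then $1-u=v$ is a unit, so $u$ is an exceptional unit.

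\emph{(3)$\Rightarrow$(4).} Let $u$ be exceptional. The vertices $0$, $1$, $u$ form a triangle in $G_R$, since the differences $1-0=1$, $u-0=u$ and $1-u$ are all units. A triangle is an odd cycle, so $G_R$ is not bipartite. We need $0,1,u$ to be distinct: $u\neq 0$ because $u$ is a unit, and $u\neq 1$ because $1-u$ is a unit and $0\neq 1$.

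\emph{(4)$\Rightarrow$(1).} We prove the contrapositive. Suppose $\Phi:R\to\F_2$ is a surjective ring homomorphism. Every unit $x$ maps to a unit of $\F_2$, so $\Phi(x)=1$. Partition $R$ into $\Phi^{-1}(0)$ and $\Phi^{-1}(1)$. If $a,b$ are adjacent, then $a-b$ is a unit, so $\Phi(a)-\Phi(b)=1$, which means $a$ and $b$ lie in different parts. Hence $G_R$ is bipartite; both parts are nonempty because $\Phi$ is surjective.

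The only substantial step is (1)$\Rightarrow$(2). If we could not simply cite it, we would argue as follows. Since $R$ is finite, $J=\Rad(R)$ is nilpotent and units lift modulo $J$, so it suffices to treat $R/J$. By Wedderburn--Artin, $R/J\cong\prod M_{n_i}(\F_{q_i})$. Hypothesis (1) rules out any factor $M_1(\F_2)$. For a field with $q>2$, write $a=(a-c)+c$ with $c\neq 0,a$. For $M_n(\F_q)$ with $n\geq 2$, use Zelinsky's theorem \cite{zelinsky1954every} that every matrix is a sum of two invertible ones. The main obstacle is this reduction, in particular checking that quotients of $R$ onto $\F_2$ correspond exactly to simple factors of $R/J$ isomorphic to $\F_2$. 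This holds because any homomorphism onto a field kills $J$ and then factors through a unique simple component of $R/J$.
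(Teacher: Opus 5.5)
Your proof is correct and is essentially the paper's: you use the same cycle of implications, the same bipartition $\ker\Phi \sqcup (1+\ker\Phi)$ for (4)$\Rightarrow$(1), and the same triangle up to translation by $1$ (the paper uses $\{1,u,1+u\}$, and your explicit check that the three vertices are distinct is a small improvement). For (1)$\Rightarrow$(2), the paper does not just cite the literature, some of which covers only commutative rings; it carries out exactly your fallback reduction to $R/\Rad(R)$, Artin--Wedderburn, and the theorem that every matrix is a sum of two invertible ones, so you should treat that sketch as the actual proof rather than a backup.
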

\begin{proof}
We will show that $(1) \implies (2) \implies (3) \implies (4) \implies (1).$

    Let us first prove that $(1) \implies (2).$ Let $R^{\s} =R/\Rad(R)$ be the semisimplification of $R$ with $\Rad(R)$ being the Jacobson radical of $R.$ Since an element in $R$ is a unit if and only if its image in $R^{\s}$ is a unit, $R$ has the $2$-sum property if and only if $R^{\s}$ does (see \cite[Proposition 3.40]{chudnovsky2024prime} and \cite[Proposition 1.3]{goldsmith1998unit}). By the Artin-Wedderburn theorem, $R^{\s}$ is isomorphic to a direct product of the form $\prod_{i=1}^r M_{n_i}(F_i)$ where $n_i \geq 1$ and $F_i$ is a finite field. By our assumption, none of these factors is $\F_2.$ By \cite{invertible_matrices}, each of these factors has the $2$-sum property. Consequently, $R^{\s}$ and hence $R$ has the $2$-sum property.

Let us now prove $(2) \implies (3).$ Since $R$ has the $2$-sum property, we can write $1=u+v$ where $u, v \in R^{\times}.$ By definition, $u$ is an exceptional unit in $R.$

Next, let us show that $(3) \implies (4).$ Suppose that $u$ is an exceptional unit. Then $\{1, u, 1+u \}$ forms a triangle in $G_{R}.$ Therefore, $G_{R}$ cannot be bipartite. 

Finally, let us show that $(4) \implies (1).$ Suppose to the contrary that $R$ has a quotient of the form $\F_2.$ Let $\Phi\colon R \to \F_2$ be such a quotient map. Let $U = \ker(\Phi)$ and $V= 1 +\ker(\Phi).$ Then if $x, y$ both belong to either $U$ or $V$ then $\Phi(x-y)=0.$ As a result, $x-y \not \in R^{\times}.$ By definition, $U, V$ form a partition of $G_{R}$ with the bipartite property. We conclude that $\Phi$ cannot exist.

\end{proof}

\begin{cor} \label{cor:2-property-quotient}
    Suppose that there is a ring map $\Phi\colon R \to  R'$. If $R$ has the $2$-sum property, then so does $R'.$ 
\end{cor}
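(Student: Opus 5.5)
Two routes suggest themselves, one through the equivalence $(1)\iff(2)$ of \cref{thm:main-conditions} and one by pushing a decomposition forward along $\Phi$. The direct route is the cleanest, so I will argue directly and keep the quotient characterization as a cross-check.

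The key observation is that a unital ring map sends units to units. If $u\in R^{\times}$ with inverse $u^{-1}$, then
\[
\Phi(u)\Phi(u^{-1})=\Phi(1)=1=\Phi(u^{-1})\Phi(u),
\]
using the paper's convention that ring homomorphisms preserve $1$. Hence $\Phi(R^{\times})\subseteq R'^{\times}$.

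\textbf{First attempt (fails).} One could take $y=\Phi(x)\in R'$ and write $x=u+v$ with $u,v\in R^{\times}$, so that $y=\Phi(u)+\Phi(v)$ is a sum of two units. This only covers elements in the image of $\Phi$, and $\Phi$ need not be surjective. So the direct argument must be replaced.

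\textbf{Working route.} I use condition $(3)$ of \cref{thm:main-conditions}. Since $R$ has the $2$-sum property, it has an exceptional unit $u$, meaning both $u$ and $1-u$ are units. Then $\Phi(u)$ and $\Phi(1-u)=1-\Phi(u)$ are units of $R'$, so $\Phi(u)$ is an exceptional unit of $R'$. By $(3)\implies(2)$ in \cref{thm:main-conditions}, $R'$ has the $2$-sum property.

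\textbf{Cross-check via quotients.} Suppose instead $R'$ had a quotient map $\psi\colon R'\to\F_2$. Then $\psi\circ\Phi\colon R\to\F_2$ is a unital ring map. Its image is a subring containing $1$, hence all of $\F_2$, so $\psi\circ\Phi$ is surjective. This contradicts $(1)$ for $R$, which gives the same conclusion.

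The only real subtlety is non-surjectivity of $\Phi$, and passing through exceptional units (or through quotients onto $\F_2$) handles it. This uses the standing assumption that $R'$ is a finite ring with $0\neq 1$, so that \cref{thm:main-conditions} applies to $R'$.
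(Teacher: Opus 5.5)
Your working route is exactly the paper's proof: push an exceptional unit $u$ of $R$ forward to the exceptional unit $\Phi(u)$ of $R'$, using $\Phi(1-u)=1-\Phi(u)$, and then invoke \cref{thm:main-conditions}. Your cross-check is also valid: a unital ring map $\psi\circ\Phi\colon R\to\F_2$ is automatically surjective, so it would contradict condition $(1)$ for $R$.
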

\begin{proof}
Since $R$ has the $2$-sum property, $R$ has an exceptional unit $u.$ Then $\Phi(u)$ is an exceptional unit in $R'.$ By \cref{thm:main-conditions}, $R'$ has the $2$-sum property as well. 
\end{proof}
\begin{rem}
    The converse of \cref{cor:2-property-quotient} is not true. Here is a counterexample: let $R'$ be a ring with the $2$-sum property,  $R=\F_2 \times R'$ and $\Phi\colon  R \to R'$ is the canonical projection map.

    Even under the assumption that $R$ is a subring of $R'$, the converse is also not true. For example, let $R=\F_2 \times \F_2$ and $R'=M_2(\F_2).$ Let $\Phi$ be the diagonal embedding of $\F_2 \times \F_2$ into $M_2(\F_2).$ In this case $R'$ has the $2$-sum property but $R$ does not. Below, we discuss various situations where the converse of \cref{cor:2-property-quotient} holds. 
\end{rem}
First, we deal with the study of rings that are additively generated by units. In general, it is not true that under the same assumption in \cref{cor:2-property-quotient}, $R'$ is additively generated by units when $R$ is. For example, let $R=\F_2$, $R'=\F_2 \times \F_2$, and $\Phi\colon R \to R'$ be the diagonal embedding; namely $\Phi(a)=(a,a).$ Then $R$ is generated by units but $R'$ is not. However, this statement would be true if $R'$ is a quotient of $R.$

\begin{prop} \label{prop:quotient-generated}
        Suppose that $R'$ is a quotient of $R$. If $R$ is additively generated by units, then $R'$ is also additively generated by units. 

\end{prop}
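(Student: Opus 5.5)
The plan is a direct argument: a surjective ring homomorphism sends units to units, so sums of units in $R$ are carried onto sums of units in $R'$. Let $\Phi\colon R \to R'$ be a surjective ring homomorphism. Since $\Phi(1)=1$, for any $u \in R^{\times}$ we have $\Phi(u)\Phi(u^{-1}) = \Phi(uu^{-1}) = \Phi(1) = 1$, and likewise $\Phi(u^{-1})\Phi(u)=1$. Hence $\Phi(R^{\times}) \subseteq R'^{\times}$.

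Now take an arbitrary $y \in R'$. By surjectivity, choose $x \in R$ with $\Phi(x)=y$. Because $R$ is additively generated by its units, we can write $x = \sum_{i=1}^{m} \epsilon_i u_i$ with $u_i \in R^{\times}$ and $\epsilon_i \in \{\pm 1\}$. Since $-u_i$ is again a unit, we may even take all $\epsilon_i = 1$. Applying the additive map $\Phi$ gives
\[
y = \sum_{i=1}^m \Phi(u_i),
\]
a finite sum of units of $R'$. Therefore $R'$ is additively generated by $R'^{\times}$.

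There is essentially no obstacle. The only point needing care is surjectivity, which is precisely what fails for the diagonal embedding $\F_2 \to \F_2\times\F_2$ in the preceding example. As an alternative consistency check, one could use the characterization quoted earlier: $R$ is additively generated by units iff it has no quotient $\F_2\times\F_2$. Any quotient of $R'$ is a quotient of $R$ by composition, so $R'$ has no such quotient either. Either route suffices; I would present the direct argument.
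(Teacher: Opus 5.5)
Your proof is correct and matches the paper's first (algebraic) proof: lift an element of $R'$, write the lift as a sum of units, and push it forward using that $\Phi$ sends units to units. The paper also gives a graph-theoretic version, in which the surjective graph morphism $G_{R}\to G_{R'}$ carries connectedness of $G_R$ to $G_{R'}$, but it rests on the same idea.
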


\begin{proof}
    We give two proofs for this statement. The first one is purely algebraic. Suppose that $r \in R'.$ Let $\widehat{r}$ be a lift of $r$ to $R.$ Then, $\widehat{r}$ is a sum of units in $R.$ By projecting this property to $R'$ and observing this projection sends units to units, we conclude that $r$ is also a sum of units in $R'.$

The second proof is also along the same line, but it is somewhat more \textit{geometric}. The quotient map between $R$ and $R'$ induces a graph morphism between $G_{R}$ and $G_{R'}$ which is surjective at the vertex level. Since $G_{R}$ is connected, $G_{R'}$ is connected as well. Consequently, $R'$ is additively generated by units. 
\end{proof}

Next, we discuss the converse of \cref{cor:2-property-quotient} and \cref{prop:quotient-generated} for certain classes of ring extensions. First, we study the case of group rings. 

\begin{prop}
    Let $G$ be a finite group and $S$ a finite ring. Then, the following statements hold 
    \begin{enumerate}
        \item $S[G]$ is additively generated by units if and only if $S$ is additively generated by units. 
        \item $S$ has the $2$-sum property if and only if $S[G]$ has the $2$-sum property. 
    \end{enumerate}
\end{prop}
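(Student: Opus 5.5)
Both parts rest on the augmentation map $\varepsilon\colon S[G]\to S$, $\sum_g s_g g\mapsto \sum_g s_g$. It is a surjective ring homomorphism preserving $1$, so $S$ is a quotient of $S[G]$. Conversely, $S$ sits inside $S[G]$ as the subring $S\cdot 1_G$, and every $g\in G$ is a unit of $S[G]$.

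For (1), one direction is \cref{prop:quotient-generated} applied to $\varepsilon$: if $S[G]$ is additively generated by units, so is $S$. For the converse, suppose $S$ is additively generated by units. A general element of $S[G]$ is $\sum_g s_g g$. Each $s_g$ is a finite sum of units $u_{i}$ of $S$, so $s_g g=\sum_i u_i g$. Each $u_i g$ is a unit of $S[G]$, with inverse $g^{-1}u_i^{-1}$, since $S$ commutes with $G$ inside $S[G]$. Hence every element of $S[G]$ is a sum of units.

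For (2), if $S[G]$ has the $2$-sum property, then \cref{cor:2-property-quotient} applied to $\varepsilon$ shows that $S$ does. Conversely, suppose $S$ has the $2$-sum property. By \cref{thm:main-conditions}, $S$ has an exceptional unit $u$, so $u$ and $1-u$ are units in $S$. Their images under the inclusion $S\hookrightarrow S[G]$ are still units, so $u$ is an exceptional unit of $S[G]$. Applying \cref{thm:main-conditions} again, $S[G]$ has the $2$-sum property. This is exactly the argument of \cref{cor:2-property-quotient}, run through the inclusion map instead of a quotient.

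No step is a real obstacle once the $(3)\Leftrightarrow(2)$ characterization is available. The one point to check is that $ug$ is invertible in the possibly noncommutative ring $S[G]$. This holds because elements of $S$ commute with group elements in $S[G]$ by construction, so $(ug)(g^{-1}u^{-1})=1=(g^{-1}u^{-1})(ug)$.
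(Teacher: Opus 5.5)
Your proposal is correct and follows the paper's proof essentially step for step. You use the augmentation map together with \cref{prop:quotient-generated} and \cref{cor:2-property-quotient} for the descent directions, and the decomposition $s_g g=\sum_i u_i g$ for the ascent in (1); for the ascent in (2) you apply the inclusion $S\hookrightarrow S[G]$, where you merely unpack \cref{cor:2-property-quotient} (already stated for arbitrary ring maps) via the exceptional unit.
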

\begin{proof}
Let us first focus on $(1).$ Suppose that $S[G]$ is generated by units. Since the augmentation map $ \epsilon\colon S[G] \to S$ is surjective, \cref{prop:quotient-generated} implies that $S$ is also additively generated by units. Conversely, suppose that $S$ is additively generated by units. We claim that $S[G]$ is also additively generated by units. In fact, let us consider a basic element of $S[G]$ of the form $sg$ where $s \in S$ and $g \in G.$ Because $S$ is additively generated by units,  we can write $s= \sum_{j} u_j$ with $u_j \in S^{\times}.$ Therefore 
\[ sg = \sum_{j} u_jg. \]
By definition $u_jg \in S[G]^{\times}.$ Therefore, $sg$ belongs to the abelian group generated by elements of $S[G]^{\times}.$ Since each element of $S[G]$ is a sum of basic elements of the form $sg$, we conclude that $S[G]$ is additively generated by units.

Let us now focus on $(2).$ Suppose that $S$ has the $2$-sum property. Since $S$ is a subring of $S[G]$, \cref{cor:2-property-quotient} implies that $S[G]$ also has the $2$-sum property. Conversely, if $S[G]$ has the $2$-sum property then using the augmentation map $\epsilon\colon S[G] \to S$ and \cref{cor:2-property-quotient}, we conclude that $S$ also has the $2$-sum property. 
    \end{proof}

When $S$ is a finite field, we have the following simple yet interesting result. We remark that the third property follows from our graph-theoretic argument.

\begin{cor} \label{cor:F[G]}
    Let $F$ be a finite field and $G$ a finite group. 
    \begin{enumerate}
        \item  $F[G]$ is additively generated by units. 
        \item $F[G]$ has the 2-sum property if and only if $F \neq \F_2.$
        \item When $F=\F_2$, $\F_2[G]$ has no quotient of the form $\F_2 \times \F_2.$
    \end{enumerate}
\end{cor}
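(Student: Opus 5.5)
The plan is to deduce all three parts from the preceding proposition on group rings $S[G]$, applied with $S=F$, together with \cref{thm:main-conditions} and the characterization recalled at the start of this section: a finite ring is additively generated by units if and only if it has no quotient isomorphic to $\F_2\times\F_2$.

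For (1), I will use part (1) of the preceding proposition, which reduces the claim to checking that $F$ itself is additively generated by units. This holds because every nonzero element of a field is a unit, and $0=1+(-1)$. Hence $F[G]$ is additively generated by units.

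For (2), part (2) of the preceding proposition reduces the claim to whether $F$ has the $2$-sum property. By \cref{thm:main-conditions}, this happens exactly when $F$ has no quotient isomorphic to $\F_2$. A field has no proper nonzero ideals, so its only quotient (with $0\neq 1$) is $F$ itself. Therefore $F$ has the $2$-sum property if and only if $F\neq\F_2$.

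For (3), I will combine (1) with the characterization of rings additively generated by units. Since $\F_2[G]$ is additively generated by units, it cannot have a quotient $\F_2\times\F_2$. The paper wants the graph-theoretic phrasing of this step, so I will phrase it that way. Suppose $\Phi\colon \F_2[G]\to\F_2\times\F_2$ were surjective. The induced morphism $G_{\F_2[G]}\to G_{\F_2\times\F_2}$ is surjective on vertices, so it would force $G_{\F_2\times\F_2}$ to be connected, exactly as in the second proof of \cref{prop:quotient-generated}. But $G_{\F_2\times\F_2}$ is disconnected: its only unit is $(1,1)$, so its edges are $\{(0,0),(1,1)\}$ and $\{(1,0),(0,1)\}$. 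This contradiction proves (3).

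I do not expect a genuine obstacle. The only point needing care is in (2), namely confirming that a field other than $\F_2$ has no quotient isomorphic to $\F_2$, which follows because the only quotient of a field is the field itself.
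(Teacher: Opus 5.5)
Your proposal is correct and follows the paper's route. Parts (1) and (2) come from the preceding proposition on $S[G]$ with $S=F$ (using \cref{thm:main-conditions} for (2)), and part (3) comes from the graph-theoretic observation that a ring additively generated by units cannot surject onto $\F_2\times\F_2$, whose unitary Cayley graph is disconnected. The paper also records a purely algebraic proof of (3): every ring map $\F_2[G]\to\F_2$ must be the augmentation $\epsilon$, so any map $\F_2[G]\to\F_2\times\F_2$ lands in the diagonal copy of $\F_2$.
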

\begin{rem}
Here we provide a purely algebraic proof of the third part of \cref{cor:F[G]}. Let
$\Theta : \mathbb{F}_2[G] \longrightarrow \mathbb{F}_2$
be any surjective homomorphism. Then $\Theta(g)=1$ for all $g \in G$. Hence $\Theta=\epsilon$ is the augmentation homomorphism.

Now consider any surjective homomorphism
\[
\psi : \mathbb{F}_2[G] \longrightarrow \mathbb{F}_2 \times \mathbb{F}_2.
\]
Then
\[
\psi(\alpha) = \bigl(\psi_1(\alpha),\psi_2(\alpha)\bigr),
\qquad \text{for all } \alpha \in \mathbb{F}_2[G],
\]
where
\[
\psi_i : \mathbb{F}_2[G] \longrightarrow \mathbb{F}_2,
\qquad i=1,2,
\]
are also homomorphisms. As we observed above, $\psi_1=\epsilon=\psi_2.$ Hence
\[
\psi(\alpha)=\bigl(\epsilon(\alpha),\epsilon(\alpha)\bigr),
\qquad \forall \alpha \in \mathbb{F}_2[G].
\]

Thus
\[
\psi\bigl(\mathbb{F}_2[G]\bigr)
=
\mathbb{F}_2
\subsetneq
\mathbb{F}_2 \times \mathbb{F}_2,
\]
and $\psi$ is not surjective---a contradiction.
\end{rem}

Another class of rings where the converse of \cref{cor:2-property-quotient} holds is the following.
\begin{prop}
    Let $\Phi\colon R \to R'$ be an infinitesimal extension; namely, $\Phi$ is surjective, and the kernel of $\Phi$ is a nilpotent ideal. If $R'$ has the $2$-sum property, then so does $R.$
\end{prop}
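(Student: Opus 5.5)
The plan is to lift an exceptional unit from $R'$ to $R$ and then invoke \cref{thm:main-conditions}. Since $R'$ has the $2$-sum property, the implication $(2)\implies(3)$ of \cref{thm:main-conditions} gives an exceptional unit $u' \in R'$; that is, both $u'$ and $1-u'$ are units of $R'$. Because $\Phi$ is surjective, we may choose some $u \in R$ with $\Phi(u)=u'$. Then $\Phi(1-u) = 1-u'$. It therefore suffices to show that an element of $R$ whose image under $\Phi$ is a unit is itself a unit of $R$. Once this is established, both $u$ and $1-u$ are units, so $u$ is an exceptional unit of $R$, and the implication $(3)\implies(2)$ gives the $2$-sum property for $R$.

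The key step is this unit-lifting lemma for a nilpotent kernel $I=\ker(\Phi)$. Suppose $\Phi(x)$ is a unit, and let $y \in R$ be any lift of $\Phi(x)^{-1}$. Then
\[ xy = 1 - a \quad\text{and}\quad yx = 1-b \]
for some $a,b \in I$. Every element of the nilpotent ideal $I$ is nilpotent, so $1-a$ is invertible, with inverse given by the finite geometric series $\sum_k a^k$; likewise $1-b$ is invertible. Hence $x$ has a right inverse $y(1-a)^{-1}$ and a left inverse $(1-b)^{-1}y$, and therefore $x$ is a two-sided unit. (Alternatively, since $R$ is finite, a one-sided inverse already suffices, but the two-sided argument avoids needing finiteness.)

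There is an alternative route via the Jacobson radical. A nilpotent ideal is contained in $\Rad(R)$, and this is the reason units lift along $\Phi$. One could also note that $R^{\s}$ is a quotient of $R'$ and argue through the semisimplification, as in the proof of \cref{thm:main-conditions}. The direct lifting argument above is cleaner, however.

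I expect no real obstacle here. The only point requiring care is noncommutativity: we must make sure that $x$ acquires a genuine two-sided inverse, rather than merely a one-sided one, which is exactly why both $xy$ and $yx$ are handled above.
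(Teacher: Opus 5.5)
Your proposal is correct and follows essentially the same route as the paper: show that lifts of units along the nilpotent kernel are units, lift an exceptional unit of $R'$ to $R$, and conclude via \cref{thm:main-conditions}. The only difference is minor: you build a two-sided inverse directly by treating both $xy$ and $yx$, whereas the paper produces a right inverse and then uses finiteness of $R$ to conclude that right-invertible elements are invertible.
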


\begin{proof}
We first claim that if $x \in (R')^{\times}$ and $\widehat{x}$ is a lift of $x$ to $R$ then $\widehat{x}$ is a unit in $R.$ In fact, let $y$ be the inverse of $x$ and $\widehat{y}$ is a lift of $y$ to $R$. Then $\Phi(\widehat{x} \widehat{y})=xy=1=\Phi(1).$ This implies that $(\widehat{x} \widehat{y} -1) \in \ker(\Phi)$. Consequently, $\widehat{x} \widehat{y} = 1 + m$ where $m \in \ker(\Phi)$. Because $\ker(\Phi)$ is nilpotent, $1+m \in R^{\times}$ and hence $\widehat{x} \in R^{\times}$. We remark that since $R$ is finite, right-invertible implies invertible (see \cite[Proposition 2.1]{perfect-unitary}.)

    Because $R'$ has the $2$-sum property, \cref{thm:main-conditions} implies that it has an exceptional unit $u$. Let $\widehat{u}$ be a lift of $u$ to $R.$ Since $\ker(\Phi)$ is nilpotent, both $\widehat{u}, 1-\widehat{u}$ are units in $R.$ By definition, $\widehat{u}$ is an exceptional unit in $R.$ Therefore, $R$ has the $2$-sum property. 
\end{proof}

\subsection{Elements in a finite ring which are the sum of two units}
In this section, we study the following question: which elements in a finite ring $R$ can be written as a sum of two units? Let us first start with a motivational problem; namely, we study the sum of all units in a finite ring $R$. The case where $R$ has the $2$-sum property is relatively easy. 

\begin{prop} \label{prop:sum-units}
    Suppose that $R$ has the $2$-sum property. Let $S$ be a set which is stable under the left (or right) action of $R^{\times}$; namely, $R^{\times} S \subset S.$ Then 
    \[ T_{S} = \sum_{s \in S} s = 0.\]
    In particular 
    \[ T_{R^{\times}}=\sum_{u \in R^{\times}}u = 0.\]
\end{prop}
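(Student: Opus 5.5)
The plan is to use an exceptional unit to translate the sum $T_S$ by a unit and compare. By \cref{thm:main-conditions}, since $R$ has the $2$-sum property, it has an exceptional unit $u$, so both $u$ and $1-u$ lie in $R^{\times}$. Suppose $S$ is stable under left multiplication by $R^{\times}$; the right case is symmetric. For any $v \in R^{\times}$, left multiplication $s \mapsto vs$ maps $S$ into $S$. It is injective because $v$ is invertible, and $S$ is finite as a subset of the finite ring $R$. Hence it is a bijection of $S$, and reindexing gives
\[ v\,T_S = \sum_{s \in S} vs = \sum_{s' \in S} s' = T_S . \]

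Applying this with $v = u$ gives $uT_S = T_S$, which rearranges to $(1-u)T_S = 0$. Since $1-u$ is a unit, multiplying on the left by $(1-u)^{-1}$ yields $T_S = 0$. For the right-stable case we get $T_S(1-u)=0$ instead and multiply by $(1-u)^{-1}$ on the right.

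The particular case follows by taking $S = R^{\times}$, which is closed under left multiplication by units. The only point needing care is that each $v \in R^{\times}$ acts as a genuine permutation of $S$, not merely a map into $S$; injectivity together with finiteness handles this. I therefore do not expect any step to be a real obstacle.
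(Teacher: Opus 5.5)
Your proposal is correct and follows essentially the same argument as the paper. Both use the exceptional unit $u$ from \cref{thm:main-conditions} to get $uT_S = T_S$, hence $(1-u)T_S=0$, and then invert $1-u$; your extra remark that left multiplication by a unit permutes $S$ (injective on a finite set) fills in a detail the paper leaves implicit.
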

\begin{proof}

We give the proof for the case where \(S\) is stable under the left action; the right-action case is analogous.
    Since $S$ is left stable under the action of $R^{\times}$, 
    \[ uT_{S} = \sum_{s \in S} us = \sum_{s \in S} s= T_{S}. \] 
   Therefore, $(u-1) T_{S}=0$ for all $u \in R^{\times}.$ Furthermore, since $R$ has the $2$-sum property, by \cref{thm:main-conditions}, it has an exceptional unit $u.$ Since $u-1 \in R^{\times}$, this would imply that $T_{S}=0.$
\end{proof}

One may wonder what happens in \cref{prop:sum-units} if $R$ does not have the $2$-sum property. The argument given above shows that for all $u, v \in R^{\times}$, $(u+v)T_{S}=0$.   We see that, in order to answer this question,  we need to dive a bit deeper into Skornyakov's problem. More precisely, we will need to answer the question raised at the beginning of this section: which elements of a ring $R$ can be written as the sum of two units? In \cite{nguyengcd2026}, we study this problem when the ring $R$ is finite and commutative. We will show below that the same argument works for any finite ring as well. Let us recall that by the Artin-Wedderburn theorem, $R^{\s}$ is isomorphic to a product of matrix rings $M_{n_i}(F_i)$ where $n_i \geq 1$ and $F_i$ is a finite field. Let $r$ be the number of factors of $R^{\s}$ such that $n_i=1$ and $\F_i=\F_2.$ Then  $R^{\s} = \F_2^r \times R_0$ 
where $R_0$ does not have $\F_2$ as a quotient. In particular, $R_0$ has the $2$-sum property by \cref{thm:main-conditions}. We are now ready to state our result.

\begin{prop} \label{prop:which-element-sum-2}
  Let $\Phi$ be the composition $R \to R^{\s}:=\F_2^r \times R_0 \to \F_2^r$ where $r$ is as above. Let $\ker(\Phi)$ be its kernel. Then, every element of $\ker(\Phi)$ is the sum of two units. Conversely, an element of $R$ can be written as the sum of two units if and only if it is in $\ker(\Phi).$
\end{prop}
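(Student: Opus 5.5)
The plan is to reduce to the semisimplification and then argue componentwise. Write $\pi\colon R \to R^{\s}=\F_2^r\times R_0$ for the natural map and $\Phi = p\circ \pi$, where $p$ is the projection onto $\F_2^r$. The one fact from the proof of \cref{thm:main-conditions} we rely on is that an element of $R$ is a unit if and only if its image in $R^{\s}$ is a unit. Consequently $x\in R$ is a sum of two units in $R$ if and only if $\pi(x)$ is a sum of two units in $R^{\s}$.

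\emph{Reduction to $R^{\s}$.} Suppose $\pi(x)=a+b$ with $a,b\in (R^{\s})^{\times}$. Choose any lift $\widehat a$ of $a$. Then $\widehat a$ is a unit, and so is $x-\widehat a$, because its image is $b$. This gives the decomposition in $R$. The other direction is immediate, since $\pi$ sends units to units. So it suffices to describe the sums of two units in $\F_2^r\times R_0$.

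\emph{Description in $R^{\s}$.} Units of a product are exactly the tuples of units, so $(R^{\s})^{\times}=\{(1,\dots,1)\}\times R_0^{\times}$. Hence a sum of two units has the form $(0,\dots,0, u+v)$ with $u,v \in R_0^{\times}$. This proves the converse direction: any sum of two units lies in $\ker(\Phi)$.

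For the forward direction, note that $R_0$ has no quotient $\F_2$, so by \cref{thm:main-conditions} it has the $2$-sum property. Every element $(0,\dots,0,c)$ with $c\in R_0$ is therefore $(1,\dots,1,u)+(1,\dots,1,v)$ with $c=u+v$. So every element of $\ker(p)$ is a sum of two units in $R^{\s}$, and by the reduction every element of $\ker(\Phi)=\pi^{-1}(\ker p)$ is a sum of two units in $R$.

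The one point needing care is the assertion that $R_0$ has no $\F_2$ quotient. Since $R_0$ is a product of simple rings $M_{n_i}(F_i)$ with $(n_i,F_i)\neq(1,\F_2)$, a surjection $R_0\to\F_2$ has as kernel a maximal two-sided ideal. That kernel is the product of all but one factor, so it would force some factor to be isomorphic to $\F_2$, which is excluded. The paper already asserts this, so no real obstacle is expected.
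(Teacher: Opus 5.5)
Your proof is correct and follows essentially the same route as the paper. Both arguments pass to $R^{\s}=\F_2^r\times R_0$, write $(0,s)=(1,u_1)+(1,u_2)$ using the $2$-sum property of $R_0$, lift back to $R$, and for the converse use that the only unit of $\F_2^r$ is $1$, so $1+1=0$. The only difference is in the lifting step: you lift one unit $a$ to $\widehat a$ and take $x-\widehat a$, whereas the paper lifts both units and absorbs the radical correction $m$ into one of them.
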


\begin{proof}
Suppose that $x \in \ker(\Phi)$. Let $\bar{x}$ be the image of $x$ in $R^{\s}=\F_2^r \times R_0.$ Because $\bar{x} \in \ker(\Phi)$, $\bar{x}$ is of the form $(0,s)$ where $s \in R_0.$ We know that $R_0$ has the $2$-sum property, so we can write $s=u_1+u_2$ where $u_1, u_2 \in R_0^{\times}.$ We then see that $\bar{x} $ has the following presentation as the sum of two units
\[ \bar{x}=(1,u_1)+(1,u_2).\]
Let $v_1$ (respectively $v_2$) be the lift of $(1,u_1)$ (respectively $(1,u_2)$) to $R.$ Then $v_1, v_2 \in R^{\times}$ and $x =v_1+v_2+m$ for some $m \in \Rad(R).$ Since $m \in \Rad(R)$, $v_2+m \in R^{\times}.$ By letting $v_3=v_2+m$, we see that $x$ is the sum of two units; namely $v_1$ and $v_3.$

Conversely, suppose that $x$ is the sum of two units. Then $\Phi(x)$ is also the sum of two units. Since the only unit in $\F_2^r$ is $1$, we conclude that $\Phi(x)=1+1=0$. Equivalently, $x \in \ker(\Phi).$
\end{proof}
Let us keep the same notation as in \cref{prop:which-element-sum-2}. We then have the following corollary. 
\begin{cor}
    Suppose that $S$ is stable under the action of $R^{\times}.$ Let 
    \[ T_{S} = \sum_{s \in S} s.\]
    Then $aT_{S}=0$ for all $a \in \ker(\Phi).$

\end{cor}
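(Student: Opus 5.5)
We combine the invariance argument from the proof of \cref{prop:sum-units} with the description of sums of two units in \cref{prop:which-element-sum-2}. We read ``stable under the action of $R^{\times}$'' as stability under the left action, $R^{\times}S \subset S$. The right-action version is symmetric: there one obtains $T_S a = 0$, and the two-sided statement follows when $S$ is stable on both sides, as for gcd-graphs.

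First, for every $u \in R^{\times}$, left multiplication by $u$ is a bijection of $R$ that maps $S$ into $S$. Since $S$ is finite, it maps $S$ onto $S$, so it permutes $S$. Hence
\[ uT_S = \sum_{s\in S} us = \sum_{s \in S} s = T_S, \]
that is, $(u-1)T_S = 0$ for every unit $u$.

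Next, take two units $u, v \in R^{\times}$. We have $uT_S = T_S$ and $vT_S = T_S$. Adding gives $(u+v)T_S = 2T_S$, which alone does not give zero. To kill the $2T_S$ term we use $-v$, which is also a unit. Since $(-v)T_S = T_S$ and $(-v)T_S = -vT_S = -T_S$, we get $2T_S = 0$. Therefore
\[ (u+v)T_S = uT_S + vT_S = T_S + T_S = 2T_S = 0. \]
This is exactly the remark made after \cref{prop:sum-units}.

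Finally, let $a \in \ker(\Phi)$. By \cref{prop:which-element-sum-2}, $a = u+v$ for some $u, v \in R^{\times}$, so $aT_S = (u+v)T_S = 0$. The only points needing care are the sign trick giving $2T_S = 0$ and the use of \cref{prop:which-element-sum-2}; both are straightforward given the earlier results.
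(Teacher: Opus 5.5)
Your proof is correct and follows essentially the paper's argument: the invariance $uT_S=T_S$ for every unit $u$, combined with \cref{prop:which-element-sum-2}. The only cosmetic difference is that the paper writes $a=u_1-u_2$ as a difference of two units (absorbing the sign into $-v\in R^{\times}$), so $aT_S=T_S-T_S=0$ immediately; this is the same sign trick you use to obtain $2T_S=0$.
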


    \begin{proof}
The fact that $aT_{S}=0$ follows from the same argument as in \cref{prop:sum-units} and the fact that every element in $\ker(\Phi)$ is the difference of two units.  In fact, we can find $u_1, u_2 \in R^{\times}$ such that $a=u_1-u_2.$ Since $u_1 T_{S}=u_2T_S=T_S$, we conclude that $aT_{S}=0.$
\end{proof}
When $S=R^{\times}$, we can calculate $T_{R^{\times}}$ explicitly. 
\begin{prop}
 $T_{R^{\times}}=\sum_{u \in R^{\times}}u \neq 0$ if and only if $R$ is one of the following rings. 
    \begin{enumerate}
        \item $2=0$ in $R$, $|\Rad(R)|=2$ and $R/\Rad(R) \cong \prod_{i=1}^d F_i$ where each $F_i$ is a finite field of characteristic $2$. In this case, $T_{R^{\times}}=e$ where $e$ is the unique non-zero element in $\Rad(R).$
        \item $\Rad(R)=0$ and $R \cong \F_2^r \times \left(\prod_{i=1}^t F_i \right)$ where $r \geq 1$, $F_i$ is a finite field of characteristic $2$ and $|F_i|>2$ for each $1 \leq i \leq t. $ In this case $T_{R^{\times}}=(1_{\F_2^r}, 0).$
    \end{enumerate}
\end{prop}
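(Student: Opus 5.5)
We first reduce the computation to the semisimple quotient and then handle the radical. Write $J=\Rad(R)$ and $\pi\colon R\to R^{\s}$. Since $u\in R^{\times}$ iff $\pi(u)\in (R^{\s})^{\times}$, the unit group is the disjoint union of the cosets $\hat v+J$, where $v$ runs over $(R^{\s})^{\times}$ and $\hat v$ is a fixed lift of $v$. Hence
\[ T_{R^{\times}}=\sum_{v}\Bigl(|J|\,\hat v+\sum_{m\in J} m\Bigr)=|J|\sum_v \hat v+|(R^{\s})^{\times}|\,T_J ,\qquad T_J=\sum_{m\in J}m. \]
The plan is to analyse the two terms. First, $T_J$: since $J$ is a finite additive group, $T_J=0$ unless its $2$-Sylow subgroup is cyclic and nontrivial. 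But $J$ is a left module over $R$, in fact over the subring generated by $1$, so $T_J$ equals the sum of the elements of order dividing $2$ that are not killed by pairing $m\leftrightarrow -m$. Concretely, $T_J$ is the unique element of order $2$ in $J$ if the $2$-part of $J$ is cyclic, and $0$ otherwise.

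Second, I would use the earlier results to kill most cases. If $R$ has the $2$-sum property, i.e.\ $r=0$, then \cref{prop:sum-units} gives $T_{R^{\times}}=0$. So assume $r\ge 1$. Then the corollary after \cref{prop:which-element-sum-2} shows that $aT_{R^{\times}}=0$ for all $a\in\ker(\Phi)$, in particular $2T_{R^{\times}}=0$ and $J\,T_{R^{\times}}=0$. Now apply the coset formula. If $|J|$ is even and $|J|>2$, or more generally $J$ is not of order $\le 2$, I expect $T_{R^{\times}}=0$. The reason is that $J$ is a module over the finite ring, and $|J|\hat v$ vanishes once we know $2\cdot$ stuff is killed. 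This is the step needing care: the characteristic of $R$ is a power of $2$ times odd, and I would first show $2\in J$ or that the odd part contributes $0$ by splitting $R=R_2\times R_{odd}$ by CRT. The odd factor has no $\F_2$ quotient and contributes $0$ by \cref{prop:sum-units}. Then a nonzero total forces $R_{odd}=0$, but the theorem does not allow an odd factor at all. Indeed, for $R=R_2\times R'$ the units are a product, and the sum of units is $(|R'^{\times}|T_{R_2^{\times}},|R_2^{\times}|T_{R'^{\times}})$. So I would first prove a product lemma and reduce to indecomposable-characteristic pieces.

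For $R$ of $2$-power characteristic, the term $T_J$ is nonzero exactly when $J$ is cyclic of $2$-power order as an additive group. Combined with $2T=0$ and $JT=0$, I expect to show $J$ has order $\le 2$. The main obstacle is ruling out $J\cong\Z/2^k$ for $k\ge2$, including rings like $\Z/4$ and the other product and multiplicity cases. Here the coset formula should give $T=|J|\sum\hat v+|(R^{\s})^\times| T_J$, and we need to check parity. In the case $J=\{0,e\}$ with $2=0$: $|J|=2$ kills the first term, and $|(R^{\s})^{\times}|$ is a product of $(|F_i|-1)$, all odd, giving $T=e$, which is case (1). One must also verify that $2=0$ and that $R^{\s}$ is commutative. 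A matrix factor $M_n(F)$ with $n\ge2$ has no $\F_2$ quotient and would create a factor killing $T$ via the exceptional-unit argument, and an odd field factor gives an even unit count. When $J=0$, $R=\prod M_{n_i}(F_i)$ and the sum factorizes. Each factor with the $2$-sum property contributes $0$ and each $\F_2$ contributes $1$. Nonzero thus requires that every other factor have odd unit count, i.e.\ be a field of characteristic $2$, giving case (2). Finally, I would verify the converse directions by direct computation in each listed ring.
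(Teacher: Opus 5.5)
Your coset formula $T_{R^{\times}}=|J|\sum_v\hat v+|(R^{\s})^{\times}|\,T_J$ is correct, and when $2=0$ in $R$ it does the job. If $J\neq 0$, then $|J|$ is a positive power of $2$, so the first term vanishes. Moreover $T_J\neq 0$ exactly when $J=\{0,e\}$, which gives $T_{R^{\times}}=|(R^{\s})^{\times}|\,e$, nonzero if and only if $R^{\s}$ is a product of fields. This is a fine substitute for the paper's pairing $u\leftrightarrow u+e$ together with the exact sequence $1\to 1+\Rad(R)\to R^{\times}\to (R^{\s})^{\times}\to 1$. Your semisimple analysis also matches the paper's Case 3.

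The gap is the case $2\neq 0$ with $R$ of $2$-power characteristic, which you leave as ``the main obstacle'' supported only by ``I expect''. The tools you propose cannot close it, as $\Z/4$ shows:
\begin{enumerate}
\item There $J=\{0,2\}$, so this is not an instance of $J\cong\Z/2^k$ with $k\geq 2$, and $|J|\leq 2$ already holds.
\item Both terms of your coset formula equal $2\neq 0$, and they cancel only in the sum.
\item The false value $T_{R^{\times}}=2$ would satisfy both constraints you extract from the corollary, namely $2T_{R^{\times}}=0$ and $J\,T_{R^{\times}}=0$.
\end{enumerate}
So neither a term-by-term analysis nor those annihilation conditions determine $T_{R^{\times}}$. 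What you actually need is that $T_{R^{\times}}\neq 0$ forces $2=0$, and you give no route to this.

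The missing idea is the paper's Case 1. If $2\neq 0$ in $R$, the involution $u\mapsto -u$ of $R^{\times}$ has no fixed points: if $u=-u$ then $2u=0$, hence $2=2uu^{-1}=0$. So the units split into pairs $\{u,-u\}$ and $T_{R^{\times}}=0$. This disposes of every ring with $2\neq 0$ at once and makes your CRT splitting into $2$-primary and odd parts unnecessary. It leaves exactly the case $2=0$, where your argument goes through.
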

\begin{proof}
First, we observe that if $R=R_1 \times R_2$ then 
\[ T_{R^{\times}} = (|R_2^{\times}| T_{R_{1}^\times}, |R_1^{\times}| T_{R_2^{\times}}).\]

In fact 
\begin{align*}
T_{R^{\times}} &= \sum_{u_1 \in R_1^{\times}, u_2 \in R_2^{\times}} (u_1, u_2) = \sum_{u_1 \in R_1^{\times}} \sum_{u_2 \in R_2^{\times}}(u_1, u_2)\\ 
&= \sum_{u_1 \in R_1^{\times}}(|R_2^{\times}| u_1, T_{R_2^{\times}}) =(|R_2^{\times}| T_{R_{1}^\times}, |R_1^{\times}| T_{R_2^{\times}}). 
\end{align*}
We consider three cases. \\
\textbf{Case 1.} $2 \neq 0$ in $R.$ Then, we can decompose the set of units into pairs $\{u, -u\}.$ In this case, $T_{R^{\times}}=0.$  \\
\textbf{Case 2.} $2=0$ in $R$ and $\Rad(R) \neq 0.$ 

Let $e$ be an arbitrary non-zero element in  $\Rad(R).$ Then, we can decompose $R^{\times}$ into pairs $\{u, u+e\}.$ We then see that 
\[ T_{R^{\times}} = \frac{|R^{\times}|}{2}e.\]
We remark that there is an exact sequence of (multiplicative)  groups
\[ 1 \to 1+ \Rad(R) \to R^{\times} \to (R/\Rad(R))^{\times} \to 1. \]
Furthermore, since $2=0$ in $R$, $1+\Rad(R)$ is a $2$-group. Therefore, we can conclude that $\dfrac{|R^{\times}|}{2} \neq 0$ in $R$ if and only if $\dfrac{|R^{\times}|}{2}$ is odd. By the above exact sequence, this happens if and only if $|\Rad(R)|=2$ and $|(R/\Rad(R))^{\times}|$ is odd. By the Artin-Wedderburn theorem, $R/Rad(R)=\prod_{i=1}^d M_{n_i}(F_i)$. We know that 
\[|M_{n_i}(F_i)^{\times}| = \prod_{j=0}^{n_i-1}(|F_i|^{n_i}-|F_i|^j), \]
which is even unless $n_i=1.$ We conclude that $R/\Rad(R)$ is a product of finite fields of characteristic $2.$ In this case, $e$ is the unique non-zero element in $\Rad(R)$ and $T_{R^{\times}}=\dfrac{|R^{\times}|}{2}e=e.$\\
\textbf{Case 3.} $2=0$ in $R$ and $\Rad(R)=0.$ In this case $R=\prod_{i=1}^d M_{n_i}(F_i)$ where $n_i \geq 1$ and each $F_i$ is a finite field of characteristic $2.$ We then have 
\[ T_{R^{\times}} = \left( \prod_{i \neq 1} |M_{n_i}(F_i)^{\times}|, \ldots, \prod_{i \neq d}|M_{n_i}(F_i)^{\times}| \right) (T_{M_{n_1}(F_1)^{\times}}, \ldots, T_{M_{n_d}(F_d)^{\times}}).\]
In other words, for each \(1\leq i\leq d\), the \(i\)-th component of \(T_{R^\times}\) is
\[
\left(\prod_{j\neq i}|M_{n_j}(F_j)^\times|\right)
T_{M_{n_i}(F_i)^\times}.
\]
We claim that in order for $T_{R^{\times}} \neq 0$, a necessary condition is $n_i=1$ for all $1 \leq i \leq d.$ Suppose to the contrary that it is not the case. Without loss of generality, assume that $n_1>1.$ Since $M_{n_1}(F_1)$ has the $2$-sum property , we conclude that the first component of \(T_{R^\times}\) is $0.$ Additionally, similar to the previous case, we have 
\[|M_{n_1}(F_1)^{\times}| = \prod_{j=0}^{n_1-1}(|F_1|^{n_1}-|F_1|^j), \]
which is even. Since $2=0$ in $R$, this would imply that for each $j \neq 1$, the $j$-th component of $T_{R^{\times}}$ is also $0.$ Consequently, $T_{R^{\times}}=0.$

Let us now assume that $n_i=1$ for all $1 \leq i \leq d.$ Then $|M_{n_i}(F_i)^{\times}|=|F_i^{\times}|=|F_i|-1$ which is odd. We then have 
\[ T_{R^{\times}} = (T_{M_{n_1}(F_1)^{\times}}, \ldots, T_{M_{n_d}(F_d)^{\times}}).\]
We know that $T_{F_i^{\times}}=0$ if $|F_i|>2$ and $T_{F_i^{\times}}=1$ if $F_i =\F_2.$ Therefore, $T_{R^{\times}} \neq 0$ if and only if $n_i=1$ for each $1 \leq i \leq d$ and there exists $1 \leq i \leq d$ such that $F_i=\F_2.$
\end{proof}
We now utilize \cref{prop:which-element-sum-2} to give a complete classification of gcd-graphs that are connected. We recall that a Cayley graph $\Gamma(R,S)$ is called a gcd-graph if $S$ is stable under the left and right actions of $R^{\times}$; namely, $R^{\times} S R^{\times}=S.$ This kind of graph is widely studied in the literature, starting with the work of Klotz-Sander on gcd-graphs over the cyclic ring $\Z/n$. Later developments study gcd-graphs over polynomial rings, unique factorization domains, finite chain rings (see \cite{kiani2011energy, minavc2024gcd, suntornpoch2016cayley}), and culminating with the most general definition in \cite{nguyengcd2026}.

Let \(S_1,\ldots,S_m\) be the orbits of the double coset \(R^\times\backslash R/R^\times\).  Then as explained in \cite{nguyen2025supercharacters}, $\Gamma(R,S)$ is a gcd-graph if $S$ is a disjoint union of some of the cosets $S_i$'s. Namely, there exists a subset $I \subset \{1, 2, \ldots, m\}$ such that $S= \bigsqcup_{i \in I} S_i.$

For each $x \in R$, we denote by $I_x$ to be the two sided ideal generated by $x.$ By definition, $I_x$ is the following set 
\[ I_x = \left\{ \sum_{i} a_i x b_i \mid a_i, b_i \in R \right\}.\]
We remark that if $x$ and $y$ belong to the same coset; namely $x=u yv$ where $u, v \in R^{\times}$ then  $I_x = I_y.$

\begin{thm}
Let \(\Phi:R\to \F_2^r\) be the composition
\[
R\to R^{ss}\cong \F_2^r\times R_0 \to \F_2^r,
\]
where \(R_0\) has no quotient isomorphic to \(\F_2\). Let $\Gamma(R,S)$ be a gcd-graph over $R$ where  $S = \bigsqcup_{i \in I} S_i$ for some $I \subset \{1, 2, \ldots, m \}$. Let $I_i$ be the two-sided ideals generated by $x_i \in S_i$ ($I_i$ is independent of the choice of $x_i$). Then $\Gamma(R,S)$ is connected if and only if the following conditions hold 
 \begin{enumerate}
     \item $\sum_{i \in I} I_i =R.$
     \item The cube-like graph $\Gamma(\F_2^r, \Phi(S))$ is connected. 
 \end{enumerate}
\end{thm}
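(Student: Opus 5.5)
Throughout, write $\langle S\rangle$ for the additive subgroup generated by $S$; the plan is to prove that $\langle S\rangle=R$ if and only if conditions (1) and (2) hold. Note that $S=-S$, since $-1\in R^{\times}$ and $S$ is stable under $R^{\times}$. Hence $\Gamma(R,S)$ is connected exactly when $\langle S\rangle=R$, which is what we want.

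\emph{Necessity.} Suppose $\langle S\rangle=R$. Every $s\in S$ lies in some $S_i$ with $i\in I$, so $s\in I_i$, and therefore $R=\langle S\rangle\subseteq\sum_{i\in I}I_i$. This gives (1). Since $\Phi$ is a surjective additive map, applying it to $\langle S\rangle=R$ gives $\langle \Phi(S)\rangle=\F_2^r$. In the Cayley graph $\Gamma(\F_2^r,\Phi(S))$, generating the whole group is the same as connectedness, so (2) holds. This is the same argument as the second proof of \cref{prop:quotient-generated}.

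\emph{Sufficiency.} Assume (1) and (2), and put $H=\langle S\rangle$.

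The first step is to show that $H$ is closed under left and right multiplication by units, and hence by arbitrary elements of $\ker(\Phi)$. Closure under units holds because $uSv=S$ for $u,v\in R^{\times}$. For $a\in\ker(\Phi)$, \cref{prop:which-element-sum-2} lets us write $a=u_1+u_2$ with $u_1,u_2\in R^{\times}$. Then $aH\subseteq u_1H+u_2H\subseteq H$, and $Ha\subseteq H$ in the same way. Write $K=\ker(\Phi)$, a two-sided ideal. It follows that $KHK\subseteq H$, and also $KH+HK\subseteq H$.

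The second step uses (1) together with this closure. Since $I_i=RSR$ restricted to the orbit $S_i$, condition (1) says $RSR=R$, i.e. $\sum_{a,b\in R} aSb$ spans $R$ additively. Choose coset representatives $c_1,\dots,c_k$ of $K$ in $R$; for instance, lift elements of $\F_2^r$. Any $a\in R$ can then be written as $c_j+\kappa$ with $\kappa\in K$. Expanding $aSb$ gives
\[ aSb\subseteq c_jSc_l+KH+HK+KHK\subseteq c_jSc_l+H. \]
The main obstacle is the leftover terms $c_jSc_l$. The idea for handling them is to choose the lifts to be units. This is possible: units in $\F_2^r$ exist only at $1$, but lifts of the other idempotents $(\epsilon,0)$ are not units.

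The better route is to work modulo $H$ via the quotient $R/H$. Let $M=R/H$. By the first step, $M$ is an $R$-bimodule on which $K$ acts trivially from both sides, since $Kr\subseteq H$ for every $r$. Indeed, for $r\in R$ and $\kappa\in K$ we have $\kappa r\in K$, so we should use instead that $KR\subseteq K$. Thus it suffices to show $K\subseteq H$ and $H+K=R$. The second of these is condition (2). For the first, $K$ is additively generated by products $asb$ with $s\in S$ and $a$ or $b$ in $K$ (by (1), writing $K=KR=K\,RSR$). Each such product lies in $KH$, or in $H$ after factoring $a=\kappa a'$, because $KRSR\subseteq K H$ once we note that $RSR=\langle\text{units}\cdot S\cdot\text{units}\rangle+(\text{terms with a factor in }K)$. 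I expect the main delicacy to be exactly this bookkeeping: showing $KRSR\subseteq H$ by reducing $a\in R$ to a sum of a unit and an element of $K$. This reduction is valid for every $a$ by \cref{prop:which-element-sum-2}, since $a=(a-1)+1$ and $a-1$ or $a$ lies in $K$ only componentwise, so the reduction has to be done componentwise on $\F_2^r$ using idempotent lifts. Combining $K\subseteq H$ with $H+K=R$ gives $H=R$.
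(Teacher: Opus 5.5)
\textbf{There is a genuine gap in your sufficiency argument: you never prove $K\subseteq H$.}

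Your necessity argument is correct. In the sufficiency part you also assemble the right ingredients:
\begin{enumerate}
\item $KH\subseteq H$ and $HK\subseteq H$, so in particular $KSK\subseteq H$;
\item $H+K=R$, which follows from (2).
\end{enumerate}
Together these correctly reduce everything to showing $K\subseteq H$, and that final step is the one you do not complete.

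Several intermediate claims along the way do not hold up:
\begin{enumerate}
\item The inclusion $aSb\subseteq c_jSc_l+KH+HK+KHK$ is unjustified. A cross term such as $\kappa s c_l$ lies in $KSR$, and nothing you have shown places it in $H$.
\item You have no reason to regard $R/H$ as an $R$-bimodule. $H$ is only known to be stable under multiplication by units and by elements of $K$, not by all of $R$.
\item The claim that $K$ acts trivially on $R/H$ already presupposes $K=K\cdot 1\subseteq H$, which is exactly what you are trying to prove.
\item Your final route writes $K=KRSR$ and tries to prove $KSR\subseteq H$ by splitting the right-hand factor $b\in R$ into a unit plus an element of $K$. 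This fails as soon as $r\geq 2$. Units of $R$ map to $1\in\F_2^r$, so the sums ``unit $+$ element of $K$'' are exactly the elements of $1+K$. An element $b$ lying over an idempotent of $\F_2^r$ other than $0$ and $1$ has no such decomposition. The proposed ``componentwise reduction with idempotent lifts'' is never carried out, and it is not clear how it could be.
\end{enumerate}

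\textbf{The missing idea is to split $1$ rather than the arbitrary factor $b$, so that a factor from $K$ appears on both sides.} Since $H+K=R$, write $1=h+p$ with $h\in H$ and $p\in K$. For $t\in K$ you then have $t=th+tp$, and $th\in KH\subseteq H$. For the other term, insert the relation $1=\sum_k a_kx_kb_k$ coming from (1):
\[ tp=t\cdot 1\cdot p=\sum_k (ta_k)\,x_k\,(b_kp). \]
Here $ta_k$ and $b_kp$ lie in $K$ because $K$ is a two-sided ideal. Hence each term lies in $KSK\subseteq H$, so $K\subseteq H$, and therefore $H=H+K=R$. This is the paper's argument, and it uses only facts you had already established.
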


\begin{proof}
    Suppose that $\Gamma(R,S)$ is connected. The map $\Phi$ induces a graph morphism $\Gamma(R,S) \to \Gamma(\F_2^r, \Phi(S))$. We note that, $\Phi(S)$ may contain $0$. In this case, $\Gamma(\F_2^r, \Phi(S))$ will have a simple loop at each vertex, which does not affect its connectivity. Since $\Gamma(R,S)$ is connected and $\Phi$ is surjective at the vertex level, $\Gamma(\F_2^r, \Phi(S))$ is connected as well.

    Let $H$ be the abelian group generated by $S.$ Then, $H$ is the set of elements of the form $\sum_{i} n_k s_k$, where $n_k \in \Z$ and $s_k \in S.$ We see that $H \subset \sum_{i\in I} I_i.$ Furthermore, since $\Gamma(R,S)$ is connected, $H=R$. This shows that $\sum_{i \in I} I_i =R$ as well.

Conversely, suppose that both of the above conditions are satisfied. Let $r \in R.$ We will show that $r \in H.$ In fact, since $\Gamma(\F_2^r, \Phi(S))$ is connected, we can write $\Phi(r) = \sum_{k} n_k \Phi(s_k),$ where $n_k \in \Z$ and $s_k \in S.$ By definition $(r-\sum_{k} n_ks_k) \in \ker(\Phi).$ Since $\sum_{k} n_k s_k \in H$, it is sufficient to show that $\ker(\Phi) \subset H.$ In fact, let $t \in \ker(\Phi)$. Because $\sum_{i \in I} I_i=R$, we can write  $1 = \sum_{i \in I} \sum_{k} a_{ik} x_i b_{ik},$ where $a_{ik}, b_{ik}  \in R, x_i \in S_i \subset S.$ Furthermore, as we explained previously, since $\Gamma(\F_2^r, \Phi(S))$ is connected, we can also write $1 = p+ \sum_{k} m_k x_k $ where $p \in \ker(\Phi)$, $x_k \in S$ and $m_k \in \Z.$ We then have  $t = \sum_{k} m_k (t x_k) + tp.$
 Since $t \in \ker(\Phi)$, $t$ is a sum of two units; say $t=u+v$ where $u,v \in R^{\times}.$ Then 
 $ tx_k = ux_k +v x_k.$  By definition $ux_k, vx_k \in S$ and hence $tx_k \in H.$ We claim that $tp \in H$ as well. In fact 
\[ tp = t(1)p = \sum_{i \in I} \sum_{k} (ta_{ik}) x_i (b_{ik}p). \]
Since \(\ker(\Phi)\) is a two-sided ideal, both \(ta_{ik}\) and \(b_{ik}p\) belong to \(\ker(\Phi)\). Therefore each of $ta_{ik}$ and $b_{ik}p$ is a sum of two units. Furthermore, since $R^{\times} S R^{\times}=S$, we conclude that $(ta_{ik}) x_i (b_{ik}p)$ is the sum of four elements in $S.$ Indeed, if
\[
ta_{ik}=u_1+u_2,\qquad b_{ik}p=v_1+v_2
\]
with \(u_1,u_2,v_1,v_2\in R^\times\), then
\[
(ta_{ik})x_i(b_{ik}p)
=
\sum_{\alpha,\beta=1}^2 u_\alpha x_i v_\beta,
\]
and each \(u_\alpha x_i v_\beta\in S\). This shows, in particular,  that $(ta_{ik}) x_i (b_{ik}p) \in H$ and therefore $tp \in H$. We conclude that $t \in H$ and hence $H=R.$ In other words, $\Gamma(R,S)$ is connected.  
\end{proof}

Finally, we conclude this section with some discussion of the unit graph (see \cite{MR1041619} for a survey on this topic). This type of graphs and their variants, such as total graphs, have been extensively studied in the literature (see, for example, \cite{anderson2008total, Ashrafi2010}). We recall that the unit graph $U_{R}$ is the graph whose vertex set is $R$. Additionally, two vertices $a,b$ are adjacent if $a+b \in R^{\times}.$ By a direct analogy with gcd-graphs, we can define the notion of a total gcd-graph as follows. 

\begin{definition}
    Let $S$ be a set which is stable under the left and right actions of $R^{\times}.$ The total gcd-graph of $R$ with respect to $S$, denoted by $T(R,S)$ is the graph whose vertex set is $R$. Furthermore, $a$ and $b$ are adjacent if and only if $a+b \in S.$
\end{definition}

While it is not true in general that $T(R,S)$ and $\Gamma(R,S)$ are isomorphic (see \cite{Huadong, shekarriz2012total} for some study on the relationship between these two graphs), the following statement holds. 

\begin{prop} 
The following conditions are equivalent. 
\begin{enumerate}
\item  $\Gamma(R,S)$ is connected. 
\item $S$ additively generates $R.$
\item $T(R,S)$ is connected. 
\end{enumerate}

\end{prop}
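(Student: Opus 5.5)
The plan is to prove $(1)\iff(2)$ and $(2)\iff(3)$ separately, using that $S$ is stable under the left and right actions of $R^{\times}$. In particular $-1\in R^{\times}$ gives $-S=S$.

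For $(1)\iff(2)$ I would argue exactly as in \cite{maimani2010rings}. Two vertices $a,b$ of $\Gamma(R,S)$ are adjacent iff $a-b\in S$. Hence the connected component of $0$ is the set of finite sums $\pm s_1\pm\cdots\pm s_k$ with $s_j\in S$, which is the additive subgroup $H$ generated by $S$; since $-S=S$ these are in fact plain sums of elements of $S$. The graph is vertex-transitive via translations $x\mapsto x+c$, so it is connected iff the component of $0$ is all of $R$, i.e. iff $H=R$.

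For $(2)\iff(3)$ the key is that a walk in $T(R,S)$ alternates signs. If $a_0,a_1,\dots,a_k$ is a walk, then $a_{j}+a_{j+1}=s_j\in S$, so $a_{j+1}=s_j-a_j$. Inductively,
\[
a_k=(-1)^k a_0+\sum_{j}(\pm s_j).
\]
Thus every vertex reachable from $0$ lies in $H$. So if $T(R,S)$ is connected, every $x\in R$ lies in $H$, which gives $(3)\Rightarrow(2)$.

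For $(2)\Rightarrow(3)$ I will show that every $x$ is joined to $0$. The step I expect to need care is moving by an arbitrary $s\in S$, since edges change $a$ into $s'-a$ rather than $a+s$. The fix is a two-step walk: take $a\to s'-a\to s-(s'-a)=a+(s-s')$, which shows $a$ is connected to $a+s-s'$ for all $s,s'\in S$. To get plain translates I would use $-S=S$. Taking $s'=-t$ with $t\in S$ shows $a$ is connected to $a+s+t$, so each component is closed under adding sums of two elements of $S$, i.e. under the subgroup $H_2\subseteq H$ generated by $\{s+t: s,t\in S\}$. Then I split into two cases.

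\textbf{Case $H_2=H$.} Then $0$ is connected to every element of $H=R$, and we are done.

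\textbf{Case $H_2\neq H$.} Then $H_2$ has index $2$ in $H$: it contains $2s$ and $s-t=s+(-t)$, so every element of $S$ lies in the single coset $s_0+H_2$. Since $0$ is adjacent to $s_0$ (as $0+s_0\in S$), the component of $0$ contains $H_2\cup(s_0+H_2)=H=R$. Either way $T(R,S)$ is connected, since $H=R$ by (2). Loops (where $2a\in S$) do not affect connectivity.
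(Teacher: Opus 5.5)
Your proof is correct. The paper gives no argument of its own here: it calls $(1)\iff(2)$ well known and refers $(2)\iff(3)$ to the argument of \cite[Theorem 4.1]{Alluqmani2026} for unit graphs, so your proposal supplies a self-contained proof.

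Your $(3)\Rightarrow(2)$, that every walk from $0$ ends in $H$, is the natural argument. For $(2)\Rightarrow(3)$ there is a shorter route that avoids $H_2$ and the case split. Write $x=s_1+\cdots+s_n$ with $s_i\in S$, and put $t_k=(-1)^{n-k}s_k$, which lies in $S$ because $-S=S$. Set $a_0=0$ and $a_k=t_k-a_{k-1}$. Consecutive vertices then sum to $t_k\in S$, and $a_n=\sum_k(-1)^{n-k}t_k=x$, so $x$ is reached from $0$ directly.

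What your route buys in exchange is a description of every component of $T(R,S)$, without assuming (2). Your two-step moves show that the component of $a$ contains $a+H_2$ and $s_0-a+H_2$. Your parity computation shows a walk of length $k$ from $a$ ends in $(-1)^ka+(\text{a sum of }k\text{ elements of }S)$, hence in $a+H_2$ or $s_0-a+H_2$. So the component of $a$ is exactly $(a+H_2)\cup(s_0-a+H_2)$.

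A small simplification: your two cases merge. Since $2s_0\in H_2$ and $S\subseteq s_0+H_2$, the set $H_2\cup(s_0+H_2)$ is always a subgroup containing $S$, hence it equals $H$.
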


\begin{proof}
    The equivalence between $(1)$ and $(2)$ is well-known. The equivalence between $(2)$ and $(3)$ follows from an identical argument given in \cite[Theorem 4.1]{Alluqmani2026}. 
\end{proof}
\section{Sums of normalized units} \label{sec:normalized_units}
In this section, we study the $n$-sum property of $R$ with respect to a general subset of $R.$ In particular, we investigate the case where these subsets have arithmetic origins. To do so, we introduce the following formal definition.

\begin{definition}
    Let $S$ be a subset of $R.$ We say that $R$ has the $n$-sum property with respect to $S$ if every element of $R$ can be written as a sum of exactly $n$ elements in $S.$
\end{definition}

The case $S=R^{\times}$ is precisely  Skornyakov's question. On the other hand, if $S=(R^{\times})^k$ where $k$ is a positive integer, our definition is directly related to the Waring problem (see \cite{podesta2021_finitefield, podesta2025GP, podesta_new} for some work on this topic where the authors also approach this problem via graph theory). We remark that if $S \subset S'$ and $R$ has the $n$-sum property with respect to $S$, then $R$ also has the $n$-sum property with respect to $S'.$ In particular, if $S \subset R^{\times}$, then $R$ has the $2$-sum property with respect to $S$ only if $R$ has no quotient of the form $\F_2.$ For the rest of this article, we will denote by $U$ a subgroup of $R^{\times}.$ Since we only deal with undirected graphs, we will also assume that $-1 \in U.$

We first discuss a general result that studies the $n$-sum property with respect to a quotient map. 
\begin{prop} \label{prop:infinitesiamal-quotient}
    Let $R$ be a finite ring, and let $R'$ be a quotient of $R$ equipped with a quotient map $\Phi\colon R \to R'.$  Let $U$ be a subgroup of $(R')^{\times}$ and $U_{\Phi} =\{r \in R^{\times} \mid \Phi(r) \in U \}.$ Suppose that $\ker(\Phi) \subset \Rad(R)$. Then $R$ has the $n$-sum property with respect to $U_{\Phi}$ if and only if $R'$ has the $n$-sum property with respect to $U.$
\end{prop}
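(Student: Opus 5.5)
The two directions are handled separately; both rest on the fact that units lift along $\Phi$ because $\ker(\Phi)\subset \Rad(R)$.

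\emph{Forward direction.} Assume $R$ has the $n$-sum property with respect to $U_{\Phi}$, and let $y \in R'$. Since $\Phi$ is surjective, pick a lift $x \in R$ with $\Phi(x)=y$ and write $x = u_1+\cdots+u_n$ with $u_i \in U_{\Phi}$. Applying $\Phi$ gives $y=\Phi(u_1)+\cdots+\Phi(u_n)$, and $\Phi(u_i)\in U$ by the definition of $U_{\Phi}$. So $R'$ has the $n$-sum property with respect to $U$. This direction does not use the radical hypothesis.

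\emph{Converse.} Assume $R'$ has the $n$-sum property with respect to $U$, and let $x\in R$. Write $\Phi(x) = v_1+\cdots+v_n$ with $v_i \in U$, and choose arbitrary lifts $\widehat{v}_i\in R$. The key step is that each $\widehat{v}_i$ is a unit of $R$. To see this, take a lift $\widehat{w}$ of $v_i^{-1}$. Then $\widehat{v}_i\widehat{w} = 1+m$ with $m\in\ker(\Phi)\subset \Rad(R)$, and $1+m$ is invertible because $m$ lies in the Jacobson radical. Hence $\widehat{v}_i$ is right-invertible, and therefore invertible since $R$ is finite (\cite[Proposition 2.1]{perfect-unitary}, as in the infinitesimal-extension proposition). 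This argument replaces nilpotency of the kernel with the radical property, which suffices. Since $\Phi(\widehat{v}_i)=v_i\in U$, we get $\widehat{v}_i\in U_{\Phi}$.

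\emph{Absorbing the error term.} Set $m = x - \sum_i \widehat{v}_i \in \ker(\Phi)$, and replace $\widehat{v}_n$ by $\widehat{v}_n + m$. Then $\Phi(\widehat{v}_n+m)=v_n\in U$. Moreover $\widehat{v}_n+m = \widehat{v}_n(1+\widehat{v}_n^{-1}m)$, and $\widehat{v}_n^{-1}m\in\Rad(R)$ because $\Rad(R)$ is an ideal, so $1+\widehat{v}_n^{-1}m$ is a unit. Thus $\widehat{v}_n+m\in U_{\Phi}$, and
\[ x=\widehat{v}_1+\cdots+\widehat{v}_{n-1}+(\widehat{v}_n+m) \]
is a sum of exactly $n$ elements of $U_{\Phi}$, exactly as in the proof of \cref{prop:which-element-sum-2}.

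The only potential obstacle is confirming that $U_{\Phi}$ is exactly the set of all lifts of elements of $U$, i.e.\ that every lift of a unit is a unit under the weaker hypothesis $\ker(\Phi)\subset\Rad(R)$. This follows from the standard fact that $1+\Rad(R)\subset R^{\times}$, together with finiteness of $R$ to pass from right-invertible to invertible.
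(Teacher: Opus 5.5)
Your proof is correct and follows essentially the same route as the paper. The forward direction pushes a decomposition of a lift through $\Phi$, and the converse shows that every lift of a unit is a unit, using $1+\Rad(R)\subset R^{\times}$ together with finiteness, so that $U_{\Phi}=\Phi^{-1}(U)$; the kernel error term is then absorbed into a single summand (the paper uses the first summand, you use the last).
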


\begin{proof}
    Suppose that $R$ has the $n$-sum property with respect to $U_{\Phi}.$ Let $u \in R'$ and $\widehat{u}$ be a lift of $u$ to $R.$ Since $R$ has the $n$-sum property with respect to $U_{\Phi}$ we can write $\widehat{u} = \sum_{i=1}^n x_i$ where $x_i  \in U_{\Phi}.$ We then have $u = \Phi(\widehat{u})= \sum_{i=1}^n \Phi(x_i).$ 
    By definition, $\Phi(x_i) \in U.$ We conclude that $R'$ has the $n$-sum property with respect to $U.$

    Conversely, suppose that $R'$ has the $n$-sum property with respect to $U.$ We will show that $R$ also has the $n$-sum property with respect to $U_{\Phi}$.

    First, we claim that if $y \in (R')^{\times}$ and $\widehat{y}$ is a lift of $y$ then $\widehat{y} \in R^{\times}.$ In fact, let $t$ be the inverse of $y$; namely $yt=ty=1.$ Let $\widehat{t}$ be a lift of $t.$ Then $\widehat{y} \widehat{t}=1+a$ for some $a \in \ker(\Phi).$ By our assumption, $\ker(\Phi) \subset \Rad(R)$, so $1+a \in R^{\times}.$ This implies that $\widehat{y} \in R^{\times}$ as well. As a consequence of this observation, we conclude that $U_{\Phi}= \Phi^{-1}(U).$

    Now, let $v \in R$, then $\Phi(v)=\sum_{i=1}^n y_i$ for $y_i \in U.$ We then have $v =m + \sum_{i=1}^n \widehat{y_i} = (m+\widehat{y_1}) + \sum_{i=2}^n \widehat{y_i} $ where $\widehat{y_i}$ is a lift of $y_i$ in $R$ and $m \in \ker(\Phi)$. Since $U_{\Phi}=\Phi^{-1}(U)$, we know that $\widehat{y_1}+m  \in U_{\Phi} $ and $\widehat{y_i} \in U_{\Phi}$ for $2 \leq i \leq n.$  We conclude that $v$ can be written as the sum of $n$ elements in $U_{\Phi}$. We conclude that $R$ has the $n$-sum property with respect to $U_{\Phi}.$
\end{proof}
We discuss below some partial results for standard choices of $R$ and $U$ in the literature. 

\subsection{Sums of normalized units in matrix rings}
First, we deal with the case of matrix rings with coefficients in a finite field. 

\begin{prop}
Let $F$ be a field and $n \geq 2.$ Let $R=M_n(F)$ and $U= \pm SL_n(F)$ where $SL_n(F)=\{A \in M_n(F) \mid \det(A)=1\}.$ Then $R$ has the $2$-sum property with respect to $U.$
\end{prop}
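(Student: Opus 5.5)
The plan is to reduce to diagonal matrices using the two-sided action of $SL_n(F)$, and then to decompose diagonal matrices explicitly, block by block. The key observation is that the set $U=\pm SL_n(F)$ is stable under left and right multiplication by $SL_n(F)$. Hence if $D=X+Y$ with $X,Y\in U$ and $P,Q\in SL_n(F)$, then $PDQ=PXQ+PYQ$ with $PXQ,PYQ\in U$. So it suffices to prove the statement for one representative of each double coset $SL_n(F)\backslash M_n(F)/SL_n(F)$.

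\textbf{Step 1 (reduction to diagonal matrices).} By Gaussian elimination, any $A\in M_n(F)$ can be written as $A=PDQ$ with $P,Q\in GL_n(F)$ and $D=\mathrm{diag}(I_r,0_{n-r})$. Write $P=P'\,\mathrm{diag}(\det P,1,\ldots,1)$ and $Q=\mathrm{diag}(1,\ldots,1,\det Q)\,Q'$ with $P',Q'\in SL_n(F)$. Then $A=P'D'Q'$, where $D'$ is again a diagonal matrix, obtained from $D$ by rescaling two diagonal entries. The exact shape of $D'$ is irrelevant: we will show that \emph{every} diagonal matrix $D=\mathrm{diag}(d_1,\ldots,d_n)$ is a sum of two elements of $U$. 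This also covers $A=0$ and $F=\F_2$ without any special treatment.

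\textbf{Step 2 (block decomposition).} Split $D$ into diagonal blocks of size $2$ when $n$ is even. When $n$ is odd, use blocks of size $2$ together with a single block of size $3$; this is possible since $n\ge 2$.
\begin{itemize}
\item For a $2\times 2$ block $\mathrm{diag}(a,b)$, take
\[ X_2=\begin{pmatrix} a & 1\\ -1 & 0\end{pmatrix},\qquad Y_2=\begin{pmatrix} 0 & -1\\ 1 & b\end{pmatrix}. \]
Both have determinant $1$ for every $a,b\in F$, and $X_2+Y_2=\mathrm{diag}(a,b)$.
\item For a $3\times 3$ block $\mathrm{diag}(a,b,c)$, take
\[ X_3=\begin{pmatrix} a&1&0\\0&0&1\\1&0&0\end{pmatrix},\qquad Y_3=\begin{pmatrix} 0&-1&0\\0&b&-1\\-1&0&c\end{pmatrix}. \]
Then $X_3+Y_3=\mathrm{diag}(a,b,c)$. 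Expanding along the first row gives $\det X_3=1$ and $\det Y_3=-1$, independently of $a,b,c$.
\end{itemize}

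\textbf{Step 3 (assembly and sign bookkeeping).} Let $X$ and $Y$ be the block-diagonal matrices assembled from the $X$-blocks and the $Y$-blocks respectively, so that $D=X+Y$ and $\det X=1$.
\begin{itemize}
\item If $n$ is even, only $2\times 2$ blocks occur, so $\det Y=1$ as well.
\item If $n$ is odd, $\det Y=-1$. Since $n$ is odd, $\det(-Y)=(-1)^n\det Y=1$, so $-Y\in SL_n(F)$ and hence $Y\in -SL_n(F)\subset U$.
\end{itemize}
Combining with Step 1, every $A\in M_n(F)$ is a sum of two elements of $U$, which is the $2$-sum property with respect to $U$.

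The main obstacle is getting the determinant signs right in the odd case. When $n$ is even, $\pm SL_n(F)=SL_n(F)$, so both summands must have determinant exactly $1$; when $n$ is odd, determinant $-1$ is also allowed. The choice of using only $2\times 2$ blocks for even $n$ and exactly one $3\times 3$ block for odd $n$ is designed so that the determinants match these constraints. The $3\times 3$ determinants are the one place where I would double-check the computation explicitly.
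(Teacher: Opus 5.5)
Your proof is correct. The identities $\det X_2=\det Y_2=\det X_3=1$, $\det Y_3=-1$ and both block sums check out, and the construction works over any field, including $\F_2$.

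The paper follows the same overall pattern: Gaussian elimination, then explicit matrices whose determinant is controlled by a zero on the diagonal. It organizes the details differently. It keeps $P,Q\in GL_n(F)$ and reduces to $I_r\oplus 0_{n-r}$. It must then find $A'-B'=I_r\oplus 0_{n-r}$ with $\det A'=\det B'=\alpha=\det P\det Q$. It does this with one global $n\times n$ cyclic matrix, using $\det = x_1\cdots x_n+(-1)^{n+1}a_1\cdots a_n$, tuning the single entry $a_1=(-1)^{n+1}\alpha$, and making separate choices for $r=n$ and $r<n$.

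You instead normalize $P,Q$ into $SL_n(F)$ and absorb $\det P$ and $\det Q$ into the diagonal. You therefore have to handle arbitrary diagonal matrices, which your blocks do because their determinants do not depend on the diagonal entries. In fact your $2\times 2$ and $3\times 3$ blocks are small instances of the paper's cyclic shape, each with a zero diagonal entry that kills the product term. The trade-off is that you replace the rank case split and the determinant tuning with a parity split on $n$ and a sign check, and you only have to verify $2\times 2$ and $3\times 3$ determinants.

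Your construction also gives the paper's stronger remark. In both parities $\det(-Y)=(-1)^n\det Y=1$, so $D=X-(-Y)$ is a difference of two elements of $SL_n(F)$, and this survives the $SL_n(F)$-transport in your Step 1.
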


\begin{proof}
Let \(C\in M_n(F)\). We claim there exist \(A,B\in SL_n(F)\) such that
\[
C=A-B.
\]

Let \(r=\operatorname{rank}(C)\). By the theory of Gauss elimination, there exist \(P,Q\in GL_n(F)\) with
\[
PCQ=\begin{bmatrix}I_r & 0\\[4pt]0 & 0\end{bmatrix}=I_r\oplus 0_{n-r}.
\]
Write \(C':=PCQ\). It suffices to write \(C'=A'-B'\) with \(\det(A')=\det(B')= \alpha\) where $\alpha = \det(P) \det(Q)$, since then
\[
C=P^{-1}A'Q^{-1}-P^{-1}B'Q^{-1}
\]
and \(P^{-1}A'Q^{-1},P^{-1}B'Q^{-1}\in SL_n(F)\).

Our approach is based on the following determinant identity for an \(n\times n\) matrix of the cyclic form
\[
\det\begin{bmatrix}
x_1 & a_1 &  &  &  \\
 & x_2 & a_2 &  &  \\
 &  & \ddots & \ddots & \\
 &  &  & x_{n-1} & a_{n-1} \\
a_n &  &  &  & x_n
\end{bmatrix}
= x_1x_2\cdots x_n + (-1)^{\,n+1}a_1a_2\cdots a_n.
\]
In this matrix, every other entry is $0.$ We construct matrices \(A'\) and \(B'\) of the same cyclic type so that \(A'-B'=I_r\oplus 0_{n-r}\). Concretely, choose entries \(x_i\) and \(a_i\) and set

\[
A'=
\begin{bmatrix}
1+x_1 & a_1 & 0 & \cdots & 0 & 0 & \cdots & 0\\
0 & 1+x_2 & a_2 & \cdots & 0 & 0 & \cdots & 0\\
\vdots & & \ddots & \ddots & \vdots & \vdots & & \vdots\\
0 & 0 & \cdots & 1+x_r & a_r & 0 & \cdots & 0\\
0 & 0 & \cdots & 0 & 0 & a_{r+1} & \cdots & 0\\
\vdots & \vdots & & \vdots & \vdots & \vdots & \ddots & \vdots\\
0 & 0 & \cdots & 0 & 0 & 0 & \cdots  & a_{n-1} \\
a_n & 0 & \cdots & 0 & 0 & 0 & \cdots  & 0 \\
\end{bmatrix},
\]

where the first $r$ diagonal entries are \(1+x_1,\dots,1+x_r\) and the remaining $(n-r)$ diagonal entries are 0. Similarly, define  

\[
B'=
\begin{bmatrix}
x_1 & a_1 & 0 & \cdots & 0 & 0 & \cdots & 0\\
0 & x_2 & a_2 & \cdots & 0 & 0 & \cdots & 0\\
\vdots & & \ddots & \ddots & \vdots & \vdots & & \vdots\\
0 & 0 & \cdots & x_r & a_r & 0 & \cdots & 0\\
0 & 0 & \cdots & 0 & 0 & a_{r+1} & \cdots & 0\\
\vdots & \vdots & & \vdots & \vdots & \vdots & \ddots & \vdots\\
0 & 0 & \cdots & 0 & 0 & 0 & \cdots  & a_{n-1} \\
a_n & 0 & \cdots & 0 & 0 & 0 & \cdots  & 0 \\
\end{bmatrix}.
\]
Then $A'-B' =I_r\oplus 0_{n-r}. $ We consider two cases. \\ 

\textbf{Case 1}: \(r=n\). Choose
\[
x_1=-1,\quad x_2=\cdots=x_n=0,\quad a_1=(-1)^{\,n+1}\alpha,\quad a_2=\cdots=a_n=1,
\]
Because $x_1+1=0$
\[
\det(A')= (1+x_1) \cdots(1+x_n)+\;(-1)^{\,n+1}a_1\cdots a_n=\alpha,
\]
Similarly $\det(B')=\alpha$. 

For example, when \(n=3\),
\[
I_3=\begin{bmatrix}0&\alpha&0\\[2pt]0&1&1\\[2pt]1&0&1\end{bmatrix}
-\begin{bmatrix}-1&\alpha&0\\[2pt]0&0&1\\[2pt]1&0&0\end{bmatrix}.
\]

Case 2: \(r<n\). Take \(x_{1}=x_2=\cdots=x_n=0\), and set
\[
a_1=(-1)^{\,n+1}\alpha,\qquad a_2=\cdots=a_n=1.
\]
Then 
\[
\det(A')=(1+x_1)\cdots(1+x_r)\cdot 0+\;(-1)^{\,n+1}a_1\cdots a_n=\alpha,
\]
\[
\det(B')=x_1\cdots x_r\cdot 0+\;(-1)^{\,n+1}a_1\cdots a_n=\alpha,
\]
For example, for $n=3$ and $r=2$, we have

\[
I_2 \oplus 0_1=\begin{bmatrix}1& \alpha& 0\\ 0&1&1\\ 1&0&0 \end{bmatrix}
-\begin{bmatrix}0&\alpha&0\\[2pt]0&0&1\\[2pt]1&0&0\end{bmatrix}.
\qedhere
\]
\end{proof}

We remark that we actually proved a stronger result: every matrix is a difference of two elements of $SL_n(F)$.
By the same argument as in \cref{prop:sum-units}, we have the following corollary. 

\begin{cor}
Let $n \geq 2$ and $F$ be a finite field. Then 
    $\sum_{u \in SL_n(F)} u =0. $
\end{cor}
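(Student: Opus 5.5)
The plan is to imitate the proof of \cref{prop:sum-units}, replacing the left action of $R^{\times}$ by the left action of $SL_n(F)$. Set $T=\sum_{u\in SL_n(F)} u \in M_n(F)$. Since $SL_n(F)$ is a group, left multiplication by any $A\in SL_n(F)$ permutes $SL_n(F)$. Hence
\[ AT=\sum_{u\in SL_n(F)} Au = T \]
for every $A\in SL_n(F)$.

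Next take two elements $A,B\in SL_n(F)$ and subtract the two identities $AT=T$ and $BT=T$. This gives $(A-B)T=0$.

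The key input is the remark following the preceding proposition: every matrix $C\in M_n(F)$ is a difference $A-B$ with $A,B\in SL_n(F)$. That proof already produced $\det(A)=\det(B)=1$, not merely $A,B\in\pm SL_n(F)$. Apply this to $C=I_n$ to get $A,B\in SL_n(F)$ with $A-B=I_n$. Then
\[ T=I_n T=(A-B)T=0. \]

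The only delicate point is to use the strengthened ``difference of two elements of $SL_n(F)$'' version rather than the statement with $U=\pm SL_n(F)$. If one only had $I_n=A-B$ with $A\in SL_n(F)$ and $B\in -SL_n(F)$, the argument would instead give $T=-BT$ with $-B\in SL_n(F)$. That only yields $(A+B')T=0$ for some $B'\in SL_n(F)$, which does not force $T=0$ directly.

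Taking $C=I_n$ in Case 1 of that construction gives explicit $A',B'$ with determinant $\alpha=1$, since $P=Q=I_n$. So no further obstacle remains. The hypothesis that $F$ is finite is needed only so that the sum is finite.
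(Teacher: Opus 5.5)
Your proposal is correct and follows the paper's own route. The paper deduces the corollary from its remark that every matrix is a difference of two elements of $SL_n(F)$, together with the left-invariance argument of \cref{prop:sum-units}. That is exactly your use of $AT=T$ and $I_n=A-B$ with $A,B\in SL_n(F)$, including the check that Case~1 with $C=I_n$ (so $\alpha=1$) gives determinant-one matrices rather than merely elements of $\pm SL_n(F)$.
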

\subsection{Sums of units in a group ring}
Let $F$ be a finite field and $U \subset F^{\times}$ such that $F$ has the $n$-sum property with respect to $U.$ This necessarily implies that $F \neq \F_2$ and $n \geq 2$. We will make this assumption throughout this section.

Let $G$ be a finite group and $R=F[G]$ be the group algebra of $G$ with coefficients in $F$. Let $\varepsilon_{G}\colon F[G] \to F$ be the augmentation map and $U_{\varepsilon_G}= \{r \in F[G]^{\times} \mid \varepsilon_G(r) \in U\}.$  In this section, we study the following question.

\begin{question} \label{question:F[G]}
  Suppose that $F$ has the $n$-sum property with respect to $U.$  Is it true that $F[G]$ also has the $n$-sum property with respect to $U_{\varepsilon_G}$? 
\end{question}
\begin{rem}
We can easily show that $F[G]$ always has the $N$-sum property with respect to $U_{\varepsilon_G}$ where $N= n|G|$.  In fact, a basic element of $F[G]$ is of the form $\sum_{g} a_g g.$ We can write each $a_g$ as the sum of $n$ terms in $U.$ Each of these terms when multiplied by $g$ is an element of $U_{\varepsilon_G}. $ \cref{question:F[G]} asks for a stronger statement that $F[G]$ has the $n$-sum property. 
\end{rem}

First, we deal with the easy case where $|G|$ is a $p$-group with $p =\text{char}(F)$. In this case the answer to \cref{question:F[G]} is affirmative. 
\begin{prop} \label{prop:p-group}
Suppose that $G$ is a $p$-group with $p = \text{char}(F).$ Then $F[G]$ has the $n$-sum property  with respect to $U_{\varepsilon_G}$. 
\end{prop}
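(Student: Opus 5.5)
The plan is to apply \cref{prop:infinitesiamal-quotient} with $R=F[G]$, $R'=F$, $\Phi=\varepsilon_G$, and the subgroup $U\subset F^{\times}$. The conclusion of that proposition is exactly what we want, provided its one hypothesis holds:
\[ \ker(\varepsilon_G)\subset \Rad(F[G]). \]
For then $U_{\Phi}=U_{\varepsilon_G}$ by definition. Since $F$ has the $n$-sum property with respect to $U$ by our standing assumption, $F[G]$ has the $n$-sum property with respect to $U_{\varepsilon_G}$.

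The main step is therefore to show that the augmentation ideal $\Delta=\ker(\varepsilon_G)$ is nilpotent when $G$ is a $p$-group and $p=\text{char}(F)$. A nilpotent two-sided ideal lies in the Jacobson radical, because $1+m$ is invertible for every nilpotent $m$.

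I would prove nilpotency by induction on $|G|$. The case $G=1$ is trivial, since then $\Delta=0$. For $G\neq 1$, choose a central element $z\in G$ of order $p$, which exists because $p$-groups have nontrivial center. Then $(z-1)^p=z^p-1=0$ in characteristic $p$. Since $z$ is central, the ideal $J=(z-1)F[G]$ is two-sided and satisfies $J^p=(z-1)^pF[G]=0$.

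Now consider the quotient $F[G]\to F[G/\langle z\rangle]$. Its kernel is $J$, and it maps $\Delta$ onto the augmentation ideal of $F[G/\langle z\rangle]$. By induction that ideal is nilpotent, say its $k$-th power vanishes. Hence $\Delta^k\subset J$, and so $\Delta^{kp}=0$.

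As an alternative, one could avoid the induction: $F[G]/\Rad(F[G])$ is semisimple, and the trivial module is its only simple module, since a $p$-group acting on a nonzero $\F_p$-space fixes a nonzero vector. This forces $\Rad(F[G])=\Delta$. The nilpotency argument above is the more self-contained route, so that is the one I would write.

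Once $\Delta\subset\Rad(F[G])$ is established, the remaining step is just to invoke \cref{prop:infinitesiamal-quotient}. I expect no real obstacle beyond the nilpotency of $\Delta$.
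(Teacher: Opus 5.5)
Your proposal is correct and follows the paper's proof: the paper also applies \cref{prop:infinitesiamal-quotient} to the augmentation map $\varepsilon_G\colon F[G]\to F$, with the key input $\ker(\varepsilon_G)\subset\Rad(F[G])$. The only difference is that the paper quotes the well-known equality $\Rad(F[G])=\Delta(G)$, whereas you prove the needed inclusion yourself, by showing that the augmentation ideal is nilpotent via induction using a central element of order $p$.
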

\begin{proof}
    It is well-known that if $G$ is a $p$-group and $p=\text{char}(F)$ then $\Rad(F[G]) = \Delta(G)$ where $\Delta(G)=\ker(\varepsilon_{G}).$ Since $F[G]/\Delta(G) \cong F$ has the $n$-sum property with respect to $U$, by \cref{prop:infinitesiamal-quotient}, $F[G]$ has the $n$-sum property with respect to $U_{\varepsilon_{G}}.$
\end{proof}

Next, we study \cref{question:F[G]} in the other direction: namely, when $F[G]$ is semisimple or close to being semisimple. To do so, we need to introduce some terminology.  Let $H$ be a normal subgroup of $G.$ We will denote by $\varepsilon_{G,H}$ the augmentation map $F[G] \to F[G/H].$ The kernel of $\varepsilon_{G,H}$ will be denoted by $\Delta(G,H).$ 

\begin{prop} \label{prop:quotient-F[G]-semisimple}
    Suppose that $H \vartriangleleft G$ and $|H|$ is invertible in $F.$ If $F[G/H]$ has the $n$-sum property with respect to $U_{\varepsilon_{G/H}}$, then $F[G]$ has the $n$-sum property with respect to $U_{\varepsilon_{G}}.$ 
\end{prop}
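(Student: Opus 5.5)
When $|H|$ is invertible in $F$, the element $e_H = \frac{1}{|H|}\sum_{h\in H} h$ is a central idempotent of $F[G]$, because $H$ is normal. This gives a ring decomposition
\[
F[G] \cong F[G]e_H \times F[G](1-e_H),
\]
and the first factor is identified with $F[G/H]$ via $\varepsilon_{G,H}$. Under this identification the second factor is exactly $\Delta(G,H)$, viewed as a ring with identity $1-e_H$. The augmentation $\varepsilon_G$ factors as $\varepsilon_{G/H}\circ\varepsilon_{G,H}$, so it sees only the first component. A unit of $F[G]$ is a pair (unit of $F[G/H]$, unit of $\Delta(G,H)$), and it lies in $U_{\varepsilon_G}$ if and only if its first component lies in $U_{\varepsilon_{G/H}}$. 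There is no constraint on the second component. So the plan is to write an arbitrary $x=(x_1,x_2)$ as a sum of $n$ such pairs.

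The first component is handled directly by hypothesis: $x_1=\sum_{i=1}^n y_i$ with $y_i\in U_{\varepsilon_{G/H}}$. For the second component I need $x_2=\sum_{i=1}^n z_i$ with each $z_i$ a unit of the ring $B:=F[G](1-e_H)$. Since $n\ge 2$, it is enough to show that $B$ has the $2$-sum property and then pad with pairs of units summing to zero. For even $n-2$ I add $(z,-z)$ pairs. For odd $n$ I instead write $x_2$ as a sum of three units: write $x_2-1=u+v$ with $u,v$ units, so $x_2=1+u+v$, and again pad with $(z,-z)$ pairs. So the $2$-sum property of $B$ gives the $m$-sum property of $B$ for every $m\ge 2$.

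By \cref{thm:main-conditions}, the $2$-sum property for $B$ reduces to showing that $B$ has no quotient $\F_2$. Since $F\neq\F_2$ is a standing assumption, and $B$ is a semisimple-or-not finite $F$-algebra, any ring map $B\to\F_2$ would restrict to a unital ring map from the field $F$ (embedded as $F(1-e_H)$) to $\F_2$. Such a map would be injective, which forces $|F|\le 2$, a contradiction. Hence $B$ has the $2$-sum property. This needs $B\ne 0$; if $H$ is trivial, $B=0$ and there is nothing to prove.

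Combining the two components, $x=\sum_{i=1}^n (y_i,z_i)$ with each $(y_i,z_i)\in U_{\varepsilon_G}$. The main point to check carefully is the identification of the units and of $\varepsilon_G$ across the product decomposition. In particular I must confirm that $\varepsilon_G$ vanishes on $F[G](1-e_H)$; this holds because $\varepsilon_G(e_H)=1$. I also need the unit of $F[G]$ to correspond to a unit in each factor, which is standard for a product of rings.
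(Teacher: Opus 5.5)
Your proof is correct and follows the paper's argument: the central idempotent $e_H$ splits $F[G]$ as $F[G/H]\times\Delta(G,H)$ with $U_{\varepsilon_G}=U_{\varepsilon_{G/H}}\times\Delta(G,H)^{\times}$, and the second factor is handled through its $2$-sum property. The only difference is in how you get that $2$-sum property: you apply \cref{thm:main-conditions} directly, using the unital embedding $F\hookrightarrow F[G](1-e_H)$, and you spell out the passage from the $2$-sum to the $n$-sum property; the paper instead deduces it from \cref{cor:F[G]} and \cref{cor:2-property-quotient} applied to the projection onto the factor.
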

\begin{proof}
    Let $e_H = \frac{1}{|H|} \sum_{h \in H}h.$ As explained  in \cite[Proposition 3.6.7]{GR}, $e_H$ is a central idempotent of $F[G]$ and we have the following isomorphism of rings 
    \[ F[G] \cong F[G]e_{H} \oplus F[G] (1-e_H).\]
    Furthermore, $F[G]e_H \cong F[G/H]$ and $F[G](1-e_H) \cong \Delta(G,H).$ Under this isomorphism, the restriction of the augmentation map $\varepsilon_{G}$ on $F[G/H]$ is nothing but $\varepsilon_{G/H}.$ On the other hand, the restriction of $\varepsilon_G$ on $\Delta(G,H)$ is $0.$ We then see that under this decomposition 
    \[ U_{\varepsilon_G} = U_{\varepsilon_{G/H}} \oplus \Delta(G,H)^{\times}.\]
    By \cref{cor:F[G]}, $F[G]$ has the $2$-sum property with respect to $F[G]^{\times}$. The projection \(F[G]\to \Delta(G,H)\) is a unital ring map onto the direct factor with identity \(1-e_H\). Hence, by \cref{cor:2-property-quotient}, $\Delta(G,H)$ also has the $2$-sum property with respect to $\Delta(G,H)^{\times}$. Since $n \geq 2$, $\Delta(G,H)$ also has the $n$-sum property with respect to $\Delta(G,H)^{\times}$ as well. By our assumption, $F[G/H]$ has the $n$-sum property with respect to $U_{\varepsilon_{G/H}}$. We conclude that $F[G]$ has the $n$-sum property with respect to $U_{\varepsilon_{G}}.$
\end{proof}
We discuss some corollaries of \cref{prop:quotient-F[G]-semisimple}. 

\begin{cor} \label{cor:semisimple}
    Suppose that $|G|$ is invertible in $F$. Then $F[G]$ has the $n$-sum property with respect to $U_{\varepsilon_G}.$
\end{cor}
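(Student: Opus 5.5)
The plan is to apply \cref{prop:quotient-F[G]-semisimple} with the largest possible normal subgroup, namely $H=G$. Since $|G|$ is invertible in $F$, the hypothesis $|H|$ invertible in $F$ holds for $H=G$, which is trivially normal in $G$. The quotient $G/H$ is then the trivial group, so $F[G/H]\cong F$. Under this identification, the augmentation map $\varepsilon_{G/H}\colon F[G/H]\to F$ is the identity, and we get
\[
U_{\varepsilon_{G/H}}=\{r\in F^{\times}\mid r\in U\}=U .
\]

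By the standing assumption of this subsection, $F$ has the $n$-sum property with respect to $U$. Hence $F[G/H]$ has the $n$-sum property with respect to $U_{\varepsilon_{G/H}}$, and \cref{prop:quotient-F[G]-semisimple} immediately gives that $F[G]$ has the $n$-sum property with respect to $U_{\varepsilon_G}$.

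The only point requiring care is checking that the decomposition in the proof of \cref{prop:quotient-F[G]-semisimple} behaves correctly in this degenerate case. Here $e_G=\frac{1}{|G|}\sum_{g\in G} g$, and $F[G]e_G\cong F$ via the augmentation. The complementary factor $F[G](1-e_G)=\Delta(G)$ is the augmentation ideal, which is a unital ring with identity $1-e_G$.

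If $G$ is trivial, this factor is zero and the statement is just the hypothesis on $F$, so nothing needs to be checked. Otherwise, the factor inherits the $2$-sum property from $F[G]$ via \cref{cor:F[G]} and \cref{cor:2-property-quotient}; this uses $F\neq\F_2$, which holds by the standing assumption. It then has the $n$-sum property since $n\ge 2$.

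I expect no real obstacle beyond this bookkeeping. The corollary is the special case $H=G$ of the preceding proposition.
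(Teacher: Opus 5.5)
Your proposal is correct and matches the paper's proof: the paper also takes $H=G$ in \cref{prop:quotient-F[G]-semisimple}, so that $F[G/H]=F$ has the $n$-sum property with respect to $U$ by the standing assumption. Your separate treatment of the trivial-group case, where the complementary factor $\Delta(G)$ is the zero ring, is a small refinement that the paper does not spell out.
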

\begin{proof}
    In this case, we can take $G=H$ as in \cref{prop:quotient-F[G]-semisimple}. Then, $F[G/H]=F$, which has the $n$-sum property with respect to $U$ by our assumption. 
\end{proof}

By \cref{prop:p-group} and \cref{cor:semisimple}, we have the following proposition. 

\begin{prop} \label{prop:p-group-all}
    If $G$ is a $p$-group then $F[G]$ has the $n$-sum property with respect to $U_{\epsilon_G}.$
\end{prop}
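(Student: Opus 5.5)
The plan is a case split on the prime $p$ against the characteristic of $F$, using only \cref{prop:p-group} and \cref{cor:semisimple}. Write $q=\operatorname{char}(F)$, and let $G$ be a $p$-group. The standing assumptions of this subsection remain in force: $F\neq \F_2$, $n\geq 2$, and $F$ has the $n$-sum property with respect to $U$.

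\textbf{Case $p=q$.} Then $G$ is a $p$-group with $p=\operatorname{char}(F)$. This is exactly the hypothesis of \cref{prop:p-group}, which gives that $F[G]$ has the $n$-sum property with respect to $U_{\varepsilon_G}$. For context, that proposition works because $\Rad(F[G])=\Delta(G)$, so \cref{prop:infinitesiamal-quotient} reduces the question to the quotient $F[G]/\Delta(G)\cong F$.

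\textbf{Case $p\neq q$.} The order $|G|=p^k$ is a power of a prime different from the characteristic of $F$, so $|G|$ is a nonzero element of the prime field of $F$ and is therefore invertible in $F$. This is the hypothesis of \cref{cor:semisimple}, which yields the $n$-sum property of $F[G]$ with respect to $U_{\varepsilon_G}$. That corollary rests on taking $H=G$ in \cref{prop:quotient-F[G]-semisimple}: the factor $F[G]e_G\cong F$ carries the $n$-sum property by assumption, and the complementary factor $\Delta(G,G)$ has it with respect to all of its units by \cref{cor:F[G]} and \cref{cor:2-property-quotient}.

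The trivial group needs no separate treatment: it is a $p$-group for every $p$, and either case covers it since $F[G]=F$.

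There is no real obstacle; the proof is just a dichotomy. The only point needing a moment's care is that in the second case $|G|$ is a unit in $F$ because it is coprime to $\operatorname{char}(F)$.
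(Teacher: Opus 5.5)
Your proposal is correct and is essentially the paper's proof: a dichotomy on whether $p=\operatorname{char}(F)$, applying \cref{prop:p-group} in that case and \cref{cor:semisimple} otherwise, since then $|G|$ is invertible in $F$.
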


\begin{proof}
    The case $p=\text{char}(F)$ is proved in \cref{prop:p-group} and the case $p \neq \text{char}(F)$ is proved in \cref{cor:semisimple}. 
\end{proof}

We discuss another corollary where the answer to \cref{question:F[G]} is affirmative. 
\begin{cor}
 Suppose that $H \vartriangleleft G$ and $|H|$ is invertible in $F.$ Assume further that $G/H$ is a $p$-group. Then $F[G]$ has the $n$-sum property with respect to $U_{\varepsilon_{G}}$. 
\end{cor}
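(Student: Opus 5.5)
The plan is to combine \cref{prop:p-group-all} with \cref{prop:quotient-F[G]-semisimple}. Set $\bar G = G/H$. By hypothesis $\bar G$ is a $p$-group. Here $p$ is an arbitrary prime; it need not equal $\text{char}(F)$.

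First, apply \cref{prop:p-group-all} to the $p$-group $\bar G$. This gives that $F[\bar G]$ has the $n$-sum property with respect to $U_{\varepsilon_{\bar G}}$. Internally, this step splits into two cases:
\begin{enumerate}
\item If $p=\text{char}(F)$, it comes from \cref{prop:p-group}, via $\Rad(F[\bar G])=\Delta(\bar G)$ and \cref{prop:infinitesiamal-quotient}.
\item If $p\neq \text{char}(F)$, it comes from \cref{cor:semisimple}.
\end{enumerate}
Both cases rest only on the standing assumption that $F$ has the $n$-sum property with respect to $U$.

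Second, since $H \vartriangleleft G$ and $|H|$ is invertible in $F$, \cref{prop:quotient-F[G]-semisimple} applies directly with this $H$. The $n$-sum property for $F[G/H]$ with respect to $U_{\varepsilon_{G/H}}$ then lifts to the $n$-sum property for $F[G]$ with respect to $U_{\varepsilon_G}$.

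There is essentially no obstacle. The only point to check is that the augmentation in \cref{prop:p-group-all}, applied to the group $\bar G$, is exactly the map $\varepsilon_{G/H}$ appearing in \cref{prop:quotient-F[G]-semisimple}. This holds by definition, since both are the augmentation $F[G/H]\to F$. One also verifies that the standing assumptions ($F\neq\F_2$, $n\ge 2$) are unchanged when passing to $\bar G$, so nothing further is needed.
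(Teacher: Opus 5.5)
Your proposal is correct and is exactly the paper's argument. The paper applies \cref{prop:p-group-all} to the $p$-group $G/H$, then lifts the $n$-sum property to $F[G]$ via \cref{prop:quotient-F[G]-semisimple}.
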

\begin{proof}
    This statement follows from \cref{prop:p-group-all} and \cref{prop:quotient-F[G]-semisimple}.
\end{proof}

We discuss another orthogonal variant of \cref{prop:quotient-F[G]-semisimple}.

\begin{prop} \label{prop:F[G]-p-adic-normal}
Suppose that  $H$ is a normal $p$-subgroup of $G$ where $p=\text{char}(F).$ If $F[G/H]$ has the $n$-sum property with respect to $U_{\varepsilon_{G/H}}$, then $F[G]$ has the $n$-sum property with respect to $U_{\epsilon_G}.$ In particular, if $H$ is a normal Sylow $p$-subgroup of $G$ and $F[G/H]$ has the $n$-sum property with respect to $U_{\epsilon_{G/H}}$, then $F[G]$ has the $n$-sum property with respect to $U_{\epsilon_{G}}.$
\end{prop}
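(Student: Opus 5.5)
The plan is to reduce the statement to \cref{prop:infinitesiamal-quotient}, applied to the quotient map $\Phi=\varepsilon_{G,H}\colon F[G]\to F[G/H]$ with the subgroup $U_{\varepsilon_{G/H}}\subset F[G/H]^{\times}$. For this I need two facts: first, that $\ker(\Phi)=\Delta(G,H)$ is contained in $\Rad(F[G])$; second, that the pulled-back set $(U_{\varepsilon_{G/H}})_{\Phi}$ coincides with $U_{\varepsilon_G}$. Granting these, \cref{prop:infinitesiamal-quotient} shows that $F[G]$ has the $n$-sum property with respect to $U_{\varepsilon_G}$ if and only if $F[G/H]$ has it with respect to $U_{\varepsilon_{G/H}}$. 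This gives the first claim; in fact it gives an equivalence.

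For the identification of the sets, I will use that $\varepsilon_{G/H}\circ\varepsilon_{G,H}=\varepsilon_G$, which is clear on basis elements $g\mapsto gH\mapsto 1$. Then for $r\in F[G]^{\times}$, the condition $\Phi(r)\in U_{\varepsilon_{G/H}}$ means $\Phi(r)$ is a unit (automatic) and $\varepsilon_{G/H}(\Phi(r))=\varepsilon_G(r)\in U$. Thus $(U_{\varepsilon_{G/H}})_\Phi=U_{\varepsilon_G}$.

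The main step is showing $\Delta(G,H)\subset\Rad(F[G])$. Recall that $\Delta(G,H)=F[G]\,\Delta(H)=\Delta(H)\,F[G]$, where $\Delta(H)=\ker(\varepsilon_H)$ is the augmentation ideal of $F[H]$; this is generated by the elements $h-1$ for $h\in H$. Since $H$ is a $p$-group and $\operatorname{char}F=p$, the ideal $\Delta(H)$ is nilpotent in $F[H]$, as used in \cref{prop:p-group}. Because $H$ is normal, $g\Delta(H)=\Delta(H)g$ for every $g\in G$. Hence $\Delta(G,H)^k=F[G]\,\Delta(H)^k$ by moving the group elements past the factors, and so $\Delta(G,H)$ is a nilpotent two-sided ideal of $F[G]$. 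A nilpotent ideal lies in the Jacobson radical, which finishes the argument. I expect this commutation step to be the only real point requiring care; the rest is formal.

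For the "in particular" statement, a normal Sylow $p$-subgroup is a normal $p$-subgroup, so the same argument applies directly. One may also note that $|G/H|$ is then prime to $p$, so the hypothesis on $F[G/H]$ holds automatically by \cref{cor:semisimple}.
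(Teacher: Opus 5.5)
Your proposal is correct and follows essentially the same route as the paper: apply \cref{prop:infinitesiamal-quotient} to $\varepsilon_{G,H}\colon F[G]\to F[G/H]$ once $\Delta(G,H)\subset \Rad(F[G])$ is known, and note via \cref{cor:semisimple} that in the Sylow case the hypothesis on $F[G/H]$ is automatic. The differences are minor. You prove the containment directly: nilpotency of $\Delta(H)$ together with $g\Delta(H)=\Delta(H)g$ gives $\Delta(G,H)^k\subset F[G]\Delta(H)^k$, whereas the paper simply cites Passman's Theorem 16.6. You also verify explicitly that $(U_{\varepsilon_{G/H}})_{\Phi}=U_{\varepsilon_G}$, which the paper leaves implicit.
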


\begin{proof}
    By \cite[Theorem 16.6]{passman1971infinite} and the fact that $H$ is a $p$-group, we know that $\Delta(G, H) \subset \Rad(F[G]).$ This first statement then follows from \cref{prop:infinitesiamal-quotient}.

    If \(H\) is a normal \(p\)-Sylow subgroup, then \(|G/H|\) is invertible in \(F\).
Thus Corollary 3.9 implies that \(F[G/H]\) has the \(n\)-sum property with respect
to \(U_{\varepsilon_{G/H}}\). The result follows from the first part.
\end{proof}
Other than the above scenarios, it is unclear to us whether \cref{question:F[G]} has an affirmative answer for all finite groups $G.$ We provide below a class of groups for which we know the answer to it. 

\begin{prop}
Suppose that $G$ is either a finite abelian group or a dihedral group. Then $F[G]$ has the $n$-sum property with respect to $U_{\varepsilon_{G}}$
\end{prop}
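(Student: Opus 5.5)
The plan is to reduce everything to results already proved in this section: \cref{prop:F[G]-p-adic-normal}, \cref{cor:semisimple}, \cref{prop:p-group-all}, and the corollary stating that the answer is affirmative whenever there is $H \vartriangleleft G$ with $|H|$ invertible in $F$ and $G/H$ a $p$-group. Throughout, let $p=\text{char}(F)$. If $p \nmid |G|$, then \cref{cor:semisimple} applies directly, so we may assume $p$ divides $|G|$.

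\textbf{Abelian case.} If $G$ is abelian, its Sylow $p$-subgroup $H$ is normal. The quotient $G/H$ has order prime to $p$, so $|G/H|$ is invertible in $F$. By \cref{cor:semisimple}, $F[G/H]$ has the $n$-sum property with respect to $U_{\varepsilon_{G/H}}$. The second part of \cref{prop:F[G]-p-adic-normal} then gives the claim for $F[G]$.

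\textbf{Dihedral case.} Let $G=D_m=\langle \rho,\sigma \mid \rho^m=\sigma^2=1,\ \sigma\rho\sigma=\rho^{-1}\rangle$, of order $2m$, and let $C=\langle\rho\rangle$ be the rotation subgroup, which is normal of index $2$. The key observation is that every subgroup of the cyclic group $C$ is characteristic in $C$, hence normal in $G$.

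\begin{itemize}
\item \emph{$p$ odd.} The Sylow $p$-subgroup $H$ of $G$ lies in $C$, since the reflections have order $2$ and $H$ has odd order. Hence $H$ is a characteristic subgroup of $C$ and so is normal in $G$. The quotient $G/H$ has order prime to $p$, and we conclude exactly as in the abelian case.
\item \emph{$p=2$.} Write $m=2^a k$ with $k$ odd, and let $H=\langle \rho^{2^a}\rangle \le C$, the cyclic subgroup of order $k$. Then $H$ is normal in $G$, and $|H|=k$ is odd, hence invertible in $F$. Moreover $|G/H|=2^{a+1}$, so $G/H$ is a $2$-group; concretely it is $C_2$ when $a=0$ and $D_{2^a}$ otherwise. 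The corollary (normal $H$ of invertible order with $G/H$ a $p$-group) then finishes this case; internally it combines \cref{prop:p-group-all} for $F[G/H]$ with \cref{prop:quotient-F[G]-semisimple}.
\end{itemize}

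I expect no genuine obstacle. The only points needing care are the normality of the chosen subgroups $H$ in the dihedral case, which follows from the characteristic-subgroup argument above, and matching each case to the correct earlier result: in the $p=2$ case the quotient is a $p$-group, whereas in the other cases it has order prime to $p$.
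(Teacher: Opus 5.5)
Your proof is correct. It differs from the paper's in that it uses no induction.

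\textbf{The paper's route.} For the dihedral group of order $2n$, the paper inducts on $n$. At each step it splits off the normal subgroup $\Z/p^{\alpha}$ for one odd prime $p \mid n$, applying \cref{prop:F[G]-p-adic-normal} or \cref{prop:quotient-F[G]-semisimple} according to whether $p=\operatorname{char}(F)$. It stops when it reaches a $2$-group, which is handled by \cref{prop:p-group-all}. The abelian case is treated by an analogous induction on $|G|$.

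\textbf{Your route.} You pick a single normal subgroup adapted to $p=\operatorname{char}(F)$. When $G$ is abelian or $p$ is odd, you take the normal Sylow $p$-subgroup; the quotient then has order prime to $p$, so \cref{cor:semisimple} and the second part of \cref{prop:F[G]-p-adic-normal} finish. When $p=2$, you take the odd-order part of the rotation subgroup; the quotient is then a $2$-group, so the corollary for normal $H$ of invertible order with $G/H$ a $p$-group finishes. Your normality check (a subgroup of the cyclic normal subgroup $C$ is characteristic in $C$, hence normal in $G$) is exactly what is needed.

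\textbf{What each buys.} Your version is shorter, and it shows the proposition is an immediate consequence of the one-step reductions already in the section. The paper's inductive scheme is the more extensible template. Iterating its two reduction steps gives the same conclusion for any $G$ with a chain $1=G_0\le G_1\le\cdots\le G_k=G$ of subgroups normal in $G$ whose successive quotients are $p$-groups or have order prime to $p$. A single choice of $H$ need not reach all such groups; for example, $S_4$ in characteristic $3$ requires two steps.
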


\begin{proof}
We will give a proof for the case $G=D_{2n}$ where $n \geq 1.$ The case where $G$ is abelian is similar and easier. In fact, for finite abelian \(G\), one argues by induction on \(|G|\), choosing a nontrivial normal Sylow $p$-subgroup and applying \cref{prop:quotient-F[G]-semisimple} or \cref{prop:F[G]-p-adic-normal} according as its order is invertible in \(F\) or  \(p= \operatorname{char}(F)\).

For $n=1$ or $n=2$, $D_{2n}$ is a $2$-group so the statement holds by \cref{prop:p-group-all}. Suppose that it has been proved for all $m <n$. We will show that it also holds for $D_{2n}.$ If $n$ is a power of $2$ then the statement holds by \cref{prop:p-group-all}. Otherwise, suppose that there exists $p \mid n$ such that $p \neq 2.$ Let $\alpha=v_p(n).$ Then, we can find an exact sequence of the form 
\[ 1 \to \Z/p^{\alpha} \to D_{2n} \to D_{2(n/p^{\alpha})} \to 1.\]

If $p=\text{char}(F)$, then our statement follows from \cref{prop:F[G]-p-adic-normal} and our induction hypothesis applied to $D_{2(n/p^{\alpha})}$. On the other hand, if $p \neq \text{char}(F)$, then it follows from \cref{prop:quotient-F[G]-semisimple} and the induction hypothesis applied to $D_{2(n/p^{\alpha})}.$
\end{proof}

\subsection{Sums of normalized units in a finite extension of fields}
In this section, we discuss another situation where normalized units appear quite naturally. More precisely, we are interested in the following question.

\begin{question} \label{question:normalized-units}
    Let $L/F$ be a non-trivial finite extension of finite fields. Let $N: L \to F$ be the norm map, $U_{L/F}= \{u \in L^{\times} \mid N(u)=1 \}$ and $U_{L/F}^{\pm} = \{ \pm{u} \mid u \in U_{L/F} \}.$ Does there exist a positive integer $n$ such that $L$ has the $n$-sum property with respect to $U_{L/F}^{\pm}$?
\end{question}
We remark that we need to consider $U_{L/F}^{\pm}$ to ensure that the graph $\Gamma(L, U_{L/F}^{\pm})$ is undirected. Additionally, if $[L:F]$ is even, then $N(-1)=1$ and hence $U_{L/F}= U_{L/F}^{\pm}.$


We are able to provide a complete answer to Question \ref{question:normalized-units}. In fact we are able to find the minimal $n$ for every non-trivial field extension $L/F$ of finite fields. Thus we determine all $n$ for all such field extensions $L/F$.

\begin{definition}
For a given $L/F$, a finite non-trivial extension of finite fields, we define
\[
m(L/F)=\min\{n\geq 1 : L \text{ has the } n\text{-sum property with respect to }
U_{L/F}^{\pm}\}.
\]
\end{definition}

We are able to completely determine invariants $m(L/F)$ in all cases.

\begin{thm} \label{thm:normalized-units-fields}
 Let $F=\F_q$ and $L/F$ be a non-trivial finite field extension. 
 Then $L$ is additively generated by $U_{L/F}^{\pm}$, i.e., the Cayley graph $\Gamma(L, U_{L/F}^{\pm})$ is always connected. 
 Furthermore, $m(L/F)$ has values $2,3,4$, and they occur as follows:
 \begin{enumerate}
\item $m(L/F)=2$ if $[L:F]\geq 3$, or $[L:F]=2$ and $q\in \{2,3\}$.
\item $m(L/F)=3$ if $[L:F]=2$, $q\not\in\{ 2,3\}$, and either \( q\equiv 0 \pmod 3\), or \(q\equiv 5 \pmod 6\),
or \(q=2^m\) with $m$  odd.
 \item $m(L/F)=4$ if $[L:F]=2$ and either \( q\equiv 1 \pmod 6\) or  \(q=2^m\)  with  $m$  even.
\end{enumerate}
\end{thm}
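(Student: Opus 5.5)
The plan is to reduce everything to a statement about norms. Put $d=[L:F]$ and $U=U_{L/F}^{\pm}$. The key identity is
\[
u+v=u(1+u^{-1}v),
\]
and left-multiplication by $u\in U$ preserves both $U$ and $U$-orbits. Hence an element $x\in L^{\times}$ is a sum of two elements of $U$ if and only if $N(x)\in N(1+U)\cdot N(U)$, where $N(U)\subseteq\{\pm1\}$. The element $0=1+(-1)$ is always such a sum.

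Iterating this, $x$ is a sum of $n$ elements of $U$ if and only if $N(x)$ lies in the set $A_n$ of norms of elements of $1+U+\dots+U$ ($n-1$ copies of $U$), up to the sign group $N(U)$. So $m(L/F)$ is the least $n$ for which $A_n$ (together with $0$, when we need to hit $0$) covers $F$. Connectedness of $\Gamma(L,U)$ follows once some finite $n$ works, and since each case below produces $n\le 4$, I will not argue it separately.

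For $d\ge 3$ I show $A_2=\{N(1+w):w\in U\}\supseteq F^{\times}$. The number of $w\in U_{L/F}$ with $N(1+w)=c$ is the number of $F$-points on the variety $\{w: N(w)=1,\ N(1+w)=c\}$, a variety of dimension $d-2\ge1$ over $F$. I will count these points with multiplicative characters $\chi\circ N$ of $L$: detect $N(w)=1$ and $N(1+w)=c$ by averaging over characters of $F^{\times}$. This gives
\[
\frac{q^d}{(q-1)^2}+\text{error},
\]
where the error consists of Jacobi sums of absolute value $q^{d/2}$. The main term dominates once $q^{d/2}>(q-1)^2$ roughly, which handles $d\ge 5$ and most of $d=3,4$. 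The remaining small pairs $(q,d)$ with $d\in\{3,4\}$ I check separately, either by hand or via explicit elements such as $w=\zeta$ of norm one with $1+\zeta$ of prescribed norm. This finite check, and tightening the Weil-type estimate so that only finitely many cases remain, is the technical but routine part.

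For $d=2$ the computation is explicit, since here $U=U_{L/F}$ is the order-$(q+1)$ subgroup. For $w\in U$ we have $N(1+w)=2+\operatorname{Tr}(w)$. The traces $t=w+w^{-1}$ of norm-one elements are exactly the $t\in F$ for which $X^2-tX+1$ is irreducible or has a double root, i.e.\ $t^2-4$ is a nonsquare or $0$. So
\[
A_2=\{2+t: t^2-4 \text{ a nonsquare or } 0\},
\]
which has about $(q+1)/2$ elements. This gives $m=2$ exactly when $q\in\{2,3\}$; I verify those two by direct listing.

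For $m=3$ I use
\[
A_3=\{N(1+w_1+w_2)\}=\{N(y):N(y)\in A_2\}+U\ \text{ in norm}.
\]
Concretely, the norm form on $L$ is an anisotropic binary quadratic form $Q$. The question becomes whether the circles $Q(z)=1$ translated by points $y$ with $Q(y)\in A_2$ sweep out every value of $Q$. Using intersection counts of conics $Q(x)=c$ and $Q(x-y)=1$ (each has about $q\pm1$ points and they meet in at most 2), I determine which $c$ are missed.

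The main obstacle is the exact arithmetic for $d=2$: proving that $A_3$ misses precisely one value (I expect it to be related to $c=1$ or $c=3$ and to primitive cube roots of unity, hence the conditions mod $6$ and on the parity of $m$ for $q=2^m$) exactly when $q\equiv1\pmod 6$ or $q=4^k$. I would first find the obstruction: when $\mathbb{F}_q$ contains $\omega$ with $\omega^2+\omega+1=0$, I expect some specific norm value not to be representable as $N(1+w_1+w_2)$. I would then show by a conic intersection count that every other value, and every value in the remaining cases, is hit. Finally I show $A_4$ always covers $F$, since sums of two elements of $A_2$-type sets are large by a Cauchy–Davenport-style counting argument on the circle group.
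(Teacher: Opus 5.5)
\paragraph{The cubic case does not go through.} Your norm reformulation is sound, since $U_{L/F}^{\pm}=N^{-1}(\{1,(-1)^d\})$. For $d\ge 4$ your Jacobi-sum count is essentially the paper's argument via Joly's bound for $x^{q-1}+y^{q-1}=b$. The genuine gap is $d=3$. There the main term is of order $q^3/(q-1)^2\approx q$. The $(q-2)(q-3)$ Jacobi sums of modulus $q^{3/2}$, divided by $(q-1)^2$, give an error of order $q^{3/2}$. Your own criterion $q^{d/2}>(q-1)^2$ holds for $d=4$ for every $q$, but for $d=3$ only when $q\le 3$. So what remains is not a finite list of small pairs: it is $d=3$ with all $q\ge 4$. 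No term-by-term bound on the Jacobi sums closes this; you would need a genuine Weil bound for the curve $N(w)=\pm1$, $N(1+w)=c$, together with an irreducibility and genus analysis.

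The paper instead uses an elementary construction. For $A,B\in F^\times$, the cubic $f_t(x)=x^3-tx^2+(t+A+B-1)x-A$ has $f_t(0)=-A\neq 0$ and $f_t(1)=B\neq 0$. Each $a\in F\setminus\{0,1\}$ is a root for exactly one value of $t$, so some $f_t$ has no root in $F$ and is irreducible. A root $w\in L$ then has $N(w)=A$ and $N(1-w)=B$, and $b=bw+b(1-w)$ with $A=B=N(b)^{-1}$. In your notation this gives $N(1+U)\supseteq F^\times$ directly. Because you derive connectedness from $n\le 4$, connectedness is also unproved for $d=3$ in your write-up; the paper proves it separately via \cref{prop:norm-1-elements} and \cref{prop:u-connected}.

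\paragraph{The quadratic $3$-sum step.} For $d=2$ this step is left as a guess, and the guess aims at the wrong target. No nonzero norm value is ever missed: every $b\in L^\times$ lies in $U+U+U$ for all $q$, and the only possible failure is $b=0$. Both facts follow from what you already have.
\begin{itemize}
\item \textbf{The value $0$.} Dividing by $-u_3$ gives $0\in U+U+U\iff 1\in U+U\iff 1\in A_2$. This holds iff $t=-1$ is the trace of a norm-one element, i.e.\ iff the roots of $X^2+X+1$ lie in $U$. For odd $q$ this means $-3$ is $0$ or a nonsquare, i.e.\ $q\equiv 0\pmod 3$ or $q\equiv 5\pmod 6$. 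For $q=2^m$ it means $X^2+X+1$ is irreducible over $F$, i.e.\ $m$ is odd. The paper gets this by applying Frobenius to $1=u_1+u_2$, which forces $u_1u_2=1$ and hence $u_1^2-u_1+1=0$.
\item \textbf{Nonzero $b$.} The map $u\mapsto N(b-u)=N(b)+1-{\rm Tr}(bu^q)$ on $U$ has fibres of size at most $2$, since a line meets the conic $N(u)=1$ in at most two points. So its image contains at least $\lceil (q-1)/2\rceil$ nonzero values. Meanwhile $|A_2\cap F^\times|$ equals $(q+1)/2$ for odd $q$ and $q/2$ for even $q$. In characteristic $2$ you must replace ``$t^2-4$ a nonsquare'' by the condition $t=0$ or ${\rm Tr}_{F/\F_2}(1/t^2)=1$. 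These two subsets of $F^\times$ therefore meet, giving $u\in U$ with $b-u\in U+U$. The paper reaches the same conclusion by an explicit count of the corresponding two sets.
\end{itemize}
The $4$-sum property then follows immediately by choosing $u_1\in U$ with $b-u_1\neq 0$. No Cauchy--Davenport argument is needed.
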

\begin{rem} \label{rem:n-sum-upper}
We observe that for a finite non-trivial extension $L/F$ and a positive integer $n$,  $L/F$ has the
$n$-sum property with respect to $U_{L/F}^{\pm}$ if and only if \( n\geq m(L/F).\) 
Indeed, it suffices to show that if $L/F$ has the $m$-sum property with
respect to $U_{L/F}^{\pm}$, then it has the $(m+1)$-sum property with
respect to $U_{L/F}^{\pm}$. Assume that $L/F$ has the $m$-sum property and
let $b\in L$. Then
\[
b-1=s_1+s_2+\cdots+s_m
\]
for some $s_i\in U_{L/F}^{\pm}$. Hence
\[
b=s_1+s_2+\cdots+s_m+1,
\]
and thus $L/F$ also has the $(m+1)$-sum property with respect to
$U_{L/F}^{\pm}$.
\end{rem}
The proof of Theorem~\ref{thm:normalized-units-fields} will follow from our further results in this
section as follows. The first statement that the Cayley graph $\Gamma(L, U_{L/F}^{\pm})$ is always connected follows  from Proposition~\ref{prop:norm-1-elements} and Proposition~\ref{prop:u-connected}

For the second statement on the possible values of $m=m(L/F)$ we first observe that because $0\notin U_{L/F}^{\pm}$, we see that $m=1$ is impossible.

If $[L:F]\geq 4$, then from Lemma~\ref{lem:[L:F]=4} it follows that $L/F$ has the $2$-sum property with respect to $U_{L/F}^{\pm}$ and therefore \( m(L/F)=2.\) 

From Theorem~\ref{thm:cubic 2-sum} it follows that if $[L:F]=3$, then $L/F$ has again the $2$-sum property and hence $m(L/F)=2$.

So the remaining case is $[L:F]=2$. Then $N_{L/F}(-1)=1$ and we see that \( U_{L/F}^{\pm}=U_{L/F}. \)
From Proposition~\ref{prop:quadratic 2-sum} we see that a quadratic extension has the $2$-sum property if and only if $q=2$ or $q=3$.

Assume now that $[L:F]=2$ and $q\not\in\{ 2,3\}$. Then Proposition~\ref{prop:quadratic 2-sum} implies
that \(m(L/F)\geq 3. \)
Proposition~\ref{prop:degree-2} implies that $L$ always has the $4$-sum property  with respect to
$U_{L/F}^{\pm}$.

Now Proposition~\ref{prop:quadratic 3-sum} says that $L/F$ has the $3$-sum property with respect to
$U_{L/F}^{\pm}$ if and only if
\[
q\equiv 0 \pmod 3,\quad q\equiv 5 \pmod 6,
\quad \text{or}\quad q=2^m \text{ with } m \text{ odd}.
\]
Because $q$ is a power of a prime $p$, the remaining cases for $q$ are
\[
q\equiv 1 \pmod 6
\quad \text{or}\quad
q=2^m \text{ with } m \text{ even}.
\]
This completes the proof of Theorem~\ref{thm:normalized-units-fields}.


Let us discuss the first statement of \cref{thm:normalized-units-fields} with the following proposition. 

\begin{prop} \label{prop:norm-1-elements}
 Let $L/F$ be a finite Galois extension.  Let $K$ be a proper subfield of $L$ and $\sigma \in \Gal(L/F)$ such that $\sigma \neq 1.$ Then, there exists $x \in L^{\times}$ such that $\dfrac{\sigma(x)}{x} \not \in K.$ In particular, since $\dfrac{\sigma(x)}{x} \in U_{L/F}$, we have  $U_{L/F} \not \subset K.$
\end{prop}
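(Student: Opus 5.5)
The plan is to argue by contradiction. Suppose that $\sigma(x)/x \in K$ for every $x \in L^{\times}$, and consider the map $\phi\colon L^{\times} \to L^{\times}$ given by $\phi(x) = \sigma(x)/x$. This map is a group homomorphism. Its kernel is $L^{\times} \cap L^{\langle\sigma\rangle}$, the nonzero elements of the fixed field of $\sigma$. Let $E = L^{\langle \sigma \rangle}$. Since $\sigma \neq 1$, $E$ is a proper subfield of $L$. Write $|L| = Q$, $|E| = Q_0$ and $|K| = Q_1$.

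The first step is a counting argument. The image of $\phi$ has order $|L^{\times}|/|E^{\times}| = (Q-1)/(Q_0-1)$. Under our assumption this image lies in $K^{\times}$, so
\[ \frac{Q-1}{Q_0-1} \leq Q_1 - 1. \]
Because $E$ and $K$ are proper subfields of $L$, we have $Q_0 \leq Q^{1/2}$ and $Q_1 \leq Q^{1/2}$. This gives
\[ Q - 1 \leq (Q^{1/2}-1)^2 = Q - 2Q^{1/2} + 1, \]
that is, $2Q^{1/2} \leq 2$, which is impossible since $Q \geq 4$. Hence some $x \in L^{\times}$ satisfies $\sigma(x)/x \notin K$.

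For the final claim, I would check that the quotient lies in $U_{L/F}$. For finite fields, $\Gal(L/F)$ is cyclic, generated by Frobenius, and $N(y) = \prod_{\tau \in \Gal(L/F)} \tau(y)$. Since $\Gal(L/F)$ is abelian,
\[ N(\sigma(x)) = \prod_{\tau} \tau\sigma(x) = N(x). \]
It follows that $N(\sigma(x)/x) = 1$. In general Galois extensions the same reindexing holds, because $\tau \mapsto \tau\sigma$ is a bijection of the group. Therefore $\sigma(x)/x \in U_{L/F} \setminus K$, so $U_{L/F} \not\subset K$.

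The argument has no real obstacle. The step needing the most care is the bound $|K|, |E| \leq |L|^{1/2}$ for proper subfields, which holds because a proper subfield of $\F_{p^f}$ is $\F_{p^d}$ with $d \mid f$ and $d < f$, hence $d \leq f/2$. If $L/F$ were not assumed finite, one would instead argue that a proper subfield cannot contain a group of the required size. Here everything is finite, so the counting suffices.
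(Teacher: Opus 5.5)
Your argument is correct for finite fields, which is where the paper defines $U_{L/F}$ and applies this proposition, but it takes a different route from the paper.

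The paper argues additively. It writes $\sigma(x)=f(x)x$ with $f(x)\in K$ and compares this with $\sigma(x+1)=\sigma(x)+1$, which gives $x\bigl(f(x+1)-f(x)\bigr)=1-f(x+1)$. Hence every $x$ lies in $K$ or in $L^{\sigma}$, so $L=L^{\sigma}\cup K$. This is impossible, since a group is never the union of two proper subgroups. That argument uses no cardinalities. It therefore proves the first assertion for any nontrivial automorphism $\sigma$ of any field $L$ and any proper subfield $K$, including finite Galois extensions of infinite fields.

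Your proof is multiplicative and counts. You treat $x\mapsto\sigma(x)/x$ as a homomorphism with kernel $(L^{\sigma})^{\times}$. Its image then has order $(|L|-1)/(|L^{\sigma}|-1)\ge |L|^{1/2}+1$, which is too large to fit in $K^{\times}$, because proper subfields of a finite field have at most $|L|^{1/2}$ elements. This gives a quantitative statement and avoids a small edge case the paper glosses over: at $x=-1$, $f(0)$ is undefined, which is harmless since $-1\in K$.

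The cost is that your argument depends on $L$ being finite. Your closing remark about ``a group of the required size'' is not an argument when $L$ is infinite, since both sets are then infinite; there you would need the paper's additive trick. Your check that $N(\sigma(x)/x)=1$ by reindexing $\tau\mapsto\tau\sigma$ is fine and supplies the step the paper takes for granted.
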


\begin{proof}

Suppose to the contrary that $\sigma(x)/x \in K$ for all $x \in L^{\times}.$ We claim that this would lead to the equality that $L=L^{\sigma} \cup K $, which is a contradiction.

For each $x$, there exists $f(x) \in K $ such that $\sigma(x)=f(x)x.$
   Similarly, we have $\sigma(x+1)=f(x+1) (x+1).$ This implies that 
   \[ (x+1)f(x+1)-xf(x)=\sigma(x+1)-\sigma(x)=1.\]
   We can rewrite this as 
   \[ x[f(x+1)-f(x)]=1-f(x+1).\]

   If $f(x+1)-f(x) \neq 0$ then $x \in K.$ Otherwise, if $f(x+1)=f(x)$, then $1-f(x+1)=0$ and hence $f(x)=1.$ This would imply that $\sigma(x)=x$; or equivalently $x \in L^{\sigma}.$ In all cases, $x \in L^{\sigma} \cup K$.       
\end{proof}

\begin{prop} \label{prop:u-connected}
   Let $L$ be a finite field and  $U$ be a subgroup of $L^{\times}$. The following conditions are equivalent 
    \begin{enumerate}
        \item $\Gamma(L, U)$ is connected. 
        \item $U \not \subset K$ for all proper subfield  $K$ of $L.$ 
    \end{enumerate}
\end{prop}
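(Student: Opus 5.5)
We prove the equivalence by identifying the additive subgroup generated by $U$ with a subfield of $L$. Recall the observation used throughout the paper (e.g.\ in the proof of the classification of connected gcd-graphs): $\Gamma(L,U)$ is connected if and only if the additive subgroup $H$ of $L$ generated by $U$ equals $L$. So the statement reduces to showing that $H=L$ exactly when $U$ is not contained in any proper subfield.

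For $(1)\implies(2)$, suppose $U\subset K$ for some proper subfield $K$ of $L$. Since $K$ is closed under addition and subtraction, $H\subset K\subsetneq L$. Hence $\Gamma(L,U)$ is disconnected; concretely, its connected components are the additive cosets of $K$. This gives the contrapositive.

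For $(2)\implies(1)$, the key step is to show that $H$ is itself a subfield of $L$.
\begin{enumerate}
\item $H$ is an additive group, and $1\in U\subset H$ because $U$ is a subgroup of $L^{\times}$.
\item $H$ is closed under multiplication. An element of $H$ is a $\Z$-linear combination $\sum n_i u_i$ with $u_i\in U$. A product of two such combinations is a $\Z$-combination of products $u_iu_j'$, and each $u_iu_j'$ lies in $U$ since $U$ is a multiplicative group. Thus $H$ is a subring of $L$.
\item A finite subring of a field is an integral domain, and a finite integral domain is a field. Hence $H$ is a subfield of $L$.
\end{enumerate}
Since $U\subset H$ and, by assumption (2), $U$ lies in no proper subfield, we must have $H=L$. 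Therefore $\Gamma(L,U)$ is connected.

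There is no serious obstacle here. The only point needing care is recognizing that the $\Z$-span of a multiplicative group is automatically closed under multiplication. Together with Proposition~\ref{prop:norm-1-elements}, this shows $U_{L/F}\not\subset K$ for every proper subfield $K$, and hence gives the connectivity claim in Theorem~\ref{thm:normalized-units-fields}.
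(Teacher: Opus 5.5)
Your proof is correct and follows essentially the same route as the paper: both show that the additive group generated by $U$ is a subfield of $L$ (a finite subring of a field) and then argue each direction from that. One small slip: in the contrapositive of $(1)\implies(2)$, the connected components of $\Gamma(L,U)$ are the cosets of $H$, which may be strictly smaller than $K$, not the cosets of $K$; this remark is not needed, since $H\subset K\subsetneq L$ already gives disconnectedness.
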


\begin{proof}
    Let $S$ be the abelian group generated by $U.$ We know that $U \subset S$. Furthermore, we can check that $S$ is a subring of $L.$ Since $L$ is a field, $S$ is even a subfield of $L.$ By our assumption, for each proper subfield $K$ of $L$, $U \not \subset K$, and hence  $S \not \subset K$.  This must imply that $S=L$. Therefore, $\Gamma(L, U)$ is connected. 

    Conversely, suppose that $\Gamma(L,U)$ is connected. Then $S=L$. Let $K$ be a subfield of $L$ such that $U \subset K.$ By the definition of $S$, $S  \subset K.$ We conclude that $K=L.$
\end{proof}

We then see that the first part of \cref{thm:normalized-units-fields} follows from \cref{prop:norm-1-elements} and \cref{prop:u-connected}.

Let us now prove the second part of \cref{thm:normalized-units-fields}. By Hilbert 90, every normalized unit in $L/F$ is of the form $\dfrac{\sigma_q(x)}{x}=x^{q-1}$ where $q=|F|$ and $\sigma_q$ is the Frobenius map $\sigma_q(x)=x^{q}.$ 
The following result says that if $[L:F]\geq 4$ then $L/F$ always has the $2$-sum property  with respect to
$U_{L/F}^{\pm}$. The proof uses some estimates for certain character sums.
\begin{lem} \label{lem:[L:F]=4}
Let $F=\F_q$ and  $[L:F] \geq 4$. Then for each $b \in L$, there exists $x,y \in L^{\times}$ such that  either $x^{q-1}-y^{q-1}=b$ or 
$x^{q-1}+y^{q-1}=b .$
\end{lem}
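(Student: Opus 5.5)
The plan is to handle $b=0$ directly and, for $b\neq 0$, to count solutions of $u+v=b$ with $u,v$ of norm $1$ using multiplicative characters of $L$; the Weil bound for Jacobi sums should give a positive count once $[L:F]\ge 4$. The case $b=0$ is trivial: take $x=y=1$, so $x^{q-1}-y^{q-1}=0$. From now on let $b\neq 0$ and write $Q=|L|=q^k$ with $k=[L:F]\geq 4$. The map $x\mapsto x^{q-1}$ on $L^\times$ has kernel $F^\times$. Its image is therefore the subgroup $H=U_{L/F}$ of index $q-1$ in $L^\times$, which by Hilbert 90 is exactly the set of norm-one elements. So it suffices to show that $u+v=b$ has a solution with $u,v\in H$. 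Indeed, if $u=x^{q-1}$ and $v=y^{q-1}$, then $b=x^{q-1}+y^{q-1}$.

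The characters of $L^\times$ trivial on $H$ are exactly the $q-1$ characters $\chi=\psi\circ N$, where $\psi$ ranges over the characters of $F^\times$. Hence $\mathbf 1_H(u)=\frac{1}{q-1}\sum_{\chi}\chi(u)$ for $u\in L^\times$. I will count
\[
\mathcal N(b)=\#\{(u,v)\in H\times H : u+v=b\}=\frac{1}{(q-1)^2}\sum_{\chi_1,\chi_2}\ \sum_{\substack{u+v=b\\ u,v\neq 0}}\chi_1(u)\chi_2(v).
\]
Substituting $u=bs$, $v=b(1-s)$ turns the inner sum into $\chi_1\chi_2(b)J(\chi_1,\chi_2)$, where $J$ is the Jacobi sum over $L$. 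I then split into cases.
\begin{enumerate}
\item If $\chi_1=\chi_2=1$, the term is $Q-2$.
\item If exactly one of $\chi_1,\chi_2$ is trivial, the term has absolute value $1$.
\item If both are nontrivial but $\chi_1\chi_2=1$, the term has absolute value $1$.
\item If $\chi_1$, $\chi_2$ and $\chi_1\chi_2$ are all nontrivial, the term has absolute value $\sqrt{Q}$, by the classical Gauss sum evaluation of Jacobi sums.
\end{enumerate}

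Putting these together gives
\[
(q-1)^2\,\mathcal N(b)\ \geq\ Q-2-3(q-2)-(q-2)^2\sqrt{Q}\cdot\tfrac{q-3}{q-2}\ \geq\ Q-2-3q-(q-2)^2\sqrt Q .
\]
It remains to check that this is positive when $Q=q^k$ with $k\ge 4$. Here $\sqrt Q\geq q^2$, so $Q-(q-2)^2\sqrt Q=\sqrt Q\,(\sqrt Q-(q-2)^2)\geq q^2(4q-4)$, which comfortably exceeds $3q+2$ for every $q\ge 2$. Hence $\mathcal N(b)>0$, and the "$+$" equation already suffices for every $b\neq 0$. The "$-$" alternative in the statement is needed only for $b=0$, because $-1$ need not lie in $H$ when $k$ is odd.

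The main obstacle is the careful bookkeeping of the degenerate character pairs: those with one character trivial, and those with $\chi_1\chi_2=1$, where $|J|=1$ rather than $\sqrt Q$. These must be counted correctly so that the main term $Q-2$ beats the error term of size about $(q-1)^2\sqrt Q$. The inequality is what forces $k\geq 4$: when $k=2$ one has $\sqrt Q=q$, which is much smaller than $(q-1)^2$, so the estimate is hopeless. This matches the later need for separate arguments when $[L:F]\in\{2,3\}$.
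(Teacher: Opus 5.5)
Your proof is correct and is essentially the paper's argument. The paper counts all $(x,y)\in L^2$ with $x^{q-1}+y^{q-1}=b$, cites Joly's Weil-type bound $|N_2(b)-q^k|\le (q-2)^2q^{k/2}$, and subtracts the at most $2(q-1)$ solutions with $xy=0$; you instead unpack that citation by expanding over the characters of $L^\times/U_{L/F}$ and using Jacobi sums (your $(q-1)^2\mathcal N(b)$ is exactly the paper's $M_2(b)$), which makes the proof self-contained and the error term slightly sharper. The only blemish is cosmetic: write the number of generic character pairs directly as $(q-2)(q-3)$ rather than as $(q-2)^2\cdot\frac{q-3}{q-2}$, which is undefined at $q=2$.
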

\begin{proof}
If $b=0$ then we can take $x=y=1$, and we have  $1^{q-1}-1^{q-1}=0.$ Let us suppose that $b \neq 0.$ We will show that we can find $x,y \in L^{\times}$ such that $x^{q-1}+y^{q-1}=b.$

Let $k=[L:F]$ and hence $L=\F_{q^k}.$
We modify the proof given in \cite[Theorem 3]{bergelson2021sums}. Let $N_2(b)$ be the number of solutions of the equation $x^{q-1}+y^{q-1}=b.$ Let $M_2(b)$ be the number of solutions such that $x, y \in L^{\times}.$ We need to show that $M_2(b) > 0$ for each $b \in L.$ For each $b$, the equation $x^{q-1}=b$ has at most $q-1$ solutions. Therefore $M_2(b) \geq N_2(b)-2(q-1).$ On the other hand, by \cite[Corollary 1, p. 57]{joly1973equations}, we have 
\[ |N_2(b)-q^k| \leq (q-2)^2 q^{k/2}< (q-1)^2q^{k/2}. \]
This shows that $N_2(b) \geq q^k - (q-1)^2 q^{k/2}$ and hence 
\[ M_2(b) \geq q^{k}-(q-1)^2 q^{k/2}-2(q-1).\]
If $k \geq 4$ then 
\[ M_2(b) \geq  [q^2-(q-1)^2] q^{k/2} - 2(q-1) =(2q-1)q^{k/2} - 2(q-1)>(2q-1)-2(q-1)=1. \]
\end{proof}
\begin{rem} \label{rem:0-sum-of-q-1}
    We can show that if $k,q$ are both odd then we cannot find $x,y \in L^{\times}$ such that $0=x^{q-1}+y^{q-1}.$ In fact, if we let $z=x/y$ then $z^{q-1}=-1.$ 
    We then have 
    \[ 1 = z^{q^k-1}=(z^{q-1})^{\frac{q^k-1}{q-1}}=-1.\]
    This would contradict the fact that $q$ is odd. Therefore, it is important to use $U_{L/F}^{\pm}$ instead of $U_{L/F}.$
\end{rem}

For the cubic case, we obtain the following result, Theorem~\ref{thm:cubic 2-sum}, which says that if $[L:F]=3$ then $L/F$ always has the $2$-sum property  with respect to $U_{L/F}^{\pm}$. 

\begin{lem}
Let $F=\mathbb{F}_q$, and let $L/F$ be a cubic extension.
Let $A,B\in F^\times$. Then there exists $w\in L$ such that
\[ N_{L/F}(w)=A,\qquad N_{L/F}(1-w)=B.
\]
\end{lem}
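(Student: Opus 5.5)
The plan is to realize $w$ as a root of a suitable irreducible cubic over $F$, chosen with prescribed constant term and prescribed value at $1$. Let $w \in L \setminus F$. Since $[L:F]=3$, its minimal polynomial over $F$ is a cubic
\[ P(x) = x^3 - s_1 x^2 + s_2 x - s_3, \]
whose three roots are the Galois conjugates $w, w^q, w^{q^2}$. This gives two identities:
\[ N_{L/F}(w)=s_3, \qquad N_{L/F}(1-w) = \prod_{i=0}^{2} \bigl(1-w^{q^i}\bigr) = P(1) = 1 - s_1 + s_2 - s_3. \]
Conversely, every monic irreducible cubic over $F$ has its roots in $L$, because $L$ is the unique cubic extension of $F$. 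So it suffices to find an irreducible monic cubic $P$ over $F$ with $P(0) = -A$ and $P(1) = B$.

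These two conditions leave a one-parameter family. For $t \in F$, set
\[ P_t(x) = x^3 - t x^2 + (A+B-1+t)\,x - A. \]
Then $P_t(0)=-A$, and $P_t(1) = 1 - t + A + B - 1 + t - A = B$ for every $t$. A cubic over a field is irreducible exactly when it has no root in that field. So the key step is a count of how many $t$ make $P_t$ have a root $r \in F$.

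First, $r=0$ is never a root, since $P_t(0)=-A\neq 0$. Likewise $r=1$ is never a root, since $P_t(1)=B\neq 0$; this is where the hypotheses $A,B \in F^\times$ enter. Next, take $r \in F\setminus\{0,1\}$. The equation $P_t(r)=0$ is linear in $t$:
\[ t\,(r-r^2) = -r^3 - (A+B-1)\,r + A. \]
Its coefficient $r - r^2 = r(1-r)$ is nonzero, so each such $r$ is a root of $P_t$ for exactly one value of $t$. Hence at most $q-2$ of the $q$ parameters $t$ give a reducible $P_t$, and at least two values of $t$ give an irreducible $P_t$.

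To finish, pick such a $t$ and let $w \in L$ be a root of $P_t$. Then $N_{L/F}(w) = A$ and $N_{L/F}(1-w) = P_t(1) = B$. There is no real obstacle beyond noticing that the conditions $N(w)=A$ and $N(1-w)=B$ are exactly the conditions $P(0)=-A$ and $P(1)=B$ on the minimal polynomial. The only point needing care is the exclusion of the roots $r=0,1$, which is precisely where $A,B\neq 0$ is used.
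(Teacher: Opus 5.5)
Your proposal is correct and matches the paper's proof essentially step for step. You use the same family $x^3 - tx^2 + (t+A+B-1)x - A$, the same exclusion of roots $0$ and $1$ via $A,B\neq 0$, and the same count: at most $q-2$ values of $t$ give a root in $F$. You then read off $N_{L/F}(w)$ and $N_{L/F}(1-w)$ from the factorization $(x-w)(x-w^q)(x-w^{q^2})$ over $L$, exactly as the paper does.
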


\begin{proof}
For each $t\in F$, consider the polynomial
\[
f_t(x) =
x^3-tx^2+(t+A+B-1)x-A \in F[x].
\]
We shall show that there exists $t\in F$ such that $f_t(x)$ is irreducible.

Since \( f_t(0)=-A\neq0,\) and 
\[ f_t(1) = 1-t+(t+A+B-1)-A = B\neq0, \]
the polynomial has no roots at $0$ or $1$.

Suppose $a\in F\setminus\{0,1\}$. Then \( a-a^2=a(1-a)\neq0.\)
If $a$ is a root of $f_t(x)$, then
\[ a^3-ta^2+(t+A+B-1)a-A=0,\]
which is equivalent to
\[ t = \frac{A-a^3-(A+B-1)a}{a-a^2}. \]
Therefore there are at most $q-2$ values of $t$ for which $f_t(x)$ has a root in $F$.
Since there are $q$ possible values of $t$, there exists at least one value of $t$ such that $f_t(x)$ has no roots in $F$.
As $f_t(x)$ is cubic, it is therefore irreducible.

Let $w$ be a root of such a polynomial $f_t(x)$. 
Since every irreducible cubic over $F$ splits over the unique degree-$3$ extension $L$, we may regard $w$ as an element of $L$.
Writing
\[ f_t(x) =  (x-w)(x-w^q)(x-w^{q^2}), \]
and comparing coefficients, we obtain
\[
\begin{aligned}
t &=w+w^q+w^{q^2},\\
t+A+B-1 &=ww^q+ww^{q^2}+w^qw^{q^2},\\
A&=ww^qw^{q^2} = N_{L/F}(w).
\end{aligned}
\]

On the other hand, we have
\begin{align*}
N_{L/F}(1-w) &= (1-w)(1-w^q)(1-w^{q^2}) \\
&= 1-(w+w^q+w^{q^2}) +(ww^q+ww^{q^2}+w^qw^{q^2}) -ww^qw^{q^2} \\
&= 1-t+(t+A+B-1)-A \\
&= B.
\end{align*}
This completes the proof.
\end{proof}

\begin{thm}
\label{thm:cubic 2-sum}
Let $F=\mathbb{F}_q$, and let $L/F$ be a cubic extension.
Then $L$ has the $2$-sum property with respect to $U_{L/F}^{\pm}$.
More precisely,

\begin{enumerate}
\item every nonzero element of $L$ is the sum of two elements of $U_{L/F}$;
\item the element $0$ is the sum of two elements of $U_{L/F}^{\pm}$.
\end{enumerate}
\end{thm}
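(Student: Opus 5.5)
Part (1) follows from the preceding lemma by a rescaling argument. Fix $b \in L^{\times}$ and look for $u,v \in U_{L/F}$ with $u+v=b$. Dividing by $b$, this is the same as finding $w \in L$ with
\[
N_{L/F}(w)=N_{L/F}(b)^{-1}, \qquad N_{L/F}(1-w)=N_{L/F}(b)^{-1}.
\]
Indeed, given such a $w$, set $u=bw$ and $v=b(1-w)$. Then $N(u)=N(b)N(w)=1$ and $N(v)=N(b)N(1-w)=1$, and clearly $u+v=b$. Put $A=B=N_{L/F}(b)^{-1}$. Since $b\neq 0$, both lie in $F^{\times}$, so the previous lemma applies and produces the required $w$. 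The lemma already guarantees $w \ne 0,1$, because both norms are nonzero, so $u$ and $v$ are genuine units. Hence every nonzero $b$ is a sum of two elements of $U_{L/F}$.

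Part (2) is immediate: take $1 \in U_{L/F}$ and $-1 \in U_{L/F}^{\pm}$, so $0 = 1 + (-1)$. This is exactly why $U_{L/F}^{\pm}$ is needed. By \cref{rem:0-sum-of-q-1}, when $q$ is odd (and $k=3$ is odd) the element $0$ is not a sum of two elements of $U_{L/F}$, since $N(-1)=-1$ in a cubic extension.

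Combining (1) and (2), and using $U_{L/F}\subset U_{L/F}^{\pm}$, every element of $L$ is a sum of exactly two elements of $U_{L/F}^{\pm}$, which is the $2$-sum property. No step here is a real obstacle: the only substantive input is the existence of an irreducible $f_t$ with prescribed norms, which the preceding lemma supplies. The one point to check is that the norm is multiplicative, so that the rescaling by $b$ turns the prescribed norms $N(b)^{-1}$ into norm-$1$ elements.
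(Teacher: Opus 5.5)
Your proposal is correct and follows the paper's proof essentially verbatim: you write $u=bw$, $v=b(1-w)$, apply the preceding lemma with $A=B=N_{L/F}(b)^{-1}$ and use multiplicativity of the norm for (1), and take $0=1+(-1)$ for (2). Your side remark that $U_{L/F}^{\pm}$ is genuinely needed when $q$ is odd also matches the observation the paper makes right after the theorem.
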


\begin{proof}
Let $b\in L^\times$.
We want to find $u,v\in U_{L/F}$ such that
\( b=u+v. \)
Write
\[ u=bw,\qquad v=b(1-w). \]
Then
\[ N_{L/F}(u) = N_{L/F}(b)N_{L/F}(w), \]
and
\[ N_{L/F}(v) = N_{L/F}(b)N_{L/F}(1-w). \]
Let
\( c=N_{L/F}(b)\in F^\times. \)
Applying the previous lemma with
\( A=B=c^{-1}, \)
we obtain $w\in L$ satisfying
\[ N_{L/F}(w)=c^{-1}, \qquad N_{L/F}(1-w)=c^{-1}. \]
Hence \( N_{L/F}(u)=N_{L/F}(v)=1, \) so \( u,v\in U_{L/F}, \) and \( b=u+v.\)
This proves the first assertion.

The second assertion follows immediately from \( 0=1+(-1).\)
\end{proof}

Observe that the second statement cannot, in general, be strengthened by
replacing $U_{L/F}^{\pm}$ with $U_{L/F}$ when $q$ is odd.
Indeed, suppose that
\[
0=u+v,
\qquad
u,v\in U_{L/F}.
\]
Then
\( v=-u,\)
and therefore
\[
1
=
N_{L/F}(v)
=
N_{L/F}(-u)
=
(-1)^3N_{L/F}(u)
=
-1,
\]
a contradiction.

One may ask whether similar statements in \cref{thm:cubic 2-sum} hold when $[L:F]=2$. Unfortunately, they do not, but there are some patterns. For example, let $F=\mathbb{F}_4$ and $L=\mathbb{F}_{16}$. Here $[L:F]=2$ and the graph $\Gamma(L,U_{L/F}^{\pm})$ is shown in \cref{fig:GF4}. Brute-force computations in SageMath show that $L$ does not have the $3$-sum property with respect to $U_{L/F}^{\pm}$. This is also visible from \cref{fig:GF4}, where the graph $\Gamma(L,U_{L/F}^{\pm})$   has no $3$-cycles.
\begin{figure}
  \centering
  \includegraphics[width=0.4\textwidth]{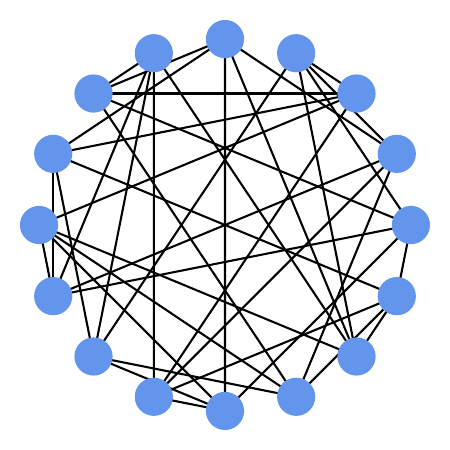}
  \caption{The Cayley graph $\Gamma(\F_{16}, U_{\F_{16}/F_4}^{\pm})$}
  \label{fig:GF4}
\end{figure}

The $n$-sum properties for $n \in \{2, 3\}$ for other values of $q$ are summarized in \cref{tab:sum-properties-transposed}.
The data suggest that when $q$ is odd, $L$ has the $3$-sum property with respect to $U_{L/F_q}^{\pm}$  only if $q \not \equiv 1 \pmod{6}.$ On the other hand, the data also suggest that the $2$-sum property with respect to $U_{L/F}^{\pm}$ fails for $q \geq 4$ (note that in this case $U_{L/F}=U_{L/F}^{\pm}$ since $N(-1)=(-1)^2=1).$ We will provide below a complete answer for these observations.
\begin{table}[htbp]
\centering
\begin{tabular}{|l|*{18}{c|}}
\hline
$q$ & 2 & 3 & 4 & 5 & 7 &  8 & 9 & 11 & 13 & 16 & 17 & 19 & 23 & 25 & 27 & 29 & 31 & 32 \\
\hline
2-sum property & T & T & F & F & F & F & F & F & F & F & F & F & F & F & F & F & F & F \\
\hline
3-sum property & T & T & F & T & F &T & T & T & F & F & T & F & T & F & T & T & F & T \\
\hline
\end{tabular}
\caption{$n$-sum property when $[L:F]=2$.}
\label{tab:sum-properties-transposed}
\end{table}




\begin{lem}
\label{lem:AS reducible}
    Let $F$ be a finite field of characteristic $p$. Let $a\in F$. The polynomial $x^p-x-a$ has a zero in $F$ if and only if ${\rm Tr}_{F/\F_p}(a)=0$.
\end{lem}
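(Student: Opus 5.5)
The plan is to prove this Artin--Schreier criterion by viewing the map $\wp\colon F\to F$, $\wp(x)=x^p-x$, as an $\F_p$-linear map. The goal is to show that its image coincides with the kernel of ${\rm Tr}_{F/\F_p}$. Since $x^p-x-a$ has a zero in $F$ exactly when $a\in \wp(F)$, this identification of $\wp(F)$ with $\ker({\rm Tr}_{F/\F_p})$ is equivalent to the statement.

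The first step is to note that $\wp$ is additive, because Frobenius is additive in characteristic $p$, and that it commutes with multiplication by elements of $\F_p$. Hence $\wp$ is $\F_p$-linear. Its kernel is $\{x\in F: x^p=x\}=\F_p$, which has $p$ elements. Writing $|F|=p^f$, rank--nullity gives $|\wp(F)|=p^{f-1}$.

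Next I show that $\wp(F)\subseteq \ker({\rm Tr}_{F/\F_p})$. Since $\Gal(F/\F_p)$ is generated by Frobenius, ${\rm Tr}(x^p)={\rm Tr}(x)$: applying Frobenius only permutes the conjugates $x,x^p,\dots,x^{p^{f-1}}$ cyclically. Therefore ${\rm Tr}(x^p-x)=0$ for every $x\in F$.

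Finally I compare sizes. The trace ${\rm Tr}_{F/\F_p}$ is $\F_p$-linear and not identically zero, because it is given by the polynomial $x+x^p+\dots+x^{p^{f-1}}$ of degree $p^{f-1}<p^f$, which cannot vanish on all of $F$. So it is surjective onto $\F_p$, and its kernel has $p^{f-1}$ elements. This equals $|\wp(F)|$, and combined with the inclusion above it gives $\wp(F)=\ker({\rm Tr})$.

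The only mildly delicate point is the nonvanishing of the trace, and the degree argument handles it. The rest is routine linear algebra over $\F_p$.
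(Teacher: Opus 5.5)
Your proof is correct, and it takes a different route from the paper. The paper proves the lemma in one line by citing the additive form of Hilbert's Theorem~90 \cite[Chapter VI, Theorem 6.3]{Lang2002} for the cyclic extension $F/\F_p$ with Frobenius generator $\sigma$. That theorem gives ${\rm Tr}_{F/\F_p}(a)=0$ if and only if $a=\sigma\alpha-\alpha=\alpha^p-\alpha$ for some $\alpha\in F$.

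You are in effect reproving this special case of additive Hilbert~90 from scratch. The easy inclusion $\wp(F)\subseteq\ker({\rm Tr})$ is the same in both arguments. The reverse inclusion is where the routes differ:
\begin{itemize}
\item The general proof constructs a preimage explicitly from an auxiliary element $\theta$ with ${\rm Tr}(\theta)\neq 0$, which is supplied by Artin's independence of characters.
\item You obtain it by comparing cardinalities, using $\ker\wp=\F_p$ and rank--nullity. Your degree bound on $x+x^p+\cdots+x^{p^{f-1}}$ plays the role of the independence-of-characters input.
\end{itemize}

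The citation is shorter and rests on a theorem valid for any cyclic extension, including infinite fields where counting is unavailable. Your argument is self-contained and uses nothing beyond the finiteness of $F$ and linear algebra over $\F_p$.
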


\begin{proof}
    Let $\sigma\colon F\to F$, $\alpha\mapsto \alpha^p$ be the Frobenius automorphism. Then the extension $F/\F_p$ is Galois whose Galois group is a cyclic group generated by $\sigma$. By Hilbert's Theorem 90 (the additive form), \cite[Chapter VI, Theorem 6.3]{Lang2002}, we know that ${\rm Tr}_{F/\F_p}(a)=0$ if and only if there exists $\alpha\in F$ such that
    \[
    a=\sigma \alpha-\alpha =\alpha^p-\alpha,
    \]
   i.e., if and only if the polynomial $x^p-x-a$ has a zero in $F$.
\end{proof}

\begin{lem}
\label{lem:U+U}
    Let $F=\F_q$ and  $L=\F_{q^2}$ be a quadratic extension of $F$. Let $b\in L^\times$.
    \begin{itemize}
     \item[(a)] If  ${\rm char} F\not=2$ then $b\in U_{L/F}+U_{L/F}$ if and only if  $N(b)(N(b)-4)=0$ or a nonsquare.
        \item[(b)] If  ${\rm char} F=2$ then $b\in U_{L/F}+U_{L/F}$ if and only if  ${\rm Tr}_{F/\F_2}\left(\dfrac{1}{N(b)}\right)=1$.
           \end{itemize}
\end{lem}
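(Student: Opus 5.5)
The plan is to reduce the question to whether a quadratic polynomial over $F$ has a root. Write $b=u+v$ with $u,v\in U_{L/F}$.

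\emph{Reduction.} Since $[L:F]=2$, the norm is $N(x)=x\bar x$ with $\bar x=x^q$. For $u\in U_{L/F}$ we have $\bar u=u^{-1}$. Suppose $b=u+v$ with $N(u)=N(v)=1$. Then
\[
\bar b=u^{-1}+v^{-1}=\frac{b}{uv},
\]
so $uv=b/\bar b$. Hence $u$ and $v$ are the two roots of
\[
g(X)=X^2-bX+\frac{b}{\bar b}.
\]
It is convenient to substitute $X=bY$. This turns $g$ into $b^2\bigl(Y^2-Y+\frac{1}{b\bar b}\bigr)$. So $u=bw$ and $v=b(1-w)$, where $w$ is a root of
\[
h(Y)=Y^2-Y+c^{-1},\qquad c=N(b)\in F^\times.
\]
Note that $h$ has coefficients in $F$. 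This matches the parametrization used in \cref{thm:cubic 2-sum}.

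\emph{Converse.} Take any root $w\in L$ of $h$. Then $w$ and $1-w$ are the two roots of $h$, which is a polynomial over $F$. If $h$ is irreducible over $F$, its roots are conjugate, so $N(w)=w(1-w)=c^{-1}$. Then $N(u)=N(b)N(w)=1$ and likewise $N(v)=1$, so $b=u+v$ works.

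If instead $h$ has roots in $F$, then $N(w)=w^2$. We would need $w^2=c^{-1}$ and $(1-w)^2=c^{-1}$, which forces $w=1-w$ or $w=-(1-w)$. The second is impossible. The first gives $2w=1$, which needs $\mathrm{char}\,F\neq 2$; then $w=1/2$ and $c^{-1}=1/4$, that is, $N(b)=4$.

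Conversely, $w$ is forced to be a root of $h$, so the irreducible case and this one degenerate case are the only possibilities. Therefore
\[
b\in U_{L/F}+U_{L/F}\iff h \text{ is irreducible over } F \text{ or } (\mathrm{char}\,F\neq 2 \text{ and } N(b)=4).
\]

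\emph{Case (a): odd characteristic.} The discriminant of $h$ is
\[
1-\frac{4}{c}=\frac{c-4}{c}.
\]
It has the same square class as $c(c-4)$. So $h$ is irreducible exactly when $N(b)(N(b)-4)$ is a nonsquare. Adding the degenerate case $N(b)=4$ gives $N(b)(N(b)-4)=0$, which is statement (a).

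\emph{Case (b): characteristic $2$.} Here $h=Y^2+Y+c^{-1}$, and the degenerate case cannot occur. By \cref{lem:AS reducible} with $p=2$, $h$ has a root in $F$ iff $\mathrm{Tr}_{F/\F_2}(c^{-1})=0$. So $h$ is irreducible iff $\mathrm{Tr}_{F/\F_2}(c^{-1})=1$, which is statement (b).

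\emph{Main obstacle.} The delicate step is the split-case analysis. One has to show that a root of $h$ lying in $F$ yields norm-one elements only when $w=1-w$. One also has to confirm that every decomposition $b=u+v$ really arises from a root of $h$, so that no other solutions are missed.
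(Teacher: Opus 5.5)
Your proof is correct and follows the paper's route: both write $u=bw$, $v=b(1-w)$, reduce to $w$ being a root of $Y^2-Y+N(b)^{-1}$, and isolate $w\in F$ as the degenerate case $N(b)=4$ (possible only in odd characteristic); both then finish with the discriminant in odd characteristic and the Artin--Schreier trace criterion in characteristic $2$. The only difference is how the quadratic is reached: the paper gets $\mathrm{Tr}(w)=1$ from the identity $N(1-w)=1-\mathrm{Tr}(w)+N(w)$, whereas you conjugate $b=u+v$ to obtain $uv=b/\bar b$ and then apply Vieta's formulas.
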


\begin{proof}
    Suppose $b\in U_{L/F}+U_{L/F}$. We can write $b=u+v$, where $u\in U_{L/F}, v\in U_{L/F}$. Write
    \[
    u=bw,\quad v=b(1-w), \quad w\in L.
    \]
    We have $N(w)=N(1-w)=\dfrac{1}{N(b)}$. Now
    \[
  N(1-w) = (1-w) (1-w)^q=(1-w)(1-w^q)=1-(w+w^q)+w^{q+1}=1-{\rm Tr}(w) + N(w).
    \]
    This implies  ${\rm Tr}(w)=1$.

   (a) First, we consider the case where $q$ is odd. If $w\in F$ then from ${\rm Tr}(w)=1$ we see that $w=1/2$ and hence $N(b)=1/N(w)=4$. We assume that $w\not\in F^\times$. Since $w$ is a root of equation $X^2- X +\dfrac{1}{N(b)}=0$, we conclude that $\Delta=1-\dfrac{4}{N(b)}$ is not a perfect square in $F^\times$. Hence $N(b)(N(b)-4)\not\in (F^\times)^2$.

     Conversely, if $N(b)=4$ then let $u=v=b/2$ then $N(u)=N(v)=1$ and $b=u+v$. Hence $b\in U_{L/F}+U_{L/F}$. If $N(b)(N(b)-4)\not\in (F^\times)^2$ then
      $\Delta=1-\dfrac{4}{N(b)}$ is  not a perfect square in $F^\times$. Let $w$ be a root of the irreducible polynomial $X^2- X +\dfrac{1}{N(b)}$. Hence ${\rm Tr}(w)=1$ and $N(w)=\dfrac{1}{N(b)}$. Thus
     \[
     N(1-w)= 1-{\rm Tr}(w) + N(w)=N(w)=\dfrac{1}{N(b)}.
     \]
     Let $u=bw$ and $v=b(1-w)$. Then $N(u)=N(v)=1$ and hence $u\in U_{L/F}$, $v\in U_{L/F}$. Clearly $b=u+v\in U_{L/F}+U_{L/F}$.

(b)    Now we consider the case where $q$ is even. Since ${\rm Tr}(w)=1$ and $N(w)=\dfrac{1}{N(b)}$, $w$ is not in $F$ and the minimal polynomial of $w$ over $F$ is 
    \[
    X^2+X + \dfrac{1}{N(b)}.
    \]
    This polynomial is irreducible over $F$, therefore it has no zero in $F$. By the previous lemma ${\rm Tr}_{F/\F_2}(\dfrac{1}{N(b)})\not=0$. We obtain that 
    \({\rm Tr}_{F/\F_2}\left(\dfrac{1}{N(b)}\right)=1. \)
    
        Conversely, suppose ${\rm Tr}_{F/\F_2}\left(\dfrac{1}{N(b)}\right)=1$. Then the polynomial  $X^2+X + \dfrac{1}{N(b)}$ is irreducible over $F$. We choose $w\in L$ such that 
    \[
    N(w)=\dfrac{1}{N(b)},\quad {\rm Tr}(w)=1,
    \]
    i.e., we choose $w\in L$ as one of the roots of equation $X^2-X+\dfrac{1}{N(b)}=0$. Then $N(1-w)=1-{\rm Tr}(w)+N(w)=N(w)=\dfrac{1}{N(b)}$. Let $u=bw$ and $v=b(1-w)$. Then $N(u)=N(v)=1$ and hence $u\in U_{L/F}$, $v\in U_{L/F}$. Clearly $b=u+v\in U_{L/F}+U_{L/F}$.
\end{proof}

\begin{prop}
\label{prop:quadratic 2-sum}
Let $F=\F_q$ and $L=\F_{q^2}$ be a quadratic extension of $F$. Then $L$ has the 2-sum property with respect to $U_{L/F}^{\pm}$ if and only if $q=2$ or $q=3$.
\end{prop}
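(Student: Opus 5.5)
Since $[L:F]=2$, we have $N(-1)=1$ and hence $U_{L/F}^{\pm}=U_{L/F}$. The question is therefore whether every $b\in L$ lies in $U_{L/F}+U_{L/F}$. The element $b=0$ is always fine, since $0=1+(-1)$. For $b\neq 0$ the membership depends only on $c=N(b)$, by \cref{lem:U+U}. The norm map $N\colon L^\times\to F^\times$ is surjective, so the 2-sum property holds if and only if every $c\in F^\times$ satisfies the criterion of \cref{lem:U+U}. The plan is to test this criterion case by case in $q$.

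\textbf{Odd $q$.} The condition is that for every $c\in F^\times$, either $c(c-4)=0$ or $c(c-4)$ is a nonsquare.
\begin{itemize}
\item For $q=3$: $F^\times=\{1,2\}$ and $4=1$. The value $c=1$ gives $c(c-4)=0$, and $c=2$ gives $2\cdot 1=2$, which is a nonsquare mod $3$. So the property holds.
\item For $q\geq 5$ we need a counterexample $c$, i.e.\ some $c\ne 0,4$ with $c(c-4)$ a nonzero square.
\end{itemize}
To find it, parametrize: $c(c-4)=s^2$ with $s\neq 0$ is a conic with a rational point. Write $c-4=t^2c$, i.e.\ $c(1-t^2)=4$, so $c=4/(1-t^2)$ for $t\neq\pm1$. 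Then $c-4=4t^2/(1-t^2)$ and $c(c-4)=16t^2/(1-t^2)^2$, which is a nonzero square as soon as $t\neq 0$. So we need $t\notin\{0,1,-1\}$. Such a $t$ exists whenever $q\geq 5$ (for $q=3$ there is none, consistent with the above). The resulting $c$ is automatically nonzero and different from $4$. Hence the property fails for all odd $q\geq 5$.

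\textbf{Even $q$.} The condition is $\mathrm{Tr}_{F/\F_2}(1/c)=1$ for every $c\in F^\times$, i.e.\ every nonzero element of $F$ has absolute trace $1$.
\begin{itemize}
\item For $q=2$ the only element is $1$, and $\mathrm{Tr}(1)=1$. So the property holds.
\item For $q=2^m$ with $m\geq 2$, the trace is a surjective $\F_2$-linear map $F\to\F_2$. Its kernel has $2^{m-1}\geq 2$ elements, so it contains a nonzero $a$. Taking $c=1/a$ gives trace $0$, so some $b$ with $N(b)=c$ is not in $U+U$.
\end{itemize}

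The only real obstacle is producing the nonsquare-failure witness for odd $q\geq 5$, and the conic parametrization above handles it. Everything else follows directly from \cref{lem:U+U} and surjectivity of the norm.
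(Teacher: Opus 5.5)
Your proof is correct and follows essentially the paper's route: reduce via \cref{lem:U+U} and surjectivity of the norm, using the same failure witnesses. For odd $q\ge 5$, the paper's choice $N(b)=4/(1-4a)$ with $a$ a nonzero square, $a\neq 1/4$, is exactly your $c=4/(1-t^2)$ with $a=t^2/4$; for even $q\ge 4$ both use a nonzero trace-zero element. The only difference is that for $q=2,3$ the paper lists explicit decompositions in $\F_4$ and $\F_9$, whereas you read these positive cases off the same lemma, which is equally valid.
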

\begin{proof}
If  \(F=\mathbb{F}_2\), then  \(L=\mathbb{F}_4 =\{0,1,\alpha,1+\alpha\},\alpha^2+\alpha+1=0. \)
We have \( U_{L/F}=L^\times.\)
Furthermore,
\[
0=1+1,\quad
1=\alpha+(\alpha+1),\quad
\alpha=1+(\alpha+1),\quad
\alpha+1=\alpha+1.
\]

If $F=\mathbb{F}_3$, then \(L=\mathbb{F}_9=\{a+bi:a,b\in\mathbb{F}_3\},\quad i^2=-1. \)
Then \(U_{L/F}=\{\pm 1,\pm i\}.\)
Hence
\[
0=1+(-1),\quad
1=(-1)+(-1),\quad
-1=1+1,
\]
\[
i=(-i)+(-i),\quad
-i=i+i,
\]
\[
1+i=1+i,\quad
1-i=1+(-i),\quad
-1+i=-1+i,\quad
-1-i=-1-i.
\]
Thus for $q=2$ or $q=3$, $L$ has the $2$-sum property with respect to $U_{L/F}^{\pm}=U_{L/F}$.

Assume now that $q\geq 4$.  Since $[L:F]=2$, $U_{L/F}^{\pm}=U_{L/F}=:U=\{u\in L^\times: N(u)=1\}$. 
    
  First, we consider the case where $q$ is odd. We choose any $a$, which is a nonzero square in $F$ and $a\not=1/4$ ({this is possible since $q>3$}). Since the norm map $N\colon L^\times \to F^\times$ is surjective, we can choose $b\in L^\times$ such that $N(b)=\dfrac{4}{1-4a}$. Clearly $N(b)\not=4$ and $1-\dfrac{4}{N(b)}=4a\in (F^\times)^2$. Hence by Lemma~\ref{lem:U+U}, $b$ is not in $U_{L/F}+U_{L/F}$. Hence $L$ does not have the 2-sum property with respect to $U_{L/F}^{\pm}$.

    Now we consider the case where $q$ is even. 
    Since $q\geq 4$, there is a nonzero $c\in F$ such that ${\rm Tr}_{F/\F_2}(c)=0$. (The map ${\rm Tr}_{F/\F_2}\colon F\to \F_2$ is  $\F_2$-linear.) Now we choose $b\in L^\times$ such that $N(b)=\dfrac{1}{c}$. Clearly for such an element $b$, we have $ {\rm Tr}_{F/\F_2}\left(\dfrac{1}{N(b)}\right)={\rm Tr}_{F/\F_2}(c)=0$. Hence by Lemma~\ref{lem:U+U}, $b$ is not in $U_{L/F}+U_{L/F}$. Hence $L$ does not have the 2-sum property with respect to $U_{L/F}^{\pm}$.
\end{proof}

\begin{lem} 
\label{lem:N(b-u)=d}
 Let $F=\F_q$ and  $L=\F_{q^2}$ be a quadratic extension of $F$. Let $b\in L^\times$, $c=N(b)\in F^\times$ and  $d\in F^\times$. The following conditions are equivalent.
 \begin{enumerate}
     \item There exists $u\in U_{L/F}$ such that $N(b-u)=d$.
     \item There exists $w\in L^\times$ such that ${\rm Tr}(w)=c+1-d$ and $N(w)=c$.
 \end{enumerate} If  $q$ is odd then these equivalent conditions (1) and (2) are also equivalent to 
 \begin{enumerate}
     \item[(3)] $(c+1-d)^2-4c$ is  0 or a nonsquare.
 \end{enumerate}
If  $q$ is even then these equivalent conditions (1) and (2) are also equivalent to 
 \begin{enumerate}
     \item[(4)] $(c+1-d)=0$ or ${\rm Tr}_{F/\F_2}(\dfrac{c}{(c+1-d)^2})=1$.
 \end{enumerate}
\end{lem}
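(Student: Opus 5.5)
The plan is to mirror the proof of Lemma~\ref{lem:U+U}, using the substitution $w=b\bar u$, where $\bar u=u^q$ is the conjugate of $u$.

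\textbf{Step 1: (1) $\Leftrightarrow$ (2).} For $u\in U_{L/F}$ we have $u^q=u^{-1}$. Expanding the norm,
\[
N(b-u)=(b-u)(b^q-u^q)=N(b)-(bu^q+b^qu)+N(u)=c+1-{\rm Tr}(bu^q).
\]
Put $w=bu^q$. Then $N(w)=N(b)N(u)=c$ and ${\rm Tr}(w)=c+1-N(b-u)$. Hence (1) gives $w\in L^\times$ with ${\rm Tr}(w)=c+1-d$ and $N(w)=c$.

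Conversely, suppose $w\in L^\times$ satisfies (2). Set $u^q=w/b$, that is, $u=(w/b)^{q}$; equivalently $u=\overline{w/b}$. Then $N(u)=N(w)/N(b)=1$, and the same computation gives $N(b-u)=c+1-{\rm Tr}(w)=d$.

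\textbf{Step 2: reduce (2) to a quadratic over $F$.} Write $t=c+1-d$. Elements $w\in L$ with prescribed trace $t$ and norm $c$ are exactly the roots in $L$ of $X^2-tX+c\in F[X]$. Every quadratic over $F$ splits in $L=\F_{q^2}$, so we always get roots in $L$. The subtlety is that when the polynomial splits over $F$ with distinct roots $w_1\ne w_2$, the root $w_1$ has trace $2w_1\neq t$ and norm $w_1^2$, not $c$.

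So (2) holds if and only if the polynomial is either irreducible over $F$ or has a double root $w_0\in F$. In the irreducible case the roots are conjugate, so the trace and norm of $w$ equal the coefficients. In the double-root case $w_0\in F$ has ${\rm Tr}(w_0)=2w_0=t$ and $N(w_0)=w_0^2=c$. Also $w\neq0$ automatically, since $c\neq0$.

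\textbf{Step 3: translate into the criteria.}

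For $q$ odd, the polynomial has a double root exactly when the discriminant $t^2-4c$ is $0$. It is irreducible exactly when $t^2-4c$ is a nonsquare. This gives (3).

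For $q$ even, a double root occurs exactly when $t=0$: then $X^2+c$ has the double root $\sqrt c\in F$, since squaring is bijective on $F$. If $t\neq0$, substitute $X=tY$ to get $t^2(Y^2+Y+c/t^2)$. By Lemma~\ref{lem:AS reducible} with $p=2$, the polynomial $Y^2+Y+c/t^2$ has a root in $F$ if and only if ${\rm Tr}_{F/\F_2}(c/t^2)=0$. So irreducibility is equivalent to ${\rm Tr}_{F/\F_2}(c/t^2)=1$, which gives (4).

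\textbf{Main obstacle.} The one delicate point is the distinct-roots-in-$F$ case in Step 2. There, roots exist in $L$ but the trace/norm identification fails, so one must check that no other element of $L$ can work. This holds because any $w\in L$ with ${\rm Tr}(w)=t$ and $N(w)=c$ is a root of $X^2-tX+c$. If $w\in F$, then $2w=t$ and $w^2=c$, which forces a double root. If $w\notin F$, the polynomial is its minimal polynomial and hence irreducible.
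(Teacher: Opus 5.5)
Your proof is correct and follows the paper's argument essentially step for step. You use the same substitution $w=u^qb$, and your $u=(w/b)^q$ is exactly the paper's $u=b/w$, since $N(w/b)=1$. You then analyze $X^2-(c+1-d)X+c$ in the same way: via its discriminant for $q$ odd, and via Lemma~\ref{lem:AS reducible} for $q$ even. The only differences are cosmetic. You isolate the criterion ``irreducible over $F$ or a double root in $F$'' once, before splitting into cases. Also, your phrase ``exactly the roots'' should read ``among the roots'', as your own final paragraph makes clear.
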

\begin{proof} (1) $\Rightarrow (2)$:     Suppose that there exists $u\in U_{L/F}$ such that $N(b-u)=d$. We have
    \[d=N(b-u)=(b-u)(b^q-u^q)= b^{q+1} +u^{q+1}- ub^q-u^qb=c+1- ub^q-u^qb.\]
   Note that $u^{q+1}=1$ hence  \[c+1-d= ub^q+u^qb= (u^qb)^q+u^qb={\rm Tr}(u^qb).\]
   Set $w:=u^qb=u^{-1}b\in L^\times$. Then ${\rm Tr}(w)=c+1-d$ and  
    $N(w)=N(u^qb)=N(b)=c$. 

    (2) $\Rightarrow (1)$:  Suppose that there exists $w\in L^\times$ such that ${\rm Tr}(w)=c+1-d$ and $N(w)=c$. Set $u=b/w$. Then $N(u)=1$, i.e., $u\in U_{L/F}$, and $w=u^{-1}b=u^qb$.
   We have
   \[
   N(b-u)=(b-u)(b^q-u^q)= b^{q+1} +u^{q+1}- (ub^q+u^qb)=c+1-{\rm Tr}(w)=d.
   \]

    We assume further that $q$ is odd. 

    (2) $\Rightarrow$ (3): Suppose that there exists $w\in L^\times$ such that ${\rm Tr}(w)=c+1-d$ and $N(w)=c$.
   Then $w$ is a root of equation $X^2-(c+1-d)X+c=0$. If $w\in F^\times$ then ${\rm Tr}(w)=2w=c+1-d$ and $c=N(w)=w^2=(c+1-d)^2/4$. Hence $(c+1-d)^2-4c=0$. If $w\not\in F^\times$ then the discriminant $(c+1-d)^2-4c\not\in (F^\times)^2$.

   (3) $\Rightarrow$ (2): Suppose that $(c+1-d)^2-4c$ is  0 or a nonsquare. If $(c+1-d)^2-4c=0$, set $w=(c+1-d)/2\in F^\times$. Then ${\rm Tr}(w)=c+1-d$ and $N(w)=w^2=(c+1-d)^2/4=c$. 
   If $(c+1-d)^2-4c$ is nonsquare then $X^2-(c+1-d)X+c$ is irreducible over $F$. Let $w$ be a root of $X^2-(c+1-d)X+c=0$.  
 Then ${\rm Tr}(w)=c+1-d$ and $N(w)=c$.

 Now we assume that $q$ is even.
 
  (2) $\Rightarrow$ (4): Suppose that there exists $w\in L^\times$ such that ${\rm Tr}(w)=c+1-d$ and $N(w)=c$.
   Then $w$ is a root of equation $X^2-(c+1-d)X+c=0$. 
      If $w\in F^\times$ then $(c+1-d)={\rm Tr}(w)=2w=0$.  If $w\not\in F^\times$ then polynomial $X^2-(c+1-d)X+c$ is irreducible over $F$. In particular $c+1-d\not=0$. (If $c+1-d=0$ then since $F$ is a perfect field, $c=w^2$ for some $w\in F$ and $X^2+c=(X+w)^2$ is reducible.) Divide the polynomial $X^2-(c+1-d)X+c$ by $(c+1-d)^2$ and set $Y=X/(c+1-d)$. Then $w/(c+1-d)$ is a root of $Y^2-Y+\dfrac{c}{(c+1-d)^2}$. Hence this polynomial is irreducible. By Lemma~\ref{lem:AS reducible}, ${\rm Tr}_{F/\F_2}(\dfrac{c}{(c+1-d)^2})=1.     $

   (4) $\Rightarrow$ (2): If $c+1-d=0$  then argue as before we can write $c=w^2$ for some $w\in F$ and $X^2+c=(X+w)^2$. We have ${\rm Tr}(w)=0=c+1-d$ and $N(w)=w^2=c$. 
   
If   ${\rm Tr}_{F/\F_2}(\dfrac{c}{(c+1-d)^2})=1$ then $X^2-(c+1-d)X+c$ is irreducible over $F$. Let $w$ be a root of $X^2-(c+1-d)X+c=0$.  
 Then ${\rm Tr}(w)=c+1-d$ and $N(w)=c$. 
\end{proof}
\begin{lem} \label{lem:q=1 mod 6} Let $F=\F_q$ and  $L=\F_{q^2}$ be a quadratic extension of $F$.
     The equation 
    \[ u_1+u_2=1, \]
    has a solution such that $u_1, u_2 \in U^\pm_{L/F}$ if and only if  $q\equiv 0\pmod3$ or  $q \equiv 5 \pmod{6}$ or $q=2^m$ with $m$ is odd.  Similarly, the equation 
    \[   u_1+u_2+u_3=0, \]
has a solution such that $u_1, u_2, u_3 \in U^\pm_{L/F}$ if and only if $q\equiv 0\pmod3$ or $q \equiv 5 \pmod{6}$ or $q=2^m$ with $m$ is odd.
\end{lem}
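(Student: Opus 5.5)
We reduce both equations to a single statement about $U:=U_{L/F}=U_{L/F}^{\pm}$; recall $N(-1)=1$ since $[L:F]=2$. The two equations are equivalent. If $u_1+u_2=1$ with $u_i\in U$, then $u_1+u_2+(-1)=0$ is a solution of the second equation. Conversely, if $u_1+u_2+u_3=0$, divide by $-u_3$ to get $(-u_1/u_3)+(-u_2/u_3)=1$, and both summands lie in $U$ because $U$ is a group containing $-1$. So it suffices to decide when $1\in U+U$.

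We then apply \cref{lem:U+U} with $b=1$, where $N(b)=1$. For $q$ odd, part (a) says $1\in U+U$ if and only if $N(1)(N(1)-4)=-3$ is either $0$ or a nonsquare in $F=\F_q$.

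\begin{itemize}
\item If $3\mid q$, then $-3=0$ and the condition holds.
\item If $p\neq 3$ is odd, we must decide when $-3$ is a square in $\F_q$. This is the main computational step. The element $-3$ is a square in $\F_q$ exactly when $\F_q$ contains a primitive cube root of unity, because $\omega=(-1+\sqrt{-3})/2$ and conversely $(2\omega+1)^2=-3$. That happens exactly when $3\mid q-1$.
\item Hence for odd $q$ coprime to $3$, $-3$ is a nonsquare if and only if $q\equiv 2\pmod 3$. Combined with oddness, this is $q\equiv 5\pmod 6$. The square case $q\equiv 1\pmod 6$ gives no solution.
\end{itemize}

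For $q=2^m$, part (b) of \cref{lem:U+U} says $1\in U+U$ if and only if $\mathrm{Tr}_{F/\F_2}(1)=1$. Since $\mathrm{Tr}_{F/\F_2}(1)=m\cdot 1$, this holds exactly when $m$ is odd.

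Collecting the cases, $1\in U+U$ holds precisely when $q\equiv 0\pmod 3$, $q\equiv 5\pmod 6$, or $q=2^m$ with $m$ odd. Both assertions then follow from the reduction above. The only delicate point is the cube-root-of-unity characterization of $-3$ being a square. We also need \cref{lem:U+U}(a) to allow the value $0$, which it does through the case $N(b)=4$; in characteristic $3$ that case is $N(b)=1$, and $b=1=2^{-1}\cdot 2$ with $u=v=1/2=-1\in U$ checks it directly.
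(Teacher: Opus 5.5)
Your proof is correct, but it takes a different route from the paper's. The paper does not use \cref{lem:U+U}. It argues directly from the fact that $U_{L/F}$ is the cyclic subgroup of order $q+1$ in $L^\times$, on which the Frobenius $x\mapsto x^q$ acts as inversion. Applying Frobenius to $u_1+u_2=1$ gives $u_1^{-1}+u_2^{-1}=1$, hence $u_1u_2=u_1+u_2=1$, so $u_1,u_2$ are the roots of $T^2-T+1$. Solutions therefore exist exactly when this polynomial has a root in $U_{L/F}$: namely $-1$ in characteristic $3$; an element of order $6$ when $q$ is prime to $6$, which needs $6\mid q+1$; or an element of order $3$ when $q=2^m$, which needs $3\mid q+1$, i.e.\ $m$ odd. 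You instead specialize \cref{lem:U+U} to $b=1$ and reduce everything to two standard facts. The first is whether $-3$ is $0$ or a nonsquare in $\F_q$, which via primitive cube roots of unity gives $q\equiv 0\pmod 3$ or $q\equiv 5\pmod 6$. The second is that $\mathrm{Tr}_{F/\F_2}(1)=m\cdot 1$. The two arguments are the same computation seen from opposite sides. With $b=1$, the element $w$ of \cref{lem:U+U} is $u_1$ itself and satisfies $w^2-w+1=0$. You ask when this polynomial is irreducible over $F$ or has a double root; the paper asks when its roots lie in the norm-one subgroup. Your version is shorter given the earlier lemma and needs no separate nonexistence argument, since \cref{lem:U+U} is already an equivalence. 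The paper's version is self-contained and shows additionally that the only solutions are $\{u_1,u_2\}=\{\zeta,\zeta^{-1}\}$ with $\zeta$ a root of $T^2-T+1$. You also prove both directions of the equivalence between the two equations, which the paper leaves partly implicit.
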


\begin{proof}
We have that 
\[
U_{L/F}^\pm = U_{L/F}=\{x\in \F_{q^2}^\times: N(x)=1\}=\{x^{q-1}: x\in \F_{q^2}^\times\}.
\]
Since $\F_{q^2}^\times$ is a cyclic group of order $q^2-1$,  $U_{L/F}$ is the unique cyclic group of order $q+1$ of $\F_{q^2}^\times$.

  Suppose $q\equiv 0\pmod 3$. Let $u_1=u_2=-1\in U_{L/F}$. Clearly $1=u_1+u_2$.

  Suppose that $q\equiv 5\pmod 6$. Then $6\mid q+1$. Thus there exists $u_1$ in the cyclic group $U_{L/F}$ such that $u_1$ is of order 6. This implies that $u_1$ satisfies the equation $X^2-X+1=0$. Let $u_2=\dfrac{1}{u_1}$. Then $u_2\in U_{L/F}$ and $u_1+u_2=1$, as desired.

Suppose that $q=2^m$ with $m$ odd. Then $3\mid q+1$. Thus there exists $u_1$ in the cyclic group $U_{L/F}$ such that $u_1$ is of order 3. This implies that $u_1$ satisfies the equation $X^2-X+1=X^2+X+1=0$. Let $u_2=\dfrac{1}{u_1}$. Then $u_2\in U_{L/F}$ and $u_1+u_2=1$, as desired.

We show that for the two remaining cases where $q\equiv 1\pmod 6$ or $q=2^m$  with $m$ even, the equation $1=u_1+u_2$ has no solution $u_1,u_2\in U_{L/F}$. Assume for a contradiction that $1=u_1+u_2$, where $u_1,u_2\in U_{L/F}$.
   Then, applying the Frobenius automorphism $\sigma_q$ to both sides, we get 
    \[1=u_1^{q}+u_2^{q}=u_1^{-1}+u_2^{-1}.\]
    Thus $u_1u_2=u_1+u_2=1$. 
    Therefore, $u_1, u_2$ are roots of the equation $T^2-T+1=0.$

   If $q\equiv 1\pmod 6$  then $u_1$ is an element of order 6 in the cyclic subgroup $U_{L/F}$.
    Thus $6\mid q+1$, which contradicts the condition $q\equiv 1\pmod 6$.

   If $q=2^m$  with $m$ even then $q\equiv 1\pmod 3$  and $u_1$ is an element of order 3  in the cyclic subgroup $U_{L/F}$.
    Thus $3\mid q+1$, which contradicts the condition $q\equiv 1\pmod 3$.

    For the second equation, by dividing both sides by $-u_3$ and noting that $-1\in U_{L/F}$ we get the equation $u'_1+u'_2=1$, where $u'_1=\dfrac{u_1}{-u_3}\in U_{L/F}$ and $u'_2=\dfrac{u_2}{-u_3}\in U_{L/F}$. The result then follows from the first part. 
\end{proof}

\begin{lem}     Let $F=\F_q$, where $q$ is odd, and let $c\in F^\times$. 
Set
\[
A=\{d\in F: (c+1-d)^2-4c=0 \text{ or nonsquare}\},
\]
and 
\[
B=\{d\in F: d(d-4)=0 \text{ or nonsquare}\}
\]
Then \[
|A|=\begin{cases} \dfrac{q+1}{2} & \text{ if $c$ is a nonsquare} \\
\dfrac{q+3}{2} & \text{ if $c$  is a square,}
\end{cases}
\]
and $|B|=\dfrac{q+3}{2}$.    
\end{lem}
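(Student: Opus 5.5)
The plan is to reduce both counts to a single count: for a fixed $\gamma\in F^\times$, count the $e\in F$ such that $e^2-\gamma$ is $0$ or a nonsquare. I would use the substitution $e=c+1-d$ for $A$ and the completion of the square $d(d-4)=(d-2)^2-4$ for $B$.

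\textbf{Reduction.} The map $d\mapsto e=c+1-d$ is a bijection $F\to F$. Hence
\[
|A|=\#\{e\in F: e^2-4c=0 \text{ or a nonsquare}\}.
\]
Likewise $d\mapsto e=d-2$ is a bijection, and $d(d-4)=e^2-4$, so
\[
|B|=\#\{e\in F: e^2-4=0 \text{ or a nonsquare}\}.
\]
So it suffices to compute $Z(\gamma)=\#\{e: e^2-\gamma \text{ is } 0 \text{ or a nonsquare}\}$ for $\gamma\in F^\times$. We then apply it with $\gamma=4c$ and $\gamma=4=4\cdot 1$. Since $4$ is a square and $q$ is odd, $4c$ is a square exactly when $c$ is, and $4$ itself is a square, so the answer for $B$ is the square case of the answer for $A$.

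\textbf{Counting via a conic.} Work with the complement. Let $z$ be the number of $e$ with $e^2=\gamma$; then $z=2$ if $\gamma$ is a square and $z=0$ otherwise. Count the pairs $(e,s)\in F^2$ with $e^2-s^2=\gamma$. Since $q$ is odd, the change of variables $x=e-s$, $y=e+s$ is invertible, and the equation becomes $xy=\gamma$. This has exactly $q-1$ solutions, because $x\in F^\times$ is arbitrary and $y=\gamma/x$.

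Among these pairs, those with $s=0$ number exactly $z$. Each $e$ for which $e^2-\gamma$ is a \emph{nonzero} square contributes exactly two pairs, namely $\pm s$ with $s\neq 0$. Therefore the number of $e$ with $e^2-\gamma\in(F^\times)^2$ is $(q-1-z)/2$. Consequently
\[
Z(\gamma)=q-\frac{q-1-z}{2}=\frac{q+1+z}{2}.
\]
This gives $(q+3)/2$ when $\gamma$ is a square and $(q+1)/2$ when it is not.

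\textbf{Conclusion.} Taking $\gamma=4c$ gives the two stated values of $|A|$. Taking $\gamma=4$, a square, gives $|B|=(q+3)/2$.

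There is no real obstacle. The only points needing care are that $q$ odd is used both to make $(e,s)\mapsto(e-s,e+s)$ invertible and to guarantee that $4$ is a nonzero square, and that the nonzero-square case is counted with multiplicity two while the zero case is counted once.
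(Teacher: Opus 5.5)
Your proof is correct. It follows the same outline as the paper's: shift the variable, then sort the values of the quadratic into zero, nonzero square, and nonsquare. The key input is different, though.

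\textbf{The paper's route.} It cites the character-sum evaluations $\sum_{a\in F}\chi(a^2-4c)=-1$ and $\sum_{d\in F}\chi(d(d-4))=-1$ from Lidl--Niederreiter, Theorem 5.48. It then solves the linear system $m+n=q-s$, $m-n=-1$ for the number $n$ of nonsquare values.

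\textbf{Your route.} You count the points of $e^2-s^2=\gamma$ directly, using the invertible change of variables to $xy=\gamma$. The resulting identity $q-1=z+2\cdot\#\{e: e^2-\gamma\in (F^\times)^2\}$ is just $\sum_{e}\bigl(1+\chi(e^2-\gamma)\bigr)=q-1$, that is, $\sum_e\chi(e^2-\gamma)=-1$. So you are in effect proving the needed case of the cited theorem from scratch. You also complete the square, $d(d-4)=(d-2)^2-4$, which makes $|B|$ literally the square case $\gamma=4$ of the count for $|A|$. The paper instead runs the cited theorem separately on $d(d-4)$.

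\textbf{Comparison.} Your argument is self-contained and shows exactly where $q$ odd is used. The paper's argument is shorter given the citation and would apply unchanged to any quadratic with nonzero discriminant, without completing the square.
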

\begin{proof}
    Let $\chi$ be the quadratic character of $\F_q$, with $\chi(0)=0$. 
    We first count the number of elements in $B$. By \cite[Theorem 5.48]{LN1997}, we have
    \[
    \sum_{d\in F} \chi( d(d-4))=-1.
    \]
    Let $m$ and $n$  be the number of $d$ such that $d(d-4)$ is, respectively, nonzero square and nonsquare.  We have
    \[ m+n=q-2,\qquad m-n=-1.\]
    Hence $n=\dfrac{q-1}{2}$. Adding two values $d=0,4$ for which $d(d-4)=0$ we get $|B|=\dfrac{q+3}{2}$.

    Now we count the number of elements in $A$.  Let $a=c+1-d$. We see that 
    \[
    |A| =\# \{a\in F: a^2-4c=0 \text{ or nonsquare}\}.
    \]
    Again by \cite[Theorem 5.48]{LN1997}, we have
    \[
       \sum_{a\in F} \chi( a^2-4c)=-1.
    \]
    Let $s$ be the number of roots in $F$ of $a^2-4c=0$. Clearly $s=0$ if $c$ is nonsquare, and $s=2$ if $c$ is a nonzero square. 
     Let $m$ and $n$  be the number of $a$ such that $a^2-4c$ is, respectively, nonzero square and nonsquare.  We have
    \[ m+n=q-s,\qquad m-n=-1.\]
    Hence $n=\dfrac{q-s+1}{2}$. Adding $s$ values of $a$ for which $a^2-4c=0$ we get $|A|=\dfrac{q+s+1}{2}$.
\end{proof}
\begin{lem}
\label{lem:3-sum}
    Let $F=\F_q$ and  $L=\F_{q^2}$ be a quadratic extension of $F$. If $b\in L^\times$ then $b\in U_{L/F}+U_{L/F}+U_{L/F}$.
\end{lem}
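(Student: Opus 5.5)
The plan is to reduce the $3$-sum statement to the $2$-sum criterion of Lemma~\ref{lem:U+U}. For $b\in L^\times$ we look for a single $u\in U_{L/F}$ such that $b-u\in U_{L/F}+U_{L/F}$. Then $b=u+(b-u)$ is a sum of three elements of $U_{L/F}$.

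Put $c=N(b)\in F^\times$. By Lemma~\ref{lem:N(b-u)=d}, the nonzero norms $d=N(b-u)$ attainable with $u\in U_{L/F}$ are exactly the $d\in F^\times$ satisfying condition (3) (for $q$ odd) or condition (4) (for $q$ even). By Lemma~\ref{lem:U+U}, a nonzero element of norm $d$ lies in $U_{L/F}+U_{L/F}$ exactly when $d(d-4)$ is $0$ or a nonsquare ($q$ odd), or when ${\rm Tr}_{F/\F_2}(1/d)=1$ ($q$ even). So it suffices to show that these two subsets of $F$ meet in a nonzero $d$.

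\textbf{Case $q$ odd.} We use the counting lemma just proved, with its sets $A$ and $B$. Since $|A|\ge \frac{q+1}{2}$ and $|B|=\frac{q+3}{2}$, we get $|A|+|B|\ge q+2>q$, so $A\cap B\neq\emptyset$. Pick $d\in A\cap B$ and split into two subcases.
\begin{itemize}
\item If $d\neq 0$, Lemma~\ref{lem:N(b-u)=d} (3)$\Rightarrow$(1) produces $u\in U_{L/F}$ with $N(b-u)=d$. Then $b-u\neq0$, and since $d\in B$, Lemma~\ref{lem:U+U}(a) gives $b-u\in U_{L/F}+U_{L/F}$.
\item If $d=0$, then $0\in A$ means $(c+1)^2-4c=(c-1)^2$ is $0$ or a nonsquare. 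This forces $c=1$, i.e.\ $b\in U_{L/F}$, and we simply write $b=b+1+(-1)$, using $-1\in U_{L/F}$.
\end{itemize}

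\textbf{Case $q$ even.} Here we count directly. Let $A'$ be the set of $d\in F$ with $a:=c+1-d$ either $0$ or satisfying ${\rm Tr}_{F/\F_2}(c/a^2)=1$. Squaring is a bijection of $F^\times$, so $a\mapsto c/a^2$ permutes $F^\times$. Since ${\rm Tr}_{F/\F_2}$ is onto and $\F_2$-linear, exactly $q/2$ elements of $F^\times$ have trace $1$ (and $0$ has trace $0$). Hence $|A'|=q/2+1$. Let $B'=\{d\in F^\times:{\rm Tr}_{F/\F_2}(1/d)=1\}$; by the same reasoning $|B'|=q/2$. Then $|A'|+|B'|=q+1>q$, so there is $d\in A'\cap B'$, and automatically $d\neq0$. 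Lemma~\ref{lem:N(b-u)=d} (4)$\Rightarrow$(1) gives $u\in U_{L/F}$ with $N(b-u)=d$, and Lemma~\ref{lem:U+U}(b) gives $b-u\in U_{L/F}+U_{L/F}$.

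The main point to watch is the degenerate value $d=0$. Lemmas~\ref{lem:U+U} and \ref{lem:N(b-u)=d} only treat nonzero elements and norms, so one must check that the counting either avoids $d=0$ (even case) or that $d=0$ forces $b\in U_{L/F}$ (odd case). Apart from that, the pigeonhole count $|A|+|B|>q$ does the work.
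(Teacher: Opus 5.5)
Your proof is correct, and it departs from the paper's route in the even case.

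\textbf{Odd case.} This is essentially the paper's argument. The paper observes that $|A|+|B|\ge q+2$ forces $|A\cap B|\ge 2$, so a nonzero $d$ can be chosen directly. You instead take any $d\in A\cap B$ and handle $d=0$ by showing it forces $c=1$, then write $b=b+1+(-1)$. Either way works.

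\textbf{Even case.} The paper uses the sets $\{d\neq 0:{\rm Tr}_{F/\F_2}(1/d)=1\}$ and $\{d\neq c+1:{\rm Tr}_{F/\F_2}(e/(c+1-d))=1\}$, where $c=e^2$. Excluding the degenerate value $d=c+1$ leaves both sets of size exactly $q/2$, so pigeonhole fails by one. The paper then needs an extra argument: Vieta applied to ${\rm Tr}(X)+1$ gives $\sum_{d\in A}d=1$ and hence $\sum_{d\in B}d=-e$. If $A$ and $B$ partitioned $F$, one would get $e=1$ and $B=A$, a contradiction. That step uses $|A|$ even, which is why the paper requires $m\ge 2$ and treats $q=2$ separately.

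You keep $d=c+1$, which condition (4) of Lemma~\ref{lem:N(b-u)=d} legitimately allows. There one takes $w\in F$ with $w^2=c$, and indeed $N(b-b/w)=c(1+1/w^2)=c+1$. This gives $|A'|=q/2+1$, so pigeonhole closes at once, and $d\neq 0$ comes for free from $B'$. Your route is shorter, parallels the odd case, and covers $q=2$ uniformly.

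What the paper's route buys is a slightly stronger conclusion: a valid $d\neq c+1$, i.e.\ one with $u^{-1}b\notin F$. That refinement is not needed for the lemma.
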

\begin{proof}
We first suppose that $q=2$. Then $L=\F_4=\{0,1,\alpha,1+\alpha\}$, where $\alpha^2+\alpha+1=0$. Clearly $N(1)=N(\alpha)=N(1+\alpha)=1$, i.e, every nonzero element in $L$ is $U_{L/F}$. Hence for each $b\in L^\times$, $b=b+b+b\in  U_{L/F}+U_{L/F}+U_{L/F}$.

Now we suppose that $q\geq 3$. Let $b\in L^\times$ and let $c=N(b)$. The idea is that we can choose $u\in U_{L/F}$ such that $b-u\in U_{L/F}+U_{L/F}.$

First we consider the case $q$ is odd. Set
\[
A=\{d\in F: (c+1-d)^2-4c=0 \text{ or nonsquare}\},
\]
and 
\[
B=\{d\in F: d(d-4)=0 \text{ or nonsquare}\}
\]
By the previous lemma, we see that 
 \[|A|+|B|=\dfrac{q+3}{2} + \frac{q+s+1}{2}=q+2 +\dfrac{s}{2}\geq q+2,\]
 where $s=0$ if $c$ is a nonsquare and $s=2$ if $c$ is square. 
Hence $|A\cap B|=|A|+|B| -|A\cup B|\geq 2$. Thus we can  choose a nonzero $d\in A\cap B$. Since $d\in A$ and $d\not=0$, by Lemma~\ref{lem:N(b-u)=d}, there exists $u\in U_{L/F}$ such that $N(b-u)=d$. By Lemma~\ref{lem:U+U}, $b-u\in U_{L/F}+U_{L/F}$ and we are done.

Now we consider the case where $q=2^m$ is even, $m\geq 2$. Since $F$ is perfect, we can write $c=e^2$, $e\in F^\times$. 
Set
\[A=\{d\in F: d\not=0, {\rm Tr}_{F/\F_2}(1/d)=1\},
\]
and
\[
B=\{ d\in F: d\not=c+1, {\rm Tr}_{F/\F_2}(\dfrac{e}{c+1-d})=1\}.
\]
Then $|A|=|B|=q/2$. In fact, since ${\rm Tr}_{F/\F_2}\colon F\to \F_2$ is a surjective group homomorphism, the number of $d'$ such that ${\rm Tr}_{F/\F_2}(d')=1$ is $q/2$. For any such $d'$, $d'\not=0$ and by letting $d=1/d'$, we see that $|A|=q/2$. Clearly, $d\in B$ if and only if $(c+1-d)/e\in A$ if and only if 
\[d\in (c+1) -eA:= \{c+1 -ea: a\in A\}.\]
Hence $B= (c+1)-eA$ and $|B|=q/2$. 

We claim that $\sum\limits_{d\in A} d=1$. In fact, consider the polynomial
\[
P(X) :={\rm Tr}(X)+1 := 1+ X+X^2+X^4+\cdots+ X^{2^{m-1}}.
\]
The set of roots of $P(X)$ is exactly the set 
\[A':=\{d'\in F: {\rm Tr}_{F/\F_2}(d')=1\}=\{1/d: d\in A\}.\]
We have $P(X)=\prod\limits_{d'\in A'}(X-d')$. From Vieta, we see that $\prod_{d'\in A'} d'=1$ and 
\[
\sum_{d'\in A'} \dfrac{1}{d'}\prod_{f\in A'}f=1. 
\]
Hence 
\[
1=\sum_{d'\in A'} \dfrac{1}{d'}=\sum_{d\in A} d. 
\]
Thus
\[
\sum_{d\in B} d=\sum_{d\in A} (c+1)-ed =(c+1)|A|- e\sum_{d\in A} d=-e.
\]
Here we note that $|A|=2^{m-1}$ is even and hence $(c+1)|A|=0$. 

Now we suppose that $A\cap B=\emptyset$. Since $|A|=|B|=q/2$, we see that $F$ is the disjoint union of $A$ and $B$. Hence
\[
0=\sum_{d\in F}d= \sum_{d\in A}d +\sum_{d\in B}d= 1-e.
\]
This forces $e=1$ and $c=e^2=1$. Thus $B=(c+1)-eA=-A=A$. This contradicts $A\cap B=\emptyset$.

Therefore, there exists $d\in A\cap B$. Note that for any $x\in F$, we have ${\rm Tr}_{F/\F_2}(x)={\rm Tr}_{F/\F_2}(x^2)$. Since $d\in B$, $1={\rm Tr}_{F/\F_2}(\dfrac{e^2}{(c+1-d)^2})={\rm Tr}_{F/\F_2}(\dfrac{c}{(c+1-d)^2})$. By Lemma~\ref{lem:N(b-u)=d}, there exists $u\in U_{L/F}$ such that $N(b-u)=d$. By Lemma~\ref{lem:U+U}, $b-u\in U_{L/F}+U_{L/F}$ and we are done.
\end{proof} 

The following two propositions completely answer the $n$-sum property of $L$ with respect to $U^\pm_{L/F}$ when $[L:F]=2.$ First, we deal with the $3$-sum property.

\begin{prop}
\label{prop:quadratic 3-sum}
    Let $F=\F_q$, and  $L=\F_{q^2}$ be a quadratic extension of $F$. Then $L$ has the 3-sum property with respect to $U^\pm_{L/F}$ if and only if $q\equiv 0\pmod 3$ or $q \equiv 5 \pmod{6}$ or $q=2^m$ with $m$ is odd.
\end{prop}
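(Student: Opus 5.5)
The proof splits according to whether the target element is zero. Since $[L:F]=2$, we have $N(-1)=1$, so $U^{\pm}_{L/F}=U_{L/F}$ throughout.

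First take a nonzero element $b\in L^\times$. By \cref{lem:3-sum}, $b\in U_{L/F}+U_{L/F}+U_{L/F}$ for every $q$, with no congruence condition needed. So every nonzero element of $L$ is always a sum of exactly three elements of $U^{\pm}_{L/F}$. It follows that $L$ has the $3$-sum property with respect to $U^{\pm}_{L/F}$ if and only if $0$ can be written as $u_1+u_2+u_3$ with $u_i\in U^{\pm}_{L/F}$.

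The zero element is handled by the second part of \cref{lem:q=1 mod 6}. It states that $u_1+u_2+u_3=0$ has a solution in $U^{\pm}_{L/F}$ exactly when $q\equiv 0\pmod 3$, or $q\equiv 5\pmod 6$, or $q=2^m$ with $m$ odd. Combining the two cases gives both directions of the equivalence at once.

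I expect no real obstacle, since the substantive work sits in \cref{lem:3-sum} and \cref{lem:q=1 mod 6}. Two consistency checks are still worth doing. For $q=2$ the condition ``$q=2^m$ with $m$ odd'' holds, and indeed $0=1+1+1$ in $\F_4$. For $q=3$ the condition $q\equiv 0\pmod 3$ holds, and $0=1+1+1$ since the characteristic is $3$. Both agree with the lemma.
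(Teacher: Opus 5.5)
Your argument is correct and is the paper's proof: it likewise handles nonzero elements with Lemma~\ref{lem:3-sum} and the element $0$ with the second part of Lemma~\ref{lem:q=1 mod 6}. One slip in your sanity check, which does not affect the proof: in $\F_4$ (characteristic $2$) one has $1+1+1=1$, not $0$, so the correct witness is $0=1+\alpha+\alpha^2$ with $\alpha^2+\alpha+1=0$.
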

\begin{proof} This follows from Lemma~\ref{lem:3-sum} and Lemma~\ref{lem:q=1 mod 6}.
\end{proof}
Next, we show that, in all cases the $4$-sum property holds. 
\begin{prop} \label{prop:degree-2}
    Let $F=\F_q$, and  $L=\F_{q^2}$ be a quadratic extension of $F$. Then $L$ always has the 4-sum property with respect to $U^\pm_{L/F}$.
\end{prop}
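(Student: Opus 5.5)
Since $[L:F]=2$, we have $U^\pm_{L/F}=U_{L/F}$. The plan is to reduce to Lemma~\ref{lem:3-sum}, which says that every nonzero element of $L$ lies in $U_{L/F}+U_{L/F}+U_{L/F}$. Given $b\in L$, we look for some $u\in U_{L/F}$ with $b-u\neq 0$. Lemma~\ref{lem:3-sum} then writes $b-u=u_1+u_2+u_3$ with $u_i\in U_{L/F}$, and hence $b=u+u_1+u_2+u_3$ is a sum of four elements of $U^\pm_{L/F}$.

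The existence of such a $u$ is immediate because $|U_{L/F}|=q+1\geq 3>1$. Concretely, take $u=1$ if $b\neq 1$, and take $u=-1$ if $b=1$ with $q$ odd. If $q$ is even and $b=1$, replace $1$ by any other element of $U_{L/F}$, which exists since $q+1\ge 3$. In particular the case $b=0$ is handled by $u=1$: Lemma~\ref{lem:3-sum} gives $-1=u_1+u_2+u_3$, so $0=1+u_1+u_2+u_3$.

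For $q=2$ the case $b=1$ needs a small check, since $U_{L/F}=L^\times=\{1,\alpha,1+\alpha\}$. Choosing $u=\alpha$ gives $b-u=1+\alpha\neq 0$, so the general argument still applies and no separate treatment is needed.

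I see no genuine obstacle here: all the hard analysis, namely the character-sum counting of the sets $A\cap B$ in both the odd and even characteristic cases, is already contained in Lemma~\ref{lem:3-sum}. The only point to verify is that the element we subtract is nonzero, which holds because $U_{L/F}$ has at least two elements. As an alternative cross-check, one could use \cref{rem:n-sum-upper} when the $3$-sum property holds. However, the direct argument covers every $q$ uniformly, including $q\equiv 1\pmod 6$ and $q=2^m$ with $m$ even, where the $3$-sum property fails.
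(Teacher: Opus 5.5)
Your proof is correct and is essentially the paper's argument: choose $u\in U_{L/F}$ with $b-u\neq 0$, which is possible since $|U_{L/F}|=q+1\geq 3$, and then apply Lemma~\ref{lem:3-sum} to $b-u$. The only difference is that the paper handles $q=2$ separately with explicit decompositions in $\F_4$, whereas you correctly note that Lemma~\ref{lem:3-sum} already covers $q=2$, so your uniform argument suffices.
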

\begin{proof} We first suppose that $q=2$.  Then  $L=\F_4=\{0,1,\alpha,1+\alpha\}$, where $\alpha^2+\alpha+1=0$. We have $U_{L/F}=L^\times$ and 
\[
0=1+1+1+1, \;1= \alpha^2+\alpha+1 +1,\; \alpha= \alpha^2+\alpha+1 +\alpha,\; \alpha^2= \alpha^2+\alpha+1 +\alpha^2.
\]
Hence $L$ has the 4-sum property with respect to $U^\pm_{L/F}$.

Now suppose that $q\geq 3$. Let $b\in L$ be an arbitrary element. Since $U^\pm_{L/F}=U_{L/F}$ is a cyclic group of order $q+1>2$, we can choose $u_1\in U_{L/F}$ such that $b-u_1\not=0$. By Lemma~\ref{lem:3-sum}, $b-u_1$ is in $U_{L/F}+U_{L/F}+U_{L/F}$, and we are done.
\end{proof}
As we explained after Theorem~\ref{thm:normalized-units-fields}, we have now completed a sharp answer to Question~\ref{question:normalized-units} providing a list of all $n$ for all finite nontrivial extensions of finite fields.

\section{Sums of units and perfect state transfer on graphs} \label{sec:pst}
Let $G$ be an undirected simple graph with adjacency matrix $A_G$. The continuous-time quantum walk on $G$ is given by $F(t) = \exp(\mathrm{i}A_G t)$. Perfect state transfer (PST) occurs in $G$ when there exist distinct vertices $a$ and $b$ and some positive real number $t$ such that $|F(t)_{ab}| = 1$. This phenomenon is first studied in \cite{christandl2004perfect} within the framework of quantum spin networks. Following this foundational work, numerous articles have investigated PST on arithmetic graphs (see \cite{bavsic2009perfect, cheung2011perfect, godsil2012state, saxena2007parameters} among others in this research direction). It is known that any regular graph exhibiting PST must be integral—that is, all its eigenvalues are integers (see \cite{godsil2012state, saxena2007parameters}). Cayley graphs are, therefore, particularly relevant because there is a rather complete classification of integral Cayley graphs (see \cite{godsil2025integral, klotz2007some, nguyen2024integral, so2006integral}). In this section, we study the relationship between the \textit{non-existence} of PST on a Cayley graph defined over a ring $R$ and the $n$-sum property of $R.$ To do spectral analysis, we will restrict ourselves to a class of rings called the $\Z/m$-symmetric Frobenius rings (see \cite{honold2001characterization, lamprecht1953allgemeine}). We remark that every finite ring is an algebra over $\Z/m$ for some $m$ (for example, $m$ could be the characteristic of $R$). This kind of ring was first studied by Lamprecht in his investigation into Gauss sums and Ramanujan sums. We recall that a $\Z/m$-algebra $R$ is called a finite symmetric Frobenius ring if it is equipped with a linear functional $\psi: R \to \Z/m$ such that 
\begin{enumerate}
    \item $\psi(ab)=\psi(ba)$ for $a,b \in R.$
    \item $\psi$ is non-degenerate, meaning that  the kernel of $\psi$ does not contain any left-ideal in $R.$
\end{enumerate}
Associated to $\psi$, we can define a complex-valued character $\chi$ of $(R,+)$ by the rule $\chi(a)= \zeta_m^{\psi(a)}$ where $\zeta_m$ is a fixed primitive $m$-th root of unity.  The non-degenerate property of $\psi$ implies that each character of $(R,+)$ is of the form $\chi_r$ where $\chi_r: R \to \C^{\times}$ such that $\chi_r(a)=\chi(ra).$
All examples that we discussed in \cref{sec:normalized_units} are symmetric Frobenius rings. Furthermore, all finite semisimple rings are symmetric Frobenius rings (see \cite{honold2001characterization}). For the rest of this section, we will assume that $R$ is a finite $\Z/m$-symmetric algebra.

For an undirected Cayley graph $\Gamma(R,S)$ over $R$, its spectrum is parametrized by $R$. More precisely, the spectrum is the multiset  $\{\lambda_r\}_{r \in R}$ 
where 
\[ \lambda_r = \sum_{s \in S} \chi_r(s) = \sum_{s \in S} \chi(rs) = \sum_{s \in S} \zeta_m^{\psi(rs)}.\]

In \cite{nguyen2026supercharacters}, following the method of \cite{bavsic2009perfect},  we prove the following 
\begin{prop} (\cite[Theorem 4.1]{nguyen2026supercharacters}) \label{prop:orthogonal}
Let $s \in R \setminus \{0\}.$ There exists perfect state transfer from $0$ to $s$ at time $t$ if and only if for all $r_1, r_2 \in R$
    \[ (\lambda_{r_1}-\lambda_{r_2}) \frac{t}{2 \pi} + \frac{\psi((r_1-r_2)s)}{m} \equiv 0 \pmod{1}.\]
    
\end{prop}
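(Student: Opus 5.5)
The plan is to follow the standard spectral method of \cite{bavsic2009perfect}, using only the character theory of the symmetric Frobenius ring $R$. The adjacency matrix $A$ of $\Gamma(R,S)$ is diagonalized by the characters $\chi_r$, $r\in R$. Concretely, the vectors $v_r=(\chi_r(x))_{x\in R}$ are orthogonal eigenvectors with $Av_r=\lambda_r v_r$; this uses $S=-S$ together with $\chi_r(x+s)=\chi_r(x)\chi_r(s)$. Since every character of $(R,+)$ has the form $\chi_r$ and the map $r\mapsto \chi_r$ is a bijection by non-degeneracy of $\psi$, the $v_r$ form a basis. This gives the entrywise formula
\[ F(t)_{0,s}=\frac{1}{|R|}\sum_{r\in R} e^{\mathrm{i}t\lambda_r}\,\overline{\chi_r(s)}=\frac{1}{|R|}\sum_{r\in R} e^{\mathrm{i}t\lambda_r}\,\zeta_m^{-\psi(rs)}. \]

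Next I analyze when this average of $|R|$ unit complex numbers has modulus one. That happens exactly when all the terms $e^{\mathrm{i}t\lambda_r}\zeta_m^{-\psi(rs)}$ coincide, by the equality case of the triangle inequality. Equivalently, for all $r_1,r_2$ we need $t(\lambda_{r_1}-\lambda_{r_2})-2\pi\bigl(\psi(r_1s)-\psi(r_2s)\bigr)/m\in 2\pi\Z$. Dividing by $2\pi$ and using linearity of $\psi$, $\psi(r_1s)-\psi(r_2s)=\psi((r_1-r_2)s)$, this becomes the stated congruence up to the sign of the $\psi$-term. Since $-\psi((r_1-r_2)s)=\psi((r_2-r_1)s)$, I will handle the sign by swapping $r_1$ and $r_2$, or equivalently by replacing $s$ by $-s$. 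The congruence is required for all pairs, so the condition is symmetric and the sign is immaterial.

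The converse direction follows by reading the same chain of equivalences backwards. If the congruence holds for all pairs, then all terms are equal to a common unimodular value $\gamma$, so $F(t)_{0,s}=\gamma$ and $|F(t)_{0,s}|=1$.

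The only step that needs care is justifying the spectral decomposition. I must check that $\lambda_r$ is real and that the $v_r$ are orthogonal, which amounts to checking that the $\chi_r$ are pairwise distinct characters. This is exactly where the non-degeneracy of $\psi$, namely that $\ker\psi$ contains no nonzero left ideal, enters. If $\chi_r=\chi_{r'}$, then $\psi((r-r')R)=0$, so the right ideal $(r-r')R$ lies in $\ker\psi$. Symmetry $\psi(ab)=\psi(ba)$ then converts this to $\psi(R(r-r'))=0$, a left ideal in $\ker\psi$, which forces $r=r'$. Everything else is routine.
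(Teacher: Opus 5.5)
Your route is the standard spectral method of \cite{bavsic2009perfect}. The paper does not reprove this statement; it cites \cite[Theorem 4.1]{nguyen2026supercharacters}, which uses that same method. Your diagonalization by the $\chi_r$, the formula for $F(t)_{0,s}$, the equality case of the triangle inequality, and the use of symmetry plus non-degeneracy of $\psi$ to show the $\chi_r$ are distinct are all fine.

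The one step that fails as written is the sign fix. Your computation gives
\[ \frac{t}{2\pi}(\lambda_{r_1}-\lambda_{r_2})-\frac{\psi((r_1-r_2)s)}{m}\equiv 0 \pmod 1 \quad\text{for all } r_1,r_2\in R. \]
Swapping $r_1$ and $r_2$ negates \emph{both} terms, so it only reproduces the same congruence multiplied by $-1$. It never changes the relative sign between the $\lambda$-term and the $\psi$-term, and the swap-symmetry of ``for all pairs'' is beside the point. Replacing $s$ by $-s$ does flip that sign, but then you have characterized PST from $0$ to $-s$, not to $s$. You still owe an argument that the two coincide, so ``equivalently'' is not justified.

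The missing ingredient is that the spectrum is even. Since $S=-S$, you have $\lambda_{-r}=\sum_{x\in S}\chi(r(-x))=\lambda_r$; this is the same fact that gives $\overline{\lambda_r}=\lambda_{-r}=\lambda_r$, i.e.\ reality of the eigenvalues. Apply your congruence to the pair $(-r_1,-r_2)$. The $\lambda$-difference is unchanged, while $\psi((-r_1+r_2)s)=-\psi((r_1-r_2)s)$, so you get exactly the stated congruence, and the converse holds the same way.

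Alternatively, $x\mapsto -x$ is an automorphism of $\Gamma(R,S)$ fixing $0$, so $|F(t)_{0,s}|=|F(t)_{0,-s}|$. Or you can compute the amplitude of $F(t)e_0$ at $s$ directly:
\[ F(t)_{s,0}=\frac{1}{|R|}\sum_{r\in R}e^{\mathrm{i}t\lambda_r}\zeta_m^{\psi(rs)}, \]
which yields the plus sign immediately. It equals $F(t)_{0,s}$ because $A$, and hence $\exp(\mathrm{i}tA)$, is symmetric. With this one-line repair your proof is complete.
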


Let us now explain the connection between the existence of PST on a Cayley graph and the $n$-sum property. 
Let $\Delta$ be the abelian group generated by the differences $r_1-r_2$ such that $\lambda_{r_1}=\lambda_{r_2}.$ \cref{prop:orthogonal} implies that if there exists a PST between $0$ and $s$ at time $t$, then $\psi(ds)=0$ for each $d \in \Delta. $ In particular, if $\Delta=R$, then the non-degeneracy of $\psi$ implies that $s=0$, which is a contradiction. 

Let $U$ be a subgroup of $R^{\times}$ such that $-1 \in U.$ The Cayley graph $\Gamma(R, S)$ is called an $U$-unitary Cayley graph if $S$ is stable under the left and right actions of $U$ (when $U=R^{\times}$, we recover the definition of a gcd-graph as explained in \cref{sec:sum_units}.) In \cite[Proposition 3.6]{nguyen2026supercharacters}, using supercharacter theory, we show that if $r_1 = u_1r_2 u_2$ where $u_1, u_2 \in U$, then $\lambda_{r_1}=\lambda_{r_2}.$
Consequently, if we denote $\Delta_U$ to be the abelian group generated by $r_1-r_2$ where $r_1, r_2 \in U$, then $\Delta_U \subset \Delta.$ We then have the following theorem. 

\begin{thm} \label{thm:pst-and-n-sum}
    Let $U \subset R^{\times}$ such that $-1 \in U.$ Suppose that $R$ has the $n$-sum property with respect to $U.$ Let $\Gamma(R,S)$ be an $U$-unitary Cayley graph. Then $\Gamma(R,S)$ has no PST. 
\end{thm}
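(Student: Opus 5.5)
The plan is to show that the group $\Delta$ introduced before the statement equals all of $R$. The paragraph preceding \cref{thm:pst-and-n-sum}, combined with \cref{prop:orthogonal}, then finishes the argument. If there were PST from $0$ to some $s\neq 0$ at time $t$, then taking $r_1,r_2$ with $\lambda_{r_1}=\lambda_{r_2}$ in \cref{prop:orthogonal} gives $\psi((r_1-r_2)s)\equiv 0 \pmod m$, hence $\psi(ds)=0$ for all $d\in\Delta$. If $\Delta=R$, then $\psi$ vanishes on the left ideal $Rs$, which is nonzero because it contains $s$. This contradicts non-degeneracy. Since $\Gamma(R,S)$ is vertex-transitive (translations are automorphisms), PST between arbitrary $a\neq b$ reduces to PST from $0$ to $b-a$, so it suffices to treat the source $0$.

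The key step is to prove $\Delta_U = R$, where (reading the definition in the intended way) $\Delta_U$ is the abelian group generated by the differences $r_1-r_2$ with $r_1=u_1r_2u_2$ for some $u_1,u_2\in U$. By \cite[Proposition 3.6]{nguyen2026supercharacters}, quoted above, such pairs have $\lambda_{r_1}=\lambda_{r_2}$, so $\Delta_U\subset\Delta$. Taking $r_2=1$ and $u_2=1$ shows $u-1\in\Delta_U$ for every $u\in U$; with $u=-1$ this gives $-2\in\Delta_U$. More usefully, taking $r_2=-1$ gives $u-(-1)=u+1\in\Delta$. Since $u-1\in\Delta$ as well, I would combine these with the $n$-sum property.

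Let $x\in R$ be arbitrary. By hypothesis, $x=v_1+\cdots+v_n$ with $v_i\in U$. Since $-1\in U$, we also have $-x=\sum_i (-v_i)$, and $1\cdot(-v_i)\cdot(-1)=v_i$ lies in the same $U$-double coset as $-v_i$. Hence $v_i-(-v_i)=2v_i\in\Delta$, which yields only $2x\in\Delta$. That is not enough in characteristic not $2$, so I would instead use $v_i-1\in\Delta$, which gives $x-n\cdot 1\in\Delta$. It then remains to show $1\in\Delta$, i.e.\ $n\cdot 1\in\Delta$. For this, apply the $n$-sum property to $0$: write $0=w_1+\cdots+w_n$ with $w_i\in U$. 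Each $w_i-1\in\Delta$, so $0-n\cdot 1=\sum_i(w_i-1)\in\Delta$, hence $n\cdot1\in\Delta$. Therefore $x\in\Delta$ for all $x$, and so $\Delta=R$.

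The main obstacle is the bookkeeping of which differences are genuinely known to lie in $\Delta$. I need to check that the pairs $(u,1)$ with $u\in U$ qualify: $u=u\cdot 1\cdot 1$ with $u,1\in U$, so they do. Beyond that, the argument needs nothing more than the quoted Proposition 3.6 and the non-degeneracy of $\psi$. I also expect the write-up to clarify the typo "$r_1,r_2\in U$" in the definition of $\Delta_U$, which is harmless here since all differences actually used are of the form $u-1$ with $u\in U$.
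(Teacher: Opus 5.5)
Your proof is correct. Its overall structure matches the paper's: show $\Delta=R$, then combine \cref{prop:orthogonal} with the non-degeneracy of $\psi$. You also spell out, via translations, the reduction from PST between arbitrary $a\neq b$ to PST from $0$ to $b-a$, which the paper leaves implicit.

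The difference lies in the key step $\Delta_U=R$.
\begin{itemize}
\item The paper first upgrades the $n$-sum property to the $2m$-sum property with $m=\lceil n/2\rceil$, by adding copies of $1\in U$. It then pairs the summands as $u_{2j-1}+u_{2j}=u_{2j-1}-(-u_{2j})$, which uses $-1\in U$.
\item You instead take representations $x=\sum_{i=1}^n v_i$ and $0=\sum_{i=1}^n w_i$ with the same number of terms. Your bookkeeping with $x-n\cdot 1$ and $n\cdot 1$ amounts to $x=\sum_{i=1}^n (v_i-w_i)$.
\end{itemize}
Your route needs neither the parity adjustment nor $-1\in U$; that hypothesis is then used only to make $\Gamma(R,S)$ undirected. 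So your argument is slightly more economical in hypotheses. The paper's pairing expresses each element as a sum of only $\lceil n/2\rceil$ differences rather than $n$, but that is irrelevant for the conclusion.

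One correction: the paper's definition of $\Delta_U$ (generated by $r_1-r_2$ with $r_1,r_2\in U$) is not a typo. Since $U$ is a subgroup, any $r_1,r_2\in U$ satisfy $r_1=(r_1r_2^{-1})\,r_2\cdot 1$. Hence they lie in the same $U$-double coset and $\lambda_{r_1}=\lambda_{r_2}$, so the paper's $\Delta_U$ is already contained in $\Delta$. The differences $u-1$ you actually use lie in it.

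The exploratory remarks about $-2$, $u+1$ and $2v_i$ play no role and can be dropped.
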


\begin{proof}
We claim that $\Delta_U=R.$ First, we observe that since \(1\in U\), the \(n\)-sum property implies the \(N\)-sum property for every
\(N\geq n\). In fact, if \(a-1\) is a sum of \(N-1\) elements of \(U\), then \(a\) is
obtained by adding \(1\). In particular, there exists a positive integer $m$ such that $R$ has the $2m$-sum property with respect to $U$ (we can take $m= \lceil  n/2 \rceil.$) We claim that $\Delta_U=R$. Let \(a\in R\). Since \(R\) has the \(2m\)-sum property with respect to \(U\), we can write
\[
a=u_1+\cdots+u_{2m}
\]
with \(u_i\in U\). Since \(-1\in U\), we have
\[
u_{2j-1}+u_{2j}=u_{2j-1}-(-u_{2j}),
\]
where both \(u_{2j-1}\) and \(-u_{2j}\) belong to \(U\). Thus each pair
\(u_{2j-1}+u_{2j}\) lies in \(\Delta_U\), and hence \(a\in\Delta_U\). Therefore
\(\Delta_U=R\).
 Since $\Delta_U \subset \Delta$, this shows that $\Delta =R $ as well. Consequently, there is no PST on $\Gamma(R,S).$
\end{proof}
We have the following immediate corollary. 
\begin{cor}
    Let $R$ be a finite ring with the $2$-sum property. Let $\Gamma(R,S)$ be a gcd-graph over $R.$ Then $\Gamma(R,S)$ has no PST. 
\end{cor}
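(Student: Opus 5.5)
This corollary should follow directly from \cref{thm:pst-and-n-sum} by taking $U = R^{\times}$. Three things need to be checked: the hypotheses on $U$, the required sum property, and that the graph is of the right type.

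First, the hypotheses on $U$. The set $U = R^{\times}$ is a subgroup of $R^{\times}$ and contains $-1$, so it is admissible in \cref{thm:pst-and-n-sum}.

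Second, the sum property. By definition, $R$ having the $2$-sum property means every element of $R$ is a sum of exactly two units. This is precisely the $2$-sum property with respect to $U = R^{\times}$ in the sense of the definition in \cref{sec:normalized_units}. So the theorem applies with $n = 2$.

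Third, the graph. A gcd-graph $\Gamma(R,S)$ satisfies $R^{\times} S R^{\times} = S$, so $S$ is stable under the left and right actions of $U = R^{\times}$. Hence $\Gamma(R,S)$ is an $R^{\times}$-unitary Cayley graph.

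With these three facts, \cref{thm:pst-and-n-sum} gives that $\Gamma(R,S)$ has no PST. The mechanism is that $\Delta_{R^{\times}} = R$: every $a = u_1 + u_2 = u_1 - (-u_2)$ is a difference of units. Consequently $\Delta = R$, and non-degeneracy of $\psi$ forces $s = 0$ for any putative PST target $s$.

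I do not expect any real obstacle. The only point to watch is the standing assumption of \cref{sec:pst} that $R$ is a finite $\Z/m$-symmetric Frobenius ring, which is needed for the spectral description and for \cref{prop:orthogonal}. I would state the corollary under that convention.
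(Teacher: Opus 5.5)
Your proposal is correct and matches the paper's argument: the paper states the corollary as an immediate consequence of \cref{thm:pst-and-n-sum} with $U=R^{\times}$, and your hypothesis checks ($-1\in R^{\times}$, the $2$-sum property being the $2$-sum property with respect to $R^{\times}$, and $R^{\times}SR^{\times}=S$ making $\Gamma(R,S)$ an $R^{\times}$-unitary Cayley graph) are exactly what is needed. Your caveat about the standing $\Z/m$-symmetric Frobenius assumption also agrees with the paper's convention for that section.
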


Finally, we show that there is no PST on the graph $\Gamma(L, U_{L/F}^{\pm})$ studied in \cref{sec:normalized_units}.
\begin{prop}
    Let $L/F$ be a finite extension of finite fields such that $L \neq F$. Then, the graph $\Gamma(L, U_{L/F}^{\pm})$ has no PST. 
\end{prop}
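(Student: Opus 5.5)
The plan is to reduce the statement to \cref{thm:pst-and-n-sum}, applied with $R=L$ and $U=U_{L/F}^{\pm}$, and then to check the hypotheses of that theorem one at a time.

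First, $L$ is a finite field. It is therefore semisimple, hence a symmetric Frobenius ring, so the spectral set-up of \cref{sec:pst} applies. Concretely, one can take $m=p=\operatorname{char}(L)$ and $\psi=\operatorname{Tr}_{L/\F_p}$; this functional is symmetric and non-degenerate.

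Second, $U_{L/F}^{\pm}$ is a subgroup of $L^{\times}$. The set $U_{L/F}$ is the kernel of the norm homomorphism, so it is a subgroup, and $U_{L/F}^{\pm}=\{\pm1\}\cdot U_{L/F}$ is a product of two subgroups of an abelian group, hence again a subgroup. By construction it contains $-1$.

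Third, the graph $\Gamma(L,U_{L/F}^{\pm})$ is a $U$-unitary Cayley graph for $U=U_{L/F}^{\pm}$. Indeed, the connection set $S=U$ is stable under left and right multiplication by $U$, because $U$ is a group and $L$ is commutative.

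Fourth, the $n$-sum hypothesis comes from \cref{thm:normalized-units-fields}. That theorem gives $m(L/F)\in\{2,3,4\}$, so $L$ has the $n$-sum property with respect to $U_{L/F}^{\pm}$ for $n=m(L/F)$. Explicitly, \cref{prop:degree-2} gives the $4$-sum property when $[L:F]=2$, and \cref{lem:[L:F]=4} and \cref{thm:cubic 2-sum} give the $2$-sum property when $[L:F]\geq 3$. With all hypotheses in place, \cref{thm:pst-and-n-sum} shows that $\Delta_U=L$, and therefore the graph has no PST.

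The only step needing care is the first: confirming that the supercharacter fact $\lambda_{u_1ru_2}=\lambda_r$ (from \cite[Proposition 3.6]{nguyen2026supercharacters}) holds in this setting, which requires the Frobenius set-up. For a field this is immediate. Since $S$ is $U$-stable, we have
\[
\lambda_{ur}=\sum_{s\in S}\chi(urs)=\sum_{s'\in S}\chi(rs'),
\]
where $s'=us$ runs over $S$. So I expect no real obstacle: the substance of the argument lies entirely in the earlier $n$-sum results.
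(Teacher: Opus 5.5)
Your proposal is correct and is essentially the paper's own first argument: the paper also feeds the $n$-sum property from \cref{thm:normalized-units-fields} into \cref{thm:pst-and-n-sum}. The only difference is that the paper uses the $4$-sum property via \cref{rem:n-sum-upper}, whereas you use $n=m(L/F)$ directly, which serves equally well. The paper also gives a more direct second argument, which shows that the group generated by differences $r_1-r_2$ with $N_{L/F}(r_1)=N_{L/F}(r_2)$ is a nonzero ideal of the field $L$, hence all of $L$; that route avoids the $n$-sum results altogether.
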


\begin{proof}
    If $[L:F] \geq 2$, then $L$ has the $4$-sum property with respect to $U_{L/F}^{\pm}$ by \cref{thm:normalized-units-fields}, and \cref{rem:n-sum-upper}. Therefore, by \cref{thm:pst-and-n-sum}, $\Gamma(L, U_{L/F}^{\pm})$ has no PST.

    We will present below another argument which is somewhat more direct.  Let $\Delta_2$ be the abelian group generated by elements of the form $r_1-r_2$ where $r_1, r_2 \in L^{\times}$ and $N_{L/F}(r_1)=N_{L/F}(r_2).$ By definition, $r_1 = ur_2$ where $N(u)=1$ and hence, $u \in U_{L/F}^{\pm}.$ We conclude that $r_1$ and $r_2$ give rise to the same eigenvalue of $\Gamma(L, U_{L/F}^{\pm})$; namely $\lambda_{r_1}=\lambda_{r_2}.$ This shows that $\Delta_2 \subset \Delta.$ We claim that $\Delta_2$ is an ideal in $L.$ Clearly, $\Delta_2$ is closed under addition. Let us now show that $a\Delta_2 \subset \Delta_2$ for each $a \in L^{\times}$. In fact, we have $a(r_1-r_2) =(ar_1)-(ar_2).$ Since $N(ar_1)=N(ar_2)$, we conclude that $a(r_1-r_2) \in \Delta_2$ and hence $a \Delta_2 \subset \Delta_2.$ This shows that $\Delta_2$ is an ideal in $L$. However, since $L$ is a field, we must have $\Delta_2=L$ or $\Delta_2=0$. Because \(L/F\) is nontrivial,  \(U_{L/F}\) has more than one element and  therefore \(\Delta_2\neq 0\). We conclude that $\Delta=\Delta_2=L.$ Consequently, there is no PST on $\Gamma(L, U_{L/F}^{\pm}).$    
\end{proof}

\section*{Acknowledgements}
We thank Sunil Chebolu for alerting us to the work \cite{invertible_matrices}, which initiated our study in this line of research. We thank Professor David Anderson for his insightful correspondence on his work on total graphs.  We acknowledge that we consulted OpenAI for a critique of  our exposition, for checking our arguments and in a few cases for pointing out some new slick technical ideas. We take full responsibility for the content and accuracy.


\begin{thebibliography}{10}

\bibitem{unitary}
R.~Akhtar, M.~Boggess, T.~Jackson-Henderson, I.~Jim{\'e}nez, R.~Karpman, A.~Kinzel, and D.~Pritikin, \emph{On the unitary {Cayley} graph of a finite ring}, Electron. J. Combin. \textbf{16} (2009), no.~1, Research Paper 117, 13 pages.

\bibitem{Alluqmani2026}
Eman Alluqmani, \emph{Unit graphs of group rings}, AIMS Mathematics \textbf{11} (2026), no.~1, 2907--2934.

\bibitem{anderson2008total}
David~F Anderson and Ayman Badawi, \emph{The total graph of a commutative ring}, Journal of algebra \textbf{320} (2008), no.~7, 2706--2719.

\bibitem{Ashrafi2010}
N.~Ashrafi, H.~R. Maimani, M.~R. Pournaki, and S.~Yassemi, \emph{Unit graphs associated with rings}, Comm. Algebra \textbf{38} (2010), no.~8, 2851--2871. \MR{2730284}

\bibitem{bavsic2009perfect}
M.~Ba{\v{s}}i{\'c}, M.~D. Petkovi{\'c}, and D.~Stevanovi{\'c}, \emph{Perfect state transfer in integral circulant graphs}, Applied Mathematics Letters \textbf{22} (2009), no.~7, 1117--1121.

\bibitem{bergelson2021sums}
Vitaly Bergelson, Andrew Best, and Alex Iosevich, \emph{Sums of powers in large finite fields: a mix of methods}, The American Mathematical Monthly \textbf{128} (2021), no.~8, 701--718.

\bibitem{cheung2011perfect}
Wang-Chi Cheung and Chris Godsil, \emph{Perfect state transfer in cubelike graphs}, Linear Algebra and Its Applications \textbf{435} (2011), no.~10, 2468--2474.

\bibitem{christandl2004perfect}
Matthias Christandl, Nilanjana Datta, Artur Ekert, and Andrew~J Landahl, \emph{Perfect state transfer in quantum spin networks}, Physical review letters \textbf{92} (2004), no.~18, 187902.

\bibitem{chudnovsky2024prime}
Maria Chudnovsky, Michal Cizek, Logan Crew, J{\'a}n Min{\'a}{\v{c}}, Tung~T. Nguyen, Sophie Spirkl, and Nguyen Duy~T\^an, \emph{On prime {C}ayley graphs}, J. Comb. \textbf{17} (2026), no.~2, 223--252. \MR{5030071}

\bibitem{Ehrlich1968}
G.~Ehrlich, \emph{Unit-regular rings}, Portugaliae Mathematica \textbf{27} (1968), 209--212.

\bibitem{godsil2012state}
Chris Godsil, \emph{State transfer on graphs}, Discrete Mathematics \textbf{312} (2012), no.~1, 129--147.

\bibitem{godsil2025integral}
Chris Godsil and Pablo Spiga, \emph{Integral normal {Cayley} graphs}, Journal of Algebraic Combinatorics \textbf{62} (2025), no.~1, 20.

\bibitem{goldsmith1998unit}
Brendan Goldsmith, Simone Pabst, and Audrey Scott, \emph{On unit sum number of rings and modules},  (1998).

\bibitem{MR1041619}
Ralph~P. Grimaldi, \emph{Graphs from rings}, Proceedings of the {T}wentieth {S}outheastern {C}onference on {C}ombinatorics, {G}raph {T}heory, and {C}omputing ({B}oca {R}aton, {FL}, 1989), vol.~71, 1990, pp.~95--103. \MR{1041619}

\bibitem{honold2001characterization}
Thomas Honold, \emph{Characterization of finite {Frobenius} rings}, Archiv der Mathematik \textbf{76} (2001), no.~6, 406--415.

\bibitem{joly1973equations}
Jean-Ren{\'e} Joly, \emph{Equations et varietes algebriques sur un corps fini}, L'Enseignement math{\'e}matique \textbf{19} (1973), 1--117.

\bibitem{kiani2011energy}
Dariush Kiani, Mohsen Molla~Haji Aghaei, Yotsanan Meemark, and Borworn Suntornpoch, \emph{Energy of unitary {Cayley} graphs and gcd-graphs}, Linear algebra and its applications \textbf{435} (2011), no.~6, 1336--1343.

\bibitem{klotz2007some}
Walter Klotz and Torsten Sander, \emph{Some properties of unitary {{Cayley}} graphs}, The Electronic Journal of Combinatorics \textbf{14} (2007), no.~1, R45, 12 pages.

\bibitem{lamprecht1953allgemeine}
Erich Lamprecht, \emph{Allgemeine theorie der {Gau{\ss}schen} {Summen} in endlichen kommutativen {Ringen}}, Mathematische Nachrichten \textbf{9} (1953), no.~3, 149--196.

\bibitem{Lang2002} Serge Lang, {\it Algebra}. Revised third edition. Graduate Texts in Mathematics, 211. Springer-Verlag, New York, 2002. 
\bibitem{invertible_matrices}
N.~J. Lord, \emph{Matrices as sums of invertible matrices}, Mathematics Magazine (1987).


\bibitem{LN1997} Rudolf Lidl, and Harald Niederreiter, {\it Finite fields}, with a foreword by P. M. Cohn, second edition, Encyclopedia of Mathematics and its Applications, 20. Cambridge University Press, Cambridge, 1997.

\bibitem{maimani2010rings}
HR~Maimani, MR~Pournaki, and S~Yassemi, \emph{Rings which are generated by their units: a graph theoretical approach}, Elemente der Mathematik \textbf{65} (2010), no.~1, 17--25.

\bibitem{GR}
C{\'e}sar~Polcino Milies and Sudarshan~K Sehgal, \emph{An introduction to group rings}, vol.~1, Springer Science \& Business Media, 2002.

\bibitem{minavc2024gcd}
J{\'a}n Min{\'a}{\v{c}}, Tung~T Nguyen, and Nguyen~Duy T{\^a}n, \emph{On the gcd-graphs over polynomial rings}, Canadian Journal of Mathematics (2024), 1--28.

\bibitem{perfect-unitary}
J{\'a}n Min{\'a}{\v{c}}, Tung~T. Nguyen, and Nguyen~Duy T{\^a}n, \emph{A complete classification of perfect unitary {Cayley} graphs}, Galois Journal of Algebra \textbf{2} (2026), no.~1, 50--58.

\bibitem{nagell1970type}
Trygve Nagell, \emph{Sur un type particulier d’unit{\'e}s alg{\'e}briques}, Arkiv f{\"o}r Matematik \textbf{8} (1970), no.~2, 163--184.

\bibitem{nguyen2024certain}
Tung~T. Nguyen and Nguyen~Duy T{\^a}n, \emph{On certain properties of the $ p $-unitary {Cayley} graph over a finite ring}, To appear in the Proceedings of the Fields Institute ``Workshop on Galois Cohomology and Massey Products : A conference in honour of J{\'a}n Min{\'a}{\v{c}}’s 71st birthday", arXiv:2403.05635 (2024).

\bibitem{nguyen2024integral}
\bysame, \emph{Integral {Cayley} graphs over a finite symmetric algebra}, Archiv der Mathematik \textbf{124} (2025), 615--623.

\bibitem{nguyen2026supercharacters}
\bysame, \emph{On ${U}$-unitary {Cayley} graphs over finite rings}, arXiv preprint arXiv 2603.21239 (2026).

\bibitem{nguyen2025perfect}
\bysame, \emph{Perfect state transfer on gcd-graphs over a finite {Frobenius} ring}, To appear in International Journal of Algebra and Computation (2026).

\bibitem{nguyen2025supercharacters}
\bysame, \emph{Supercharacters of finite abelian groups and applications to spectra of $U$-unitary {Cayley} graphs}, arXiv preprint arXiv:2508.10348 (2025).

\bibitem{nguyengcd2026}
\bysame, \emph{On gcd-graphs over finite commutative rings}, To appear in Journal of Algebra and Its Applications (2026).

\bibitem{passman1971infinite}
D.S. Passman, \emph{Infinite group rings}, Lecture notes in pure and applied mathematics, M. Dekker, 1971.

\bibitem{podesta2021_finitefield}
Ricardo~A. Podest{\'a} and Denis~E. Videla, \emph{The {Waring}’s problem over finite fields through generalized {Paley} graphs}, Discrete Mathematics \textbf{344} (2021), no.~5, 112324.

\bibitem{podesta2025GP}
\bysame, \emph{Connected components and non-bipartiteness of generalized {P}aley graphs}, Annals of Combinatorics (2025), 1--25.

\bibitem{podesta_new}
\bysame \emph{On $ k $-th unitary Cayley graphs over finite commutative rings: structure and decompositions.} arXiv preprint arXiv:2606.06774 (2026).

\bibitem{Raphael1974}
R.~Raphael, \emph{Rings which are generated by their units}, J. Algebra \textbf{28} (1974), 199--205.

\bibitem{saxena2007parameters}
Nitin Saxena, Simone Severini, and Igor~E Shparlinski, \emph{Parameters of integral circulant graphs and periodic quantum dynamics}, International Journal of Quantum Information \textbf{5} (2007), no.~03, 417--430.

\bibitem{shekarriz2012total}
Mohammad~Hadi Shekarriz, MH~Shirdareh~Haghighi, and Habib Sharif, \emph{On the total graph of a finite commutative ring}, Communications in algebra \textbf{40} (2012), no.~8, 2798--2807.

\bibitem{Skornyakov1964}
L.~A. Skornyakov, \emph{Complemented modular lattices and regular rings}, Oliver \& Boyd, Edinburgh--London, 1964.

\bibitem{so2006integral}
Wasin So, \emph{Integral circulant graphs}, Discrete Mathematics \textbf{306} (2006), no.~1, 153--158.

\bibitem{Huadong}
Huadong Su and Gaohua Tang, \emph{When unit graphs are isomorphic to unitary {C}ayley graphs of rings?}, J. Algebra Appl. \textbf{24} (2025), no.~11, Paper No. 2550267, 9. \MR{4909024}

\bibitem{suntornpoch2016cayley}
Borworn Suntornpoch and Yotsanan Meemark, \emph{{Cayley} graphs over a finite chain ring and gcd-graphs}, Bulletin of the Australian Mathematical Society \textbf{93} (2016), no.~3, 353--363.

\bibitem{pst-gcd-new}
Issaraporn Thongsomnuk and Yotsanan Meemark, \emph{Perfect state transfer in unitary {C}ayley graphs and gcd-graphs}, Linear Multilinear Algebra \textbf{67} (2019), no.~1, 39--50. \MR{3885879}

\bibitem{zelinsky1954every}
Daniel Zelinsky, \emph{Every linear transformation is a sum of nonsingular ones}, Proceedings of the American Mathematical Society \textbf{5} (1954), no.~4, 627--630.

\end{thebibliography}
\end{document}